\titleformat{\section}{\large\bfseries\filcenter}{\thesection}{1em}{}
\titleformat{\subsection}{\bfseries}{\thesubsection}{1em}{}
\newtheorem{theorem}{Theorem}[section]
\newtheorem{corollary}[theorem]{Corollary}
\newtheorem{lemma}[theorem]{Lemma}
\newtheorem{proposition}[theorem]{Proposition}
\theoremstyle{remark}
\theoremstyle{definition}
\newtheorem{remark}[theorem]{Remark}
\newtheorem{definition}[theorem]{Definition}
\newtheorem{notation}[theorem]{Notation}
\newtheorem{setup}[theorem]{Setup}
\numberwithin{equation}{section}
\renewcommand\thanks[1]{%
  \begingroup
  \renewcommand\thefootnote{}\footnote{#1}%
  \addtocounter{footnote}{-1}%
  \endgroup
}
\renewcommand{\tilde}{\widetilde}
\renewcommand{\epsilon}{{\varepsilon}}
\newcommand{\resp}{\textit{resp. }}
\newcommand{\ie}{\textit{i.e. }}
\newcommand{\cf}{\textit{cf. }}
\newcommand{\KK}{{\mathbb{K}}}
\newcommand{\RR}{{\mathbb{R}}}
\newcommand{\CC}{{\mathbb{C}}}
\newcommand{\n}{{\mathtt{n}}}
\def\bkappa{\kappa^\varrho}
\newcommand{\Dir}{{\mathsf{D}}}
\newcommand{\G}{\mathsf{G}}
\newcommand{\Id}{{\operatorname{Id}}}
\newcommand{\sol}{\mathsf{Sol}\,}
\newcommand{\tr}{\mathrm{tr}}
\newcommand{\E}{\mathsf{E}}
\newcommand{\I}{\mathsf{I}}
\newcommand{\M}{\mathsf{M}}
\newcommand{\N}{\mathsf{N}}
\renewcommand{\P}{\mathsf{P}}
\newcommand{\Q}{\mathsf{Q}}
\newcommand{\R}{\mathsf{R}}
\renewcommand{\S}{\mathsf{S}}
\newcommand{\T}{\mathsf{T}}
\newcommand{\U}{\mathsf{U}}
\newcommand{\V}{\mathsf{V}}
\newcommand{\Z}{\mathsf{Z}}
\newcommand{\oS}{\mathsf{S}}
\newcommand{\ff}{{\mathfrak{f}}}
\newcommand{\fh}{{\mathfrak{h}}}
\newcommand{\fR}{\mathfrak{R}}
\newcommand{\fA}{{\mathfrak{A}}}
\newcommand{\fI}{{\mathfrak{I}}}
\renewcommand{\aa}{{\mathfrak{a}}}
\newcommand{\vol}{{\textnormal{vol}\,}}
\newcommand{\supp}{{\textnormal{supp\,}}}
\newcommand{\Spin}{\textnormal{Spin}}
\newcommand{\bracket}[2]{\langle  #1\,|\, #2  \rangle}
\newcommand{\fiber}[2]{\prec  #1\,|\, #2  \succ}
\newcommand{\scalar}[2]{(#1\, |\, #2)}
\begin{document}

\begin{flushright}

\baselineskip=4pt

\end{flushright}

\begin{center}
\vspace{5mm}

{\Large\bf INTERTWINING OPERATORS FOR\\[3MM] SYMMETRIC HYPERBOLIC SYSTEMS ON\\[4MM] GLOBALLY HYPERBOLIC MANIFOLDS}

\thanks{
 S.M. and D.V. acknowledge the support of the INFN-TIFPA project ``Bell''.}

\vspace{5mm}

{\bf by}

\vspace{5mm}

{  \bf  Simone Murro and Daniele Volpe}\\[1mm]
\noindent  {\it Dipartimento di Matematica, Universit\`a di Trento and INFN-TIFPA}\\
{\it Via Sommarive 14,} {\it I-38123 Povo, Italy}\\[1mm]
email: \ {\tt  simone.murro@unitn.it, daniele.volpe@unitn.it}
\\[5mm]

Correspondence to be sent to: simone.murro@gmail.com
\\[5mm]
\end{center}

\begin{abstract}
In this paper, a geometric process to compare solutions of symmetric hyperbolic systems on (possibly different) globally hyperbolic manifolds is realized via a family of intertwining operators. By fixing a suitable parameter, it is shown that the resulting intertwining operator preserves Hermitian forms naturally defined on the space of homogeneous solutions. As an application, we investigate the action of the intertwining operators in the context of algebraic quantum field theory. In particular, we provide a new geometric proof for the existence of the so-called Hadamard states on globally hyperbolic manifolds. 
\end{abstract}

\paragraph*{Keywords: } Symmetric hyperbolic systems, Dirac operators, wave equations, Cauchy problem,  Green operators, intertwining operators, algebraic quantum field theory.
\paragraph*{MSC 2010: } Primary 53C50, 58J45;  Secondary 53C27, 81T05. 
\\[0.5mm]

\renewcommand{\thefootnote}{\arabic{footnote}}
\setcounter{footnote}{0}

\section{Introduction}

Symmetric hyperbolic systems are an important class of first-order linear differential operators acting on sections of vector bundles on Lorentzian manifolds. The most prominent examples are the classical Dirac operator and the geometric wave operator, which can be understood by reducing a suitable second-order normally hyperbolic differential operator to a first-order differential operator. In the class of  Lorentzian manifolds with empty boundary known as \emph{globally hyperbolic}, the Cauchy problem of a symmetric hyperbolic system is well-posed. As a consequence the existence of advanced and retarded Green operators in guaranteed. These operators are of essential importance in the quantization of a classical field theory: Indeed they implement the canonical commutation relation for a bosonic field theory or the canonical anti-commutation relation for fermionic field theory. Moreover, their difference, dubbed causal propagator (or Pauli-Jordan commutator), can be used to construct quantum states. For further details, we recommend the recent reviews~\cite{CQF1,BD,FK}. 

In this paper, we investigate the existence of a geometrical map connecting the space of solutions of different symmetric hyperbolic systems over (possibily different) globally hyperbolic manifolds. 
A summary of the main result
obtained is the following (\cf Theorem~\ref{thm:main}):
\begin{theorem}\label{thm:mainintr}
Let $\alpha\in\{0,1\}$ and $\M_\alpha=(\M,g_\alpha)$ be globally hyperbolic manifolds admitting the same Cauchy temporal function. Consider the symmetric hyperbolic systems $\oS_\alpha$ over $\M_\alpha$ acting on sections of a real (or complex) vector bundle $\E_\alpha$ endowed with a non-degenerate sesquilinear fiber metric.  If there exists a vector bundle isometry $\kappa: \E_0\to\E_1$ and the set of timelike vectors for $g_1$ is contained in the one for $g_0$, then the spaces of inhomogeneous solutions for $\oS_0$ and $\oS_1$ are isomorphic. 
\end{theorem}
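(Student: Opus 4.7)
The plan is to exploit well-posedness of the Cauchy problem for symmetric hyperbolic systems and to transport solutions through the fiber isometry $\kappa$. First I would invoke the common Cauchy temporal function $t:\M\to\RR$ to foliate $\M$ by the level sets $\Sigma_\tau:=t^{-1}(\tau)$. The hypothesis that the $g_1$-timelike cone is contained in the $g_0$-timelike cone implies in particular that every vector tangent to $\Sigma_\tau$ is $g_\alpha$-spacelike for both values of $\alpha$, so, together with global hyperbolicity, each level set is a smooth Cauchy hypersurface for both metrics simultaneously. Fix once and for all such a reference hypersurface $\Sigma:=\Sigma_{\tau_0}$.

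Next, I would use the standard fact that a symmetric hyperbolic system $\oS_\alpha$ has a well-posed Cauchy problem on the globally hyperbolic manifold $\M_\alpha$: for every smooth source $f$ and every smooth initial datum $u$ on $\Sigma$ there is a unique smooth $\psi$ solving $\oS_\alpha\psi=f$ with $\psi|_\Sigma=u$, and the support of $\psi$ is controlled by the $g_\alpha$-causal hull of the supports of $f$ and $u$. Consequently, the inhomogeneous solution space of $\oS_\alpha$ is parametrized by pairs $(u,f)$. Using $\kappa$ fiberwise on $\Sigma$ and globally on $\M$ to identify the two sets of such pairs, the intertwining operator $T_\kappa$ is defined by mapping an inhomogeneous solution $\psi_0$ of $\oS_0$ (with source $f_0=\oS_0\psi_0$) to the unique solution $\psi_1$ of $\oS_1\psi_1=\kappa(f_0)$ satisfying $\psi_1|_\Sigma=\kappa(\psi_0|_\Sigma)$. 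Swapping the roles of the two systems and replacing $\kappa$ by $\kappa^{-1}$ produces a two-sided inverse, making $T_\kappa$ a linear isomorphism.

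The step I expect to be the main obstacle is verifying that $T_\kappa$ sends the correct subspaces into one another, namely that the support/regularity class implicit in the phrase ``space of inhomogeneous solutions'' (typically: smooth sections with spatially compact restriction to each $\Sigma_\tau$, and compactly supported source) is preserved by the $\oS_1$-evolution. This is precisely where the cone-inclusion hypothesis is indispensable: since every $g_1$-causal curve is also $g_0$-causal, the $g_1$-causal future and past of any compact set sit inside their $g_0$-counterparts, so finite propagation speed for $\oS_1$ guarantees that the push-forward of a spatially compact solution of $\oS_0$ is again spatially compact, and symmetrically for $\kappa^{-1}$. Once this support control is established, the linearity and bijectivity of $T_\kappa$ are direct consequences of the uniqueness and existence parts of the Cauchy problems for $\oS_0$ and $\oS_1$.
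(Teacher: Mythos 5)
Your argument is correct and does yield the stated isomorphism, but it follows a genuinely different route from the paper's. The paper does not transport Cauchy data directly: it builds a M\o ller-type operator $\R_\varrho=\R_-\circ\R_+\circ\bkappa_{1,0}$, where $\R_\pm=\Id-\G^\mp\,\cdot\,(\oS_1-\oS_{0,1}^\varrho)$ are Green-operator corrections attached to an interpolated operator $\oS_{\chi,1}^\varrho=(1-\chi)\oS_{0,1}^\varrho+\chi\oS_1$ that agrees with the transported $\oS_0$ in the past of $\Sigma_-$ and with $\oS_1$ in the future of $\Sigma_+$ (Lemma~\ref{lem:Schi} and Theorem~\ref{thm:main}); the isomorphism is then verified algebraically from the defining properties of Green operators. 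Your direct approach --- parametrize $\sol(\oS_\alpha)$ by $(\Psi|_\Sigma,\oS_\alpha\Psi)$, push this data through $\kappa$, and invoke well-posedness on each side --- is shorter, and proves the existence of an isomorphism without actually using the cone inclusion $g_1\le g_0$. Your two claims that this hypothesis is ``indispensable'' are misattributions: the level sets are Cauchy for \emph{both} metrics simply because they are hypothesized to share a Cauchy temporal function, and if $\psi_0$ has spatially compact support with compactly supported source then so does $T_\kappa\psi_0$ by finite $g_1$-propagation speed alone, with no need to compare $J_1^\pm$ to $J_0^\pm$. The cone inclusion is genuinely indispensable in the paper's argument, where it ensures that $\oS_{0,1}^\varrho$ satisfies property (H) with respect to $g_1$-timelike covectors so that $\oS_{\chi,1}^\varrho$ is symmetric hyperbolic over $\M_1$; without it the construction of $\G_\chi^\pm$ collapses. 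The trade-off is that the paper's explicit Green-operator form of $\R_\varrho$, together with the interpolation parameter $\chi$ and the scaling $\varrho$, is exactly what powers the later results (Propositions~\ref{prop:conserv scal prod} and~\ref{prop:conserv sympl form}, Theorems~\ref{thm:alg iso}--\ref{thm:main appl}) on preservation of Hermitian structures and Hadamard states --- your $T_\kappa$ would not give direct access to those.
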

\begin{remark}
As we shall see in more details in Theorem~\ref{thm:main}, actually there exists a one-parameter family of isomorphisms $\R^\varrho:\sol(\oS_0)\to \sol(\oS_1)$, where $\varrho\in C^\infty(\M,\RR)$ is a strictly positive smooth function which plays a fundamental role in the conservation of Hermitian forms naturally defined on the space of spacially compact solutions (\cf Section~\ref{sec:cons herm str}). Unfortunately, none of these isomorphisms is canonical, in the sense that they depends on the choice of a smooth function $\chi\in C^\infty(\M,[0,1])$.
\end{remark}
Let us briefly comment on the geometric setting. 
First of all, the globally hyperbolic manifolds and the vector bundles can coincide, as for the case of scalar wave equations propagating on the same manifold which differs by a smooth potential. In this case, our analysis incorporates the results of Dappiaggi and Drago in ~\cite{Moller}. If the manifolds do not coincides but the vector bundles do, then a relation between the causal cones for the different metrics is need it in order to defined a suitable `intertwining' symmetric hyperbolic systems (\cf Lemma~\ref{lem:Schi}). Finally, if also the vector bundles do not coincides, as for the case of spinor bundles, then an isometry is need it in order to compare the different symmetric hyperbolic systems. Let us recall that the existence of a spinor bundle isometry in the Lorentzian setting is guaranteed by the result of B\"ar, Gauduchon and Moroianu~\cite{Baer}.  As a by-product of our analysis, we expect that the \emph{intertwining operator} defined in Proposition~\ref{prop:conserv scal prod} can be used to probe spectral properties of the Riemannian Dirac operator as follows:
Consider two Riemannian Dirac operators $\Dir_\alpha$ acting on sections of spinors bundles $\S\Sigma_\alpha$ over a compact Riemannian manifold $(\Sigma,h_\alpha)$. By defining $(\M_\alpha=\RR\times \Sigma_\alpha, g_\alpha= - dt^2 + h_\alpha)$ we immediately obtain a globally hyperbolic manifold. On $\M_\alpha$ consider the following symmetric hyperbolic system
$$ \oS_\alpha= \partial_t - \imath \Dir_\alpha : \E_\alpha \to \E_\alpha \,,$$
where $\E_\alpha$ is also considered as a Hermitian vector bundle on $\M_\alpha$ via the pull-back along the
projection $\pi : \M_\alpha \to \Sigma_\alpha$. Since $\Dir_\alpha$ is essentially self-adjoint on $L^2(\S\Sigma_\alpha)$, any vector in the kernel of $\oS_\alpha$ can be written employing the spectral calculus, namely
$$\Psi_\alpha= \exp(-\imath\, t\, \Dir_\alpha) \ff_\alpha\,, =  \Big( \int_{\sigma(\Dir_\alpha)}e^{-\imath\, \omega\, t}   \, dE_\omega \Big) \ff_\alpha$$
where $\sigma( \Dir_\alpha)$ and $dE_\omega$ are respectively the spectral measure and the spectrum of $\Dir_\alpha$ and $\ff_\alpha$ are initial data. 
It follows that intertwining operators should interplay between the spectral measure of $\Dir_0$ and $\Dir_1$ respectively. Very recently, Capoferri and Vassiliev gave an explicit formula for the Dirac evolution operator on any $3$-dimensional oriented close Riemannian manifold in~\cite{MatteDirac}. Combining their results with ours, we expect to get a better understanding of eigenvalues of the Dirac operators. \medskip

Our main result has a deep implication in free quantum field theory over generic globally hyperbolic spacetimes. Indeed,
by denoting with $\fA^{CCR}_\alpha$ (\resp $\fA^{CAR}_\alpha$) the algebra of real scalar fields (\resp Dirac fields) over a globally hyperbolic spacetimes $\M_\alpha$, the isomorphism $\R_\varrho$ defined in Proposition~\ref{prop:conserv scal prod} (\resp Proposition~\ref{prop:conserv sympl form}) can be lift to a $*$-isomorphism $\fA^{CCR}_0 \simeq \fA^{CCR}_1$ (\resp $\fA^{CAR}_0 \simeq \fA^{CAR}_1$) --  \cf Theorem~\ref{thm:alg iso} (\resp Theorem~\ref{thm:alg iso2}).
Remarkably, the pullback of a quasifree state along this $*$-isomorphism preserves the singular structure of the two-point distribution associated to the state (\cf Theorem~\ref{thm:main appl}). This result  is used to provide a new geometrical proof of the existence of the so-called Hadamard states (\cf Corollary~\ref{cor:existence Hadamard}).\medskip

This paper is structured as follows. In Section~\ref{sec:symm hyp syst} we review well know-facts about symmetric hyperbolic systems. In particular in the Subsection~\ref{sec:spin geom} and~\ref{sec:GWO} we introduce respectively the (classical) Dirac operator and the geometric wave operator as main examples of symmetric hyperbolic systems. In Section~\ref{sec:main} we prove the main result of this paper and we investigate the conservation of Hermitian forms. Finally, Section~\ref{sec:applications} is devoted to analyzing the consequence of such isomorphism in the context of algebraic quantum field theory.

\subsection*{Notation and convention}
\begin{itemize}
\item[-] The symbol $\KK$ denotes on of the elements of the set $\{\RR,\CC\}$;
\item[-] $\M_g:=(\M,g)$ is a globally hyperbolic manifold  (with empty boundary) -- \cf Definition~\ref{def:globally hyperbolic} -- and $g$ has the signature $(-,+\dots,+)$;
\item[-]  $\mathcal{GH}_\M$ denotes the space of globally hyperbolic metrics on a smooth manifold $\M$ such that, for any $g_0 ,g_1\in \mathcal{GH}_\M$, $\M_0=(\M,g_0)$ and $\M_1=(\M,g_1)$ have the same Cauchy temporal function;
\item[-] $\E$ is a $\KK$-vector bundle over $\M_g$ with $N$-dimensional fibers, denoted by $\E_p$ for $p\in\M$, and endowed with a  nondegenerate sesquilinear fiber metric $\fiber{\cdot}{\cdot}:\Gamma(\E_p)\times\Gamma(\E_p)\to \KK$;
\item[-]  $\Gamma_{c}(\E), \Gamma_{pc}(\E), \Gamma_{fc}(\E), \Gamma_{tc}(\E),  \Gamma_{sc}(\E)$ resp. $\Gamma(\E)$  denote the spaces of compactly supported, past compactly supported, future compactly supported, timelike compactly supported, spacelike compactly supported resp. smooth sections of $\E$;
\item[-] $\oS:\Gamma(\E)\to\Gamma(\E)$ is a symmetric hyperbolic system -- \cf Definition~\ref{def:symm syst} -- and $\oS^\dagger$, $\oS^*$ are respectively the formal adjoint and the formal dual operator of $\oS$ -- \cf Remark~\ref{rmk:dual operator};
\item[-] when $\M$ is spin, $\S\M_g$ denotes the spinor bundle -- \cf Definition~\ref{def:spinor bundle} -- and $\Dir:\Gamma(\S\M_g)\to\Gamma(\S\M_g)$ denotes the classical Dirac operator -- \cf Definition~\ref{def:Dirac operator};
\item[-] for a fixed, but arbitrary $\ff\in\Gamma(\E)$, $\sol(\oS)$ denotes the space of inhomogeneous solutions 
$$\sol(\oS)=\{ \Psi\in\Gamma(\E) \;| \;  \oS\Psi=\ff \text{ with } \ff\in\Gamma(\E) \} \,,$$ 
while $\sol_{sc}(\oS)$ denotes the space of homogeneous solutions with spacially compact support
$$\sol_{sc}(\oS)=\{ \Psi\in\Gamma_{sc}(\E) \;| \; \Psi \in \ker \oS \} \,.$$ 
\end{itemize} 

\subsection*{Aknowledgements}

We would like to thank Claudio Dappiaggi, Nicol\`o Drago and Valter Moretti for helpful discussions related  to the topic of this paper. We are grateful to the referees for useful comments on the manuscript.

\section{Symmetric hyperbolic systems}\label{sec:symm hyp syst}

To goal of this section is to present a self-contained overview of symmetric hyperbolic systems and their properties on Lorentzian manifolds. For a detailed introduction, we recommend the lecture notes of B\"ar~\cite{Ba-lect}.\medskip

On a generic Lorentzian manifold, the Cauchy problem for a differential operator is in general ill-posed: This can be a consequence of the presence of closed timelike curves or the presence of naked singularities. Therefore, it is convenient to restrict ourself to the class of  \emph{globally hyperbolic manifolds}.
\begin{definition}
\label{def:globally hyperbolic}
A \emph{globally hyperbolic manifold} is a $(n + 1)$-dimensional, oriented,
time-oriented, smooth Lorentzian manifold $(\M,g)$ such that
\begin{itemize}
\item[(i)] There are no closed causal curves;
\item[(ii)] For every point $p,q\in\M$, $J^+(p)\cap J^-(q)$ is compact;
\end{itemize}
where $J^+(\U)$ (\resp $J^-(\U)$) denotes the set of points of $\M$ that can be reached by future (\textit{resp}. past) directed causal curves starting from $\U\subset \M$.
\end{definition}
\begin{notation}
For the rest of this section, $\M_g:=(\M,g)$ will always denote a globally hyperbolic manifold and we adopt the convention that the metric $g$ has signature $(-,+, \dots , +)$.
\end{notation}

The class of globally hyperbolic manifolds contains many important spacetimes, e.g. Minkowski spacetime, Friedmann-Robertson-Walker models, the Schwarzschild blackhole and de Sitter space. 
In \cite{Geroch}, Geroch established the equivalence for a Lorentzian manifold being global hyperbolic and the existence of a \emph{Cauchy hypersurface} $\Sigma$ (i.e. an achronal subset which is crossed exactly once by any inextendible timelike curve), which implies that
$\M$ is homeomorphic to $\R \times \Sigma$ and all Cauchy hypersurfaces are homeomorphic. The proof was carried out by finding a {Cauchy time function}, namely a continuous function  $t: \M \to \R$ which increases
strictly on any future-directed causal curve such that each level $t^{-1}(t_0)$,
$t_0 \in \R$, is a Cauchy hypersurface. In~\cite{BeSa} Bernal and S\'anchez ``smoothened'' the result of Geroch by introducing the notion of \emph{Cauchy temporal function}.
\begin{definition}
We say that a smooth time function  $t : \M \to \R$ is a \emph{Cauchy temporal function}  if
its gradient $\nabla t$ is past-directed timelike and its level set is a smooth Cauchy hypersurface.
\end{definition} 

\begin{theorem}[\cite{BeSa}, Theorem 1.1 and Theorem 1.2]\label{thm: Sanchez}
Any globally hyperbolic manifold admits a Cauchy temporal function. In particular, it is isometric to the smooth product manifold $\RR \times \Sigma$ with metric 
	$$ g= - \beta^2 d t^2 + h_t $$
	where $t:\RR\times \Sigma \to \RR$ is Cauchy temporal function,	 $\beta : 	\RR \times \Sigma \to \RR$ is a smooth positive function and $h_t$ is a Riemannian metric on each level set of $t$.
\end{theorem}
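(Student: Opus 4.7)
The plan is to adapt Geroch's classical construction of a continuous Cauchy time function and then smoothen it while preserving the ``steep'' property that its gradient is past-directed timelike. First, I would invoke Geroch's result that a globally hyperbolic manifold admits a continuous Cauchy time function $\tau: \M \to \RR$, built from volume-type functionals of the form $\tau(p) = \log(\vol(J^-(p))/\vol(J^+(p)))$ against a suitable finite Borel measure on $\M$. This already provides the topological splitting $\M \cong \RR \times \Sigma$ and shows that each level set is a topological Cauchy hypersurface, but $\tau$ is in general only Lipschitz.

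The technical heart of the proof is the smoothening. I would cover $\M$ by a locally finite family of relatively compact, causally convex open sets $\{U_\alpha\}$ on each of which the metric is controlled by a flat model, and construct on each $U_\alpha$ a smooth local temporal function $t_\alpha$ whose gradient is past-directed timelike with $g(\nabla t_\alpha, \nabla t_\alpha) \leq -N_\alpha$ for a prescribed (large) constant $N_\alpha$. The global function is then obtained as $t = \sum_\alpha \chi_\alpha t_\alpha$ for a subordinate partition of unity, together with a suitable modification by $\tau$ to guarantee that level sets are \emph{Cauchy}. The delicate point is that a convex combination of functions with past-directed timelike gradients need not again have a timelike gradient, because the derivatives of $\chi_\alpha$ introduce spacelike error terms; the remedy is to choose each $N_\alpha$ so large that the timelike contribution dominates the spacelike error on the overlaps, exploiting the openness of the causal cone.

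Once a Cauchy temporal function $t$ with past-directed timelike gradient is in hand, the product structure follows from an ODE argument. Setting $\Sigma := t^{-1}(0)$ and $X := -\nabla t / g(\nabla t, \nabla t)$ yields a future-directed timelike vector field with $X(t) = 1$. Since the level sets are Cauchy hypersurfaces, every maximal integral curve of $X$ is inextendible timelike and therefore meets each $t^{-1}(s)$ exactly once, so the flow $\phi_s$ of $X$ is complete and
\[
\Phi : \RR \times \Sigma \longrightarrow \M, \qquad (s,q) \longmapsto \phi_s(q)
\]
is a diffeomorphism. Pulling back $g$ along $\Phi$ and decomposing vectors along $\partial_s = X$ and along $T\Sigma_s$, the orthogonality $X \perp T\Sigma_s$ (which follows since $\nabla t$ is normal to the level sets) forces the off-diagonal terms to vanish, giving $\Phi^* g = -\beta^2 ds^2 + h_s$ with $\beta^2 = -1/g(\nabla t, \nabla t) > 0$ smooth and $h_s$ the induced Riemannian metric on $\Sigma_s$.

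The main obstacle is unquestionably the smoothening step: naively mollifying $\tau$ in charts fails because no canonical convolution is available on $\M$, and because smoothing can destroy the timelike character of the gradient near points where $\tau$ is non-differentiable. The key insight of Bernal and Sánchez is to work with \emph{steep} temporal functions, carrying a quantitative lower bound on $-g(\nabla t, \nabla t)$, so that the spacelike error from the partition-of-unity gluing is absorbed; verifying that the resulting smooth $t$ still has Cauchy level sets (not merely achronal ones) is the second subtle point, resolved by combining $t$ with an auxiliary function built from $\tau$.
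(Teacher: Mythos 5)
The paper does not supply its own proof of this statement: Theorem~\ref{thm: Sanchez} is quoted directly from Bernal and S\'anchez and used as a black box. So your proposal can only be measured against the original argument and against internal correctness.

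Your overall architecture is right and matches the standard narrative: (1) Geroch's volume functions give a continuous Cauchy time function and the topological splitting; (2) one must smoothen it so that the gradient is genuinely past-directed timelike and the level sets remain Cauchy; (3) the metric splitting then follows from the flow of the normalized gradient, using global hyperbolicity to see the flow is complete and orthogonality of $\nabla t$ to the level sets to kill the cross terms. Step (3) is essentially correct (modulo a sign: with $\nabla t$ past-directed you want $X = \nabla t / g(\nabla t,\nabla t)$, which is future-directed and has $X(t)=1$, not $X = -\nabla t/g(\nabla t,\nabla t)$).

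The genuine gap is in step (2), and specifically in the phrase ``the remedy is to choose each $N_\alpha$ so large that the timelike contribution dominates the spacelike error.'' This cannot work by scaling alone. If you replace $t_\alpha$ by $\lambda t_\alpha$ to drive $N_\alpha\to\lambda^2 N_\alpha$, the dangerous term $\sum_\alpha t_\alpha\,\nabla\chi_\alpha$ also scales by $\lambda$; since the ``good'' timelike part of $\nabla t$ then has Lorentzian length comparable to $\lambda\sqrt{N_\alpha}$ and the ``bad'' part scales like $\lambda$, the ratio between them is unchanged. In other words, steepness of the local pieces, by itself, does not absorb the partition-of-unity derivatives, and your sketch as written has no mechanism that actually shrinks that error. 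The way Bernal--S\'anchez circumvent this is structurally different: they do \emph{not} glue with a partition of unity. Instead they construct, around points of a Cauchy hypersurface, local nonnegative bump-type functions $h_p$ whose gradients are everywhere either zero or past-directed timelike (and which vanish identically on the past of a fixed level set), and then simply take the locally finite sum $h = \sum_\alpha h_{p_\alpha}$. Because the closed past causal cone is convex, a locally finite sum of causal past-directed vectors is again causal past-directed; where at least one summand has strictly timelike gradient the sum is strictly timelike. This replaces the partition-of-unity bookkeeping by a pointwise convexity argument in the cone, and then the ``time step function'' machinery (summing such $h$'s over consecutive strips of Geroch's $\tau$) builds the global Cauchy temporal function and guarantees that the level sets are Cauchy, which is the second point you correctly flag as delicate. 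Your proposal identifies the right obstructions but offers a remedy that does not close them; the cone-convexity/sum-of-bumps construction is the missing idea.
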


Let now $\E$ be a $\mathbb{K}$-vector bundle over a globally hyperbolic manifold $\M_g$ with finite rank $N$ and  endowed with a (possibly indefinite) non-degenerate sesquilinear fiber metric 
\begin{equation}\label{eq:fibermetric}
\fiber{\cdot}{\cdot}: \E_p\times\E_p\to\KK\,.
\end{equation}

\begin{definition}[\cite{Ba}, Definition 5.1]\label{def:symm syst}
 A linear differential operator $\oS \colon \Gamma(\E) \to \Gamma(\E)$ of first order is called a \emph{symmetric hyperbolic system} over $\M_g$ if 
\begin{enumerate}
\item[(S)] The principal symbol $\sigma_\oS (\xi) \colon \E_p \to \E_p$ is Hermitian with respect to $\fiber{\cdot}{\cdot}$ for every $\xi\in \T^*_p\M$ and for every $p \in \M$;
\item[(H)] For every future-directed timelike covector $\tau \in \T_p^*\M$, the bilinear form $\fiber{\sigma_\oS (\tau) \cdot}{\cdot}$ is positive definite on $\E_p$.
\end{enumerate}
\end{definition}
\begin{remark}
Notice that Definition~\ref{def:symm syst} depends on the fiber metric $\fiber{\cdot}{\cdot}$ and on the Lorentzian metric $g$ which defined the set of future-directed timelike covectors.
\end{remark}

Let us recall that for a first-order linear differential operator $\oS \colon \Gamma(\E) \to \Gamma(\E)$ the principal symbol $\sigma_\oS\colon \T^*\M \to \text{End}(\E)$ can be characterized by 
\begin{equation}\label{eq:princ simb}
\oS( f u) = f \oS u + 	\sigma_\oS (d f )u
\end{equation}
 where $u \in \Gamma(\E)$ and $f \in C^\infty(\M)$.
If we choose local coordinates $(t, x^1, \ldots, x^n)$ on $\M$, with $x^i$ local coordinates on $\Sigma_t$, and a local trivialization of $\E$, any linear differential operator $\oS\colon \Gamma(\E) \to \Gamma(\E)$ of first order reads in a point $p\in \M$ as
\begin{equation}\label{eq: SHS in chart}
\oS:=  A_0(p) \partial_t + \sum_{j=1}^n A_j(p) \partial_{x^j} + B(p)
\end{equation}
where the coefficients $A_0, A_j,B$ are $N\times N$ matrices, with $N$ being the rank of $\E$, depending smoothly on $p\in\M$.
In these coordinates, Condition~(S) in Definition~\ref{def:symm syst} reduces to 
$$\fiber{A_0\,\cdot}{\cdot}=\fiber{\cdot}{A_0\,\cdot} \qquad \text{and} \qquad \fiber{A_j\,\cdot}{\cdot}=\fiber{\cdot}{A_j\,\cdot}$$
 for $j=1,\dots, n$.  Condition (H) can be stated as follows: For any future directed, timelike covector $\tau=dt + \sum_j \alpha_j dx^j$,  $$ \fiber{\sigma_\oS(\tau)\cdot}{\cdot}=\fiber{(A_0 + \sum_{j=1}^{n} \alpha_j A_j)\cdot}{\cdot}$$
 defines a scalar product on $\E_p$.  \medskip
 \begin{remark}\label{rmk:dual operator}
Let $\oS^\dagger$ be the \emph{formal adjoint} operator of $\oS$, \ie the unique linear differential operator $\oS^\dagger:\Gamma(\E)\to\Gamma(\E)$ defined by
$$\int_{\M}\fiber{\Psi}{\oS\Phi} \vol_{\M}=\int_{\M}\fiber{\oS^{\dagger}\Psi}{\Phi} \vol_{\M}$$
for every $\Phi,\Psi\in\Gamma(\E)$ with $\supp\Phi\cap\supp\Psi$ compact. Its principal symbol $\sigma_{\oS^\dagger}$ satisfies
$$\sigma_{\oS^\dagger}(\xi)=-\sigma_{\oS}(\xi)^t\,,$$
for every $\xi\in\T^*\M$, see e.g.~\cite[Lemma 1.1.26]{Ba-lect}. By using property (S) in Definition~\ref{def:symm syst} it follows that $-\oS^\dagger$ is a symmetric hyperbolic system. Similarly, let  $\oS^*$ be the \emph{formal dual} operator of $\oS$, \ie the unique linear differential operator acting on sections of the dual vector bundle $\oS^*:\Gamma(\E^*)\to\Gamma(\E^*)$ defined by
$$\int_{\M} \Phi^*(\oS\Psi) \, \vol_{\M}=\int_{\M} (\oS^{*}\Phi^*)\Psi \, \vol_{\M}$$
for every $\Phi^*\in \Gamma(\E^*)$, $\Psi\in\Gamma(\E)$ with $\supp\Phi^*\cap\supp\Psi$ compact. By introducing the vector bundle isometry
$$\Upsilon:\E \to \E^* \qquad \Phi \mapsto  \Phi^*:=\fiber{\Phi}{\cdot} \,,$$
computations show that $\oS^*=\Upsilon \oS^\dagger \Upsilon^{-1}$. For further details we refer to ~\cite[Section 1.1.3]{Ba-lect}.
\end{remark}

\begin{theorem}[\cite{Ba}, Theorem 5.6 and Proposition 5.7]\label{thm:main shs}
 The Cauchy problem for a symmetric hyperbolic system $\oS$ on a globally hyperbolic manifold is well-posed, \ie  for any $\ff \in \Gamma_c(\E)$ and $\fh \in \Gamma_c(\E|_{\Sigma_0})$ there exists a unique smooth solution $\Psi\in\Gamma_{sc}(\E)$ with spatially compact support to the initial value problem
\begin{equation}\label{CauchyK}
\begin{cases}{}
{\oS }\Psi=\ff   \\
\Psi|_{\Sigma_0} = \fh   \\
\end{cases} 
\end{equation}
which depends continuously on the data $(\ff,\fh)$.
\end{theorem}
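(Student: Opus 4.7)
The plan is to combine the temporal foliation afforded by Theorem~\ref{thm: Sanchez} with the classical Friedrichs energy-estimate approach. First, I would invoke Theorem~\ref{thm: Sanchez} to identify $\M$ with $\RR\times\Sigma$ carrying $g=-\beta^2 dt^2+h_t$, and fix a local trivialization of $\E$ adapted to the foliation; in such coordinates $\oS$ takes the form~\eqref{eq: SHS in chart}. The observation driving everything is that condition~(H), applied to the future-directed timelike covector $\beta^{-1}dt$, forces $\fiber{A_0\,\cdot}{\cdot}$ to be positive definite on each fiber $\E_p$, hence to define a Riemannian inner product on the restrictions $\E|_{\Sigma_t}$.

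Next, I would derive the fundamental energy inequality. Setting
\[
E(t):=\int_{\Sigma_t}\fiber{A_0\Psi}{\Psi}\,\vol_{\Sigma_t},
\]
I would differentiate in $t$, substitute $\oS\Psi=\ff$, and use condition~(S) to integrate by parts the contributions from $A_j\partial_{x^j}\Psi$ so that no derivative of $\Psi$ remains. This yields $\frac{d}{dt}E(t)\leq C\bigl(E(t)+\|\ff\|_{L^2(\Sigma_t)}^2\bigr)$ with $C$ locally bounded, and Gronwall's inequality then controls $E(t)$ by $E(0)$ and $\ff$. Applied to the difference of two solutions this yields uniqueness, while the explicit bound yields continuous dependence on $(\fh,\ff)$; higher regularity of $\Psi$ follows by applying the same estimate to the symmetric hyperbolic systems satisfied by its derivatives, obtained by commuting $\oS$ with differentiations.

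To obtain finite propagation speed I would localize the preceding calculation on a truncated causal lens $J^-(\Sigma_T)\cap J^+(K)$: condition~(H) ensures that the lateral boundary flux, an integral of $\fiber{\sigma_\oS(\nu^\flat)\Psi}{\Psi}$ against the future-directed timelike outward conormal $\nu^\flat$, is non-negative, so $\Psi$ must vanish outside $J^+(\supp\fh\cup\supp\ff)$. Global hyperbolicity then forces the intersection of this causal hull with each $\Sigma_t$ to be compact, which is exactly $\Psi\in\Gamma_{sc}(\E)$. For existence in a thin slab $[t_0,t_0+\delta]\times\Sigma$, I would run a standard Friedrichs scheme (mollification plus Galerkin or vanishing-viscosity) whose uniform bounds are supplied by the energy estimate, extract a weak limit, and bootstrap to smoothness; finite propagation speed and uniqueness then permit gluing consecutive slab solutions into the desired global $\Psi$.

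The main obstacle I expect is the patching step: the Friedrichs approximation scheme is intrinsically chart-dependent, whereas the symbol data and the (possibly indefinite) fiber metric $\fiber{\cdot}{\cdot}$ are globally defined. Uniqueness on overlapping slabs resolves the ambiguity in principle, but verifying that the local approximations can be matched consistently across transitions of trivialization, without losing the positivity of $\fiber{A_0\,\cdot}{\cdot}$ that underpins the energy identity, is the most delicate part of the argument.
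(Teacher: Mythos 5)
The paper does not prove this theorem itself; it cites it directly from B\"ar (Theorem 5.6 and Proposition 5.7 of~\cite{Ba}), and the cited argument is exactly the classical Friedrichs energy-estimate machinery you describe: positive definiteness of $\fiber{A_0\,\cdot}{\cdot}$ from condition~(H), Gronwall for uniqueness and stability, localization for finite propagation speed, an approximation scheme for local existence, and gluing. Your reconstruction is essentially faithful to that route; the one imprecision worth flagging is in the finite-propagation-speed step, where the outward conormal $\nu^\flat$ to the lateral boundary of a causal lens $J^-(\Sigma_T)\cap J^+(K)$ is \emph{null} (characteristic), not timelike, so the non-negativity of the flux $\fiber{\sigma_\oS(\nu^\flat)\Psi}{\Psi}$ must be obtained by continuity of~(H) on the closure of the future cone, or by perturbing the lens to have strictly spacelike lateral boundary and passing to the limit. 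B\"ar's existence argument proceeds by duality (energy estimate for the formal adjoint plus Hahn--Banach) rather than mollification/Galerkin, but both are standard and lead to the same conclusion.
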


As a byproduct of the well-posedness of the Cauchy problem, it follows the existence of Green operators.

\begin{proposition}[\cite{Ba}, Theorem 5.9 and Theorem 3.8]\label{prop:Green}
A symmetric hyperbolic system is Green hyperbolic, \ie  there exist linear  maps, dubbed \emph{advanced Green operator}
$\G^+\colon \Gamma_{pc}(\E)  \to  \Gamma_{pc}(\E)$  and \emph{retarded Green operator} $\G^-\colon  \Gamma_{fc}(\E)  \to  \Gamma(\E)_{fc}$,  satisfying
\begin{itemize}
\item[(i.a)] $\G^+ \circ \oS\ff  = \oS\circ \G^+ \ff=\ff$ for all $ \ff \in  \Gamma_{pc}(\E)$ ,
\item[(ii.a)]  $\supp(\G^+ \ff ) \subset J^+ (\supp \ff )$ for all $\ff \in  {\Gamma}_{pc} (\E)$;
\item[(i.b)]  $\G^- \circ \oS\ff  = \oS\circ \G^- \ff=\ff$ for all $ \ff \in  \Gamma_{fc}(\E)$,
\item[(ii.b)]  $\supp(\G^- \ff ) \subset J^- (\supp \ff )$ for all $\ff \in  {\Gamma}_{fc} (\E)$.
\end{itemize}
\end{proposition}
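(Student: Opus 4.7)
My plan is to bootstrap from the well-posedness of the Cauchy problem (Theorem~\ref{thm:main shs}) in two stages: first build the advanced operator on compactly supported sources, then extend to past-compactly supported ones by a partition-of-unity argument. The construction of $\G^-$ will be entirely analogous, by reversing the time orientation, so I will focus on $\G^+$.

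First, for $\ff\in\Gamma_c(\E)$, since $\supp\ff$ is compact I can use Theorem~\ref{thm: Sanchez} to pick a Cauchy hypersurface $\Sigma_{t_0}$ lying strictly to the past of $\supp\ff$, and then invoke Theorem~\ref{thm:main shs} to produce the unique spatially compact solution $\Psi$ of $\oS\Psi=\ff$ with $\Psi|_{\Sigma_{t_0}}=0$. Setting $\G^+\ff:=\Psi$, the identity $\oS\G^+\ff=\ff$ is immediate, and $\G^+\oS\ff=\ff$ follows by applying uniqueness from Theorem~\ref{thm:main shs} to the spatially compact homogeneous solution $\G^+\oS\ff-\ff$, which vanishes on $\Sigma_{t_0}$. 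For the support estimate, the domain-of-dependence/finite-propagation-speed consequence of the $L^2$-energy estimates underlying Theorem~\ref{thm:main shs} (for which positivity of $\fiber{\sigma_\oS(\tau)\cdot}{\cdot}$ on future-directed timelike $\tau$ is the crucial input) forces $\Psi$ to vanish at any $p\notin J^+(\supp\ff)$.

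To extend $\G^+$ to $\Gamma_{pc}(\E)$, I would fix a locally finite partition of unity $\{\chi_n\}_n$ subordinate to a covering of $\M$ by relatively compact opens, so that each $\chi_n\ff\in\Gamma_c(\E)$, and set
\[
\G^+\ff:=\sum_n\G^+(\chi_n\ff).
\]
The key bookkeeping is local finiteness: for any compact $K\subset\M$, the term $\G^+(\chi_n\ff)$ can meet $K$ only when $\supp\chi_n$ intersects the compact set $\supp\ff\cap J^-(K)$ (compact by past-compactness of $\supp\ff$), and by local finiteness of the partition this happens for only finitely many $n$. The same bookkeeping yields $\supp(\G^+\ff)\subset J^+(\supp\ff)$ and, since $J^+$ of a past-compact set is again past-compact in a globally hyperbolic manifold, $\G^+\ff\in\Gamma_{pc}(\E)$.

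The hardest step, I expect, is proving that $\G^+\ff$ does not depend on the partition of unity and that the identities $\G^+\oS=\oS\G^+=\Id$ still hold on all of $\Gamma_{pc}(\E)$. Both reduce to the uniqueness lemma: \emph{any homogeneous solution $\Psi\in\Gamma(\E)$ whose support is past-compact must vanish}. Indeed, for any $p\in\M$, the compactness of $\supp\Psi\cap J^-(p)$ lets me pick a Cauchy hypersurface $\Sigma$ entirely below this compact set, so that $\Psi|_{\Sigma\cap J^-(p)}\equiv 0$, after which finite propagation speed forces $\Psi(p)=0$. The true obstacle is therefore not the extension procedure itself but promoting Theorem~\ref{thm:main shs} to a quantitative finite-propagation-speed statement for symmetric hyperbolic systems, which must be extracted from the energy estimates hiding behind its proof (and for which condition (H) of Definition~\ref{def:symm syst} is essential).
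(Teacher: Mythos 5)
The paper does not prove this proposition: it is imported wholesale from B\"ar's work (\cite{Ba}, Theorems~5.9 and~3.8), and the two-stage structure you use --- Green operators on $\Gamma_c(\E)$ from well-posedness plus finite propagation speed, then extension to $\Gamma_{pc}(\E)$/$\Gamma_{fc}(\E)$ --- is exactly the structure of the cited argument. Your reconstruction is sound. Two remarks on where your route diverges from the reference in detail. First, B\"ar's extension step (his Theorem~3.8) does not sum over a partition of unity; it observes that for $\ff\in\Gamma_{pc}(\E)$ and a compact $K$, the restriction $\G^+(\chi\ff)\big|_K$ is independent of the cutoff $\chi$ once $\chi\equiv 1$ on the compact set $\supp\ff\cap J^-(K)$, and this stability directly defines the extension and gives the support estimate without any convergence bookkeeping. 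Your partition-of-unity sum is equivalent --- your local-finiteness argument is precisely what makes the stabilization work --- but you should be aware it is proving the same coherence statement in a slightly more roundabout way, and the well-definedness you flag as ``the hardest step'' is actually immediate in the stabilization formulation. Second, you are right to flag finite propagation speed as the essential input not literally contained in the paper's Theorem~\ref{thm:main shs} as quoted: that theorem only asserts spatially compact support, whereas what your support estimate (ii.a) and your uniqueness lemma both need is the sharper containment $\supp\Psi\subset J^+(\supp\fh)\cup J^+(\supp\ff)$, which comes from the energy estimate along Cauchy slabs and is a separate theorem in \cite{Ba}. So this is not a gap in your argument so much as a correct identification of the one result you must borrow rather than re-derive; condition~(H) of Definition~\ref{def:symm syst} is indeed what makes the energy method go through.
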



\begin{remark}\label{rmk:dual green}
On account of Remark~\ref{rmk:dual operator} it immediately follows that the formal dual operator $\oS^*$ is Green hyperbolic
\begin{align*}
&\G{^+}^*\colon  \Gamma_{pc}(\E^*) \cap   \Gamma_{sc}(\E^*)  \to  \Gamma_{pc}(\E^*) \cap   \Gamma_{sc}(\E^*)  \\
& {\G^-}^*\colon  \Gamma_{fc}(\E^*) \cap   \Gamma_{sc}(\E^*)  \to  \Gamma_{pc}(\E^*) \cap   \Gamma_{sc}(\E^*) 
\end{align*}
A straightforward computation shows that 
\begin{align*}
&\int_\M ({\G^+}^* \Phi_1^*) (\Psi_1) \vol_\M =\int_\M \Phi_1^*(\G^-\Psi_1) \vol_\M \qquad
\int_\M ({\G^-}^* \Phi_2^*) (\Psi_2) \vol_\M =\int_\M \Phi_2^*(\G^+\Psi_2) \vol_\M\,,
\end{align*}
for every $\Phi_1^*\in \text{dom}({\G^+}^*)$, $\Phi_2^*\in \text{dom}({\G^-}^*)$, $\Psi_1\in \text{dom}({\G^-})$ and $\Psi_2\in \text{dom}({\G^+})$. 
\end{remark}

\begin{definition}\label{def:causal prop}
Let $\oS$ be a Green hyperbolic operator and denote with  $\G^+$ and $\G^-$ respectively the advanced and retared Green operator. We call  \emph{causal propagator} $ \G: {\Gamma}_{fc}(\E)\to {\Gamma}(\E)$ the operator defined by $ \G:=\G^+-\G^-$.
\end{definition}

The causal propagator characterize the space of solutions to the homogeneous Cauchy problem, namely for any $\ff\in\Gamma_{tc}(\E)$, $\Psi:=\G\ff \in\ker\oS$. The properties of the causal propagator are summarized in the following proposition.
\begin{proposition}[\cite{CQF1}, Theorem 3.5]\label{prop:causal prop}
Let $\G$ be the causal propagator for a Green hyperbolic operator $\oS:\Gamma(\E)\to\Gamma(\E)$. Then the following linear maps forms an exact sequence
$$ \{0\} \to \Gamma_{tc}(\E)\xrightarrow{\oS} \Gamma_{tc}(\E) \xrightarrow{\G} \Gamma(\E)\xrightarrow{\oS} \Gamma(\E) \to \{0\}\,.
$$
\end{proposition}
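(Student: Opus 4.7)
I would verify the four properties in order: injectivity of the first $\oS$, exactness at each middle spot, and surjectivity of the last $\oS$. All four follow fairly directly from Proposition~\ref{prop:Green}.

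For injectivity: if $\oS\ff=0$ with $\ff\in\Gamma_{tc}(\E)\subset\Gamma_{pc}(\E)$, then $\ff=\G^+\oS\ff=0$. For exactness at the second $\Gamma_{tc}(\E)$, the identity $\G\circ\oS=(\G^+-\G^-)\circ\oS=0$ on $\Gamma_{tc}(\E)\subset\Gamma_{pc}(\E)\cap\Gamma_{fc}(\E)$ gives one inclusion; for the other, if $\G\ff=0$ with $\ff\in\Gamma_{tc}(\E)$, then $\Psi:=\G^+\ff=\G^-\ff$ has support in $J^+(\supp\ff)\cap J^-(\supp\ff)$, hence $\Psi\in\Gamma_{pc}(\E)\cap\Gamma_{fc}(\E)=\Gamma_{tc}(\E)$, and $\oS\Psi=\ff$.

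For exactness at the third position, $\oS\circ\G=0$ is immediate. For the converse I would take $\Psi\in\Gamma(\E)$ with $\oS\Psi=0$, fix a Cauchy temporal function $t$ via Theorem~\ref{thm: Sanchez}, and choose $\chi\in C^\infty(\M,[0,1])$ with $\chi\equiv 0$ on $\{t\leq t_-\}$ and $\chi\equiv 1$ on $\{t\geq t_+\}$. Setting $\ff:=\oS(\chi\Psi)$ and using~\eqref{eq:princ simb} together with $\oS\Psi=0$ yields $\ff=\sigma_\oS(d\chi)\Psi$, whose support lies in the slab $\{t_-\leq t\leq t_+\}$, so $\ff\in\Gamma_{tc}(\E)$. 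Since $\chi\Psi\in\Gamma_{pc}(\E)$ and $(\chi-1)\Psi\in\Gamma_{fc}(\E)$, applying $\G^\pm$ to the identities $\oS(\chi\Psi)=\ff=\oS((\chi-1)\Psi)$ gives $\G^+\ff=\chi\Psi$ and $\G^-\ff=(\chi-1)\Psi$, so $\G\ff=\Psi$. The same cutoff handles surjectivity of the final $\oS$: write $\ff=\chi\ff+(1-\chi)\ff$ with the two summands in $\Gamma_{pc}(\E)$ resp.\ $\Gamma_{fc}(\E)$, and set $\Psi:=\G^+(\chi\ff)+\G^-((1-\chi)\ff)$.

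The only point that is not pure bookkeeping, and which I would isolate as a preliminary lemma, is the recurring claim that a closed subset contained in a temporal slab between two Cauchy level sets of $t$ is both past and future compact in the sense used throughout. This is where the global hyperbolicity assumption is genuinely needed: it reduces to the compactness of the causal diamonds $J^+(q)\cap J^-(p)$, combined with the foliation structure provided by Theorem~\ref{thm: Sanchez}.
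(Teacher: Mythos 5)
Your proof is correct. The paper itself does not prove this proposition --- it is cited from \cite{CQF1}, Theorem 3.5 --- but your argument (the Leibniz-rule cutoff trick via~\eqref{eq:princ simb} combined with the support and inversion properties of Proposition~\ref{prop:Green}, plus the lemma that a closed subset of a temporal slab $t^{-1}([t_-,t_+])$ is both past and future compact in a globally hyperbolic manifold) is essentially the standard proof found in that reference.
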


\subsection{Geometric examples} 
In this section, we shall review two of the most important examples of symmetric hyperbolic systems: the classical Dirac operator and the geometric wave operator. More examples can be found in~\cite{GerochBook,IgorSHS}, while for further details on spin geometry on Lorentzian spin manifold, we refer to~\cite{Baer,SpinGeom,DHP}.

\subsubsection{The classical Dirac operator}\label{sec:spin geom}
Let $\M_g$ be a globally hyperbolic manifold and assume to have a spin structure \ie  a twofold covering map from the $\Spin_0(1,n)$-principal bundle $\P_{\rm 
Spin_0}$  to the bundle of positively-oriented tangent frames $\P_{\rm SO^+}$ of 
$\M$ such that the following diagram is commutative:
\begin{flalign*}
\xymatrix{
\P_{\Spin_0} \times \Spin_0(1,n) \ar[d]_-{} \ar[rr]^-{} && \P_\Spin \ar[d]_-{} 
\ar[drr]^-{}   \\
\P_{\rm SO^+} \times \textnormal{SO}(1,n)   \ar[rr]^-{} &&  \P_{\rm SO^+}  
\ar[rr]^-{} &&  \M_g\,.
}
\end{flalign*}
The existence of spin structures is related to the topology of $\M_g$.
A 
sufficient (but not necessary) condition for the existence of a spin structure 
is the parallelizability of the manifold.
Therefore, since any $3$-dimensional 
orientable manifold is parallelizable, it follows by Theorem~\ref{thm: Sanchez} 
that any 4-dimensional globally hyperbolic manifold admits a spin structure.
Given a fixed spin structure, one can use the spinor representation to 
construct 
the {spinor bundle}
\begin{definition}\label{def:spinor bundle}
Let $\M_g$ be a (globally hyperbolic) spin manifold. The \emph{(complex) spinor bundle} is the complex vector bundle
$$\S\M_g:=\Spin_0(1,n)\times_\rho \CC^N$$
where $\rho: \Spin_0(1,n) \to \textnormal{Aut}(\CC^N)$ is the complex 
$\Spin_0(1,n)$ representation and $N:= 2^{\lfloor \frac{n+1}{2}\rfloor}$. 
\end{definition}
The spinor bundle is enriched with the following structure:
\begin{itemize}
\item[-]a natural $\Spin_0(1, n)$-invariant indefinite fiber metrics
\begin{equation*}\label{eq: spin prod}
\fiber{\cdot}{\cdot}: \S_p\M_g \times \S_p\M_g \to \CC;
\end{equation*}
\item[-] a \textit{Clifford multiplication}, \ie a \textit{Clifford multiplication}, \ie  a fiber-preserving map 
$$\gamma\colon \T\M\to \text{End}(\S\M_g)$$ 
which satisfies
 for all $p \in \M_g$, $u, v \in \T_p\M$ and $\psi,\phi\in \S_p\M_g$
\begin{equation} \label{eq:gamma symm}
 \gamma(u)\gamma(v) + \gamma(v)\gamma(u) = -2g(u, v)\Id_{\S_p\M_g}\, \quad \text{and}\quad \fiber{\gamma(u)\psi}{\phi}=\fiber{\psi}{\gamma(u)\phi}\,.
\end{equation}
\end{itemize}
Using the spin product~\eqref{eq: spin prod}, we denote as \emph{adjunction map}, the complex anti-linear vector bundle isomorphism by
\begin{equation}\label{eq:adj map}
\Upsilon_p:\S_p\M_g\to \S^*_p\M_g  \qquad \psi \mapsto \fiber{\psi}{\cdot}\,,
\end{equation}
where  $\S^*_p\M_g$ is the so-called \emph{cospinor bundle}, \ie  the dual bundle of $\S_p\M_g$.
\begin{definition}\label{def:Dirac operator}
The \textit{(classical) Dirac operator} $\Dir$ is the operator defined as the composition of the metric connection $\nabla^{\S\M_g}$ on $\S\M_g$, obtained as a lift of the Levi-Civita connection on $\T\M$, and the Clifford multiplication:
$$\Dir=\gamma\circ\nabla^{\S\M_g} \colon \Gamma(\S\M_g) \to \Gamma(\S\M_g)\,.$$
\end{definition}

In local coordinates and with a trivialization of the spinor bundle $\S\M_g$, the Dirac operator reads as
\begin{align*}
	\Dir \psi = \sum_{\mu=0}^{n}  \epsilon_\mu \gamma(e_\mu) \nabla^{\S\M_g}_{e_\mu} \psi\, 
\end{align*}
where  $\{e_\mu\}$ is a local Lorentzian-orthonormal frame of $\T\M$ and $\epsilon_\mu=g(e_\mu,e_\mu)=\pm 1$.
 
\begin{remark}\label{rmk:metric dep}
Note that unlike differential forms, the definition of spinors (and cospinors) requires the choice of a spin structure and it depends on the metric of the underlying manifold. 
\end{remark}

\begin{proposition}\label{prop:Dir SHS}
The classical Dirac operator $\Dir$ on globally hyperbolic spin manifolds $\M_g$ is a symmetric hyperbolic system.
\end{proposition}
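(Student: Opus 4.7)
The plan is to verify both conditions (S) and (H) of Definition~\ref{def:symm syst} for $\Dir$. The starting point is the computation of the principal symbol. For $f\in C^\infty(\M)$ and $\psi\in\Gamma(\S\M_g)$, the Leibniz rule for $\nabla^{\S\M_g}$ together with the definition $\Dir=\gamma\circ\nabla^{\S\M_g}$ yields
\[
\Dir(f\psi)=f\,\Dir\psi+\sum_{\mu=0}^{n}\epsilon_\mu\,\gamma(e_\mu)\,e_\mu(f)\,\psi=f\,\Dir\psi+\gamma\bigl((df)^\sharp\bigr)\psi,
\]
where $(df)^\sharp$ denotes the vector field metrically dual to $df$. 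Comparing with~\eqref{eq:princ simb}, we obtain $\sigma_\Dir(\xi)=\gamma(\xi^\sharp)$ for every $\xi\in\T^*_p\M$.

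Condition (S) then follows immediately from the symmetry property of the Clifford multiplication stated in~\eqref{eq:gamma symm}: for every $\xi\in\T^*_p\M$ and every $\psi,\phi\in\S_p\M_g$,
\[
\fiber{\sigma_\Dir(\xi)\psi}{\phi}=\fiber{\gamma(\xi^\sharp)\psi}{\phi}=\fiber{\psi}{\gamma(\xi^\sharp)\phi}=\fiber{\psi}{\sigma_\Dir(\xi)\phi},
\]
so $\sigma_\Dir(\xi)$ is Hermitian with respect to $\fiber{\cdot}{\cdot}$.

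The main obstacle is condition (H), which is not formal but relies on the specific construction of the indefinite fiber metric on $\S\M_g$. The plan is to fix a future-directed timelike covector $\tau\in\T^*_p\M$ and pick a Lorentzian-orthonormal frame $\{e_0,\dots,e_n\}$ at $p$ with $e_0$ future-directed timelike and proportional to $\tau^\sharp$; after rescaling we may assume $\tau^\sharp=e_0$. By the $\Spin_0(1,n)$-invariance of the spin product and standard facts about its explicit form in the spinor representation (see e.g.~\cite{Baer,SpinGeom}), the sesquilinear form
\[
(\psi,\phi)_{e_0}:=\fiber{\gamma(e_0)\psi}{\phi}
\]
is a positive definite Hermitian scalar product on $\S_p\M_g$. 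Since the set of future-directed timelike covectors is connected and the map $\tau\mapsto\fiber{\gamma(\tau^\sharp)\cdot}{\cdot}$ depends continuously on $\tau$ while never being degenerate (as $\gamma(\tau^\sharp)^2=-g(\tau^\sharp,\tau^\sharp)\Id=|\tau|^2\Id$ is invertible on the future timelike cone), the signature cannot jump, and positive definiteness for $e_0$ propagates to every future-directed timelike $\tau$. This establishes (H) and completes the verification that $\Dir$ is a symmetric hyperbolic system.
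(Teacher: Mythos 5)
Your proof is correct, and the treatment of the principal symbol and of condition (S) coincides with the paper's: you compute $\sigma_\Dir(\xi)=\gamma(\xi^\sharp)$ via the Leibniz rule and invoke the symmetry of Clifford multiplication in~\eqref{eq:gamma symm}. Where you diverge is in condition (H). The paper simply defers the whole of (H) to~\cite[Proposition 1.1]{dimock}, with a caveat that the spin product must be chosen with the ``appropriate sign.'' You instead give a more self-contained argument that isolates the single external input — positive definiteness of $\fiber{\gamma(e_0)\cdot}{\cdot}$ for one future-directed unit timelike $e_0$ — and then propagate it over the whole future cone by noting that $\tau\mapsto\fiber{\gamma(\tau^\sharp)\cdot}{\cdot}$ is a continuous, everywhere nondegenerate family of Hermitian forms (nondegenerate because $\gamma(\tau^\sharp)^2=-g(\tau^\sharp,\tau^\sharp)\Id$ is invertible for timelike $\tau$ and $\fiber{\cdot}{\cdot}$ is nondegenerate), hence of constant signature on the connected future cone. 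This connectedness step is a clean substitute for the usual $\Spin_0(1,n)$-equivariance argument (transitivity on unit timelike vectors) and makes explicit exactly where the paper's sign caveat enters. One minor redundancy: since you already choose a Lorentzian-orthonormal frame adapted to the given $\tau$ (so $\tau^\sharp$ is a positive multiple of $e_0$), positivity for that $\tau$ follows directly from the $e_0$ base case, and the connectedness argument is not strictly needed — either closing is valid on its own.
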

\begin{proof}
 The principal symbol $\sigma_\Dir$ of the Dirac operator reads as
\begin{equation}\label{eq:sigmaD}
\sigma_\Dir(\xi) \psi=\gamma(\xi^\sharp) \psi
\end{equation}
where $\xi\in \Gamma(\T^*\M)$, $\psi\in\Gamma(\S\M_g)$ and $\sharp:\T^*\M\to \T\M$ is the musical isomorphism implemented by the Lorentzian metric. Therefore, Property (S) of Definition~\ref{def:symm syst} is verified on account of~\eqref{eq:gamma symm}, while Property (H) follows by~\cite[Proposition 1.1]{dimock}, provided that the spin product~\eqref{eq: spin prod} was chosen with the appropriate sign.
\end{proof}

\begin{remark}
Noticed that in the literature it is often used the canonical positive-definite scalar product $\fiber{}{}_{\CC^N}$ despite the indefinite, non-degenerate spin product~\eqref{eq: spin prod}. As a consequence, $\Dir$ is no longer a symmetric hyperbolic system, but $\gamma(\frac{\partial}{\partial t}) \Dir$ does satisfies the required properties, see e.g.~\cite{Ginoux-Murro-20,DiracMIT}.
\end{remark}

\begin{remark}\label{rmk:soldual}
By Theorem~\ref{thm:main shs}, it follows that the space of solutions $\sol(\Dir)$ of the Dirac equation is not trivial. Moreover, since the Dirac operator is formally skew-adjoint, \ie $\Dir=-\Dir^\dagger$, by Remark~\ref{rmk:dual operator} it follows that for any solution $\psi$ of the homogeneous Dirac equation, the adjunction map $\Upsilon$ realizes an isomorphism of vector spaces, namely
\begin{equation}\label{eq:sol Dir e Dir*}
\ker\Dir \ni \psi \mapsto  \Upsilon \psi \in \ker \Dir^* \qquad \text{and}\qquad \ker \Dir^* \ni \phi^* \mapsto \Upsilon^{-1}\phi^* \in \sol(\Dir) \,. 
\end{equation}
\end{remark}

\subsubsection{The geometric wave operator}\label{sec:GWO}

Let $\V$ be an Hermitian vector bundle of finite rank and consider a normally hyperbolic operator  $\P:\Gamma(\V)\to\Gamma(\V)$ , \ie a $2^{nd}$-order linear differential operator with principal symbol $\sigma_\P$ defined by 
$$\sigma_\P(\xi)=-g(\xi,\xi)\cdot\mathrm{Id}_\V\,,$$
for every $\xi\in \T^*\M$.
Following \cite[Remark 3.7.11]{Ba-lect}, we shall reduce $\P$ to a symmetric hyperbolic system, but first we assume, without loss of generality, that $\M_g=(\M,g)$ is given by 
$$\M:=\RR\times\Sigma \qquad g=-\beta^2 dt^2 + h_t$$
see Theorem~\ref{thm: Sanchez}. 
By \cite[Lemma 1.5.5]{wave}, there exists a unique (metric) connection $\nabla^\V$ on $\V$ and a unique endomorphism field $c\in\Gamma(\text{End}(\V))$ such that 
\begin{equation}\label{eq:wave type}
\P=\tr_g({\nabla^\V}{\nabla^\V})+c = \frac{1}{{\beta^2}}\nabla_{\partial_t}^2+b_0\nabla_{\partial_t}+(\nabla^\Sigma)^*\nabla^{\Sigma}+\nabla_b^\Sigma + c,
\end{equation}
where  $\nabla^\Sigma$ is defined by $\nabla_X^\Sigma :=\nabla^\V_X $ for all $X\in \T\Sigma$, while $b_0\in C^\infty(\M)$ and $b\in\Gamma(\T\Sigma)$ are given by
$$b_0:=\frac{1}{2\beta^2}\left(\mathrm{tr}_{h_t}(\partial_t h_t)-\frac{\partial_t\beta^2}{{\beta^2}}\right)  \qquad \text {and }\qquad b:=-\frac{1}{2\beta^2}\mathrm{grad}_{h_t}({\beta^2}) \,. $$
Equation~\eqref{eq:wave type} allows us to rewrite the Cauchy problem for $\P:\Gamma(\V)\to\Gamma(\V)$
\begin{equation}\label{eq:Cauchy P}
 \begin{cases}{}
{\P} u= f   \\
u|_{\Sigma_0} = h   \\
\nabla_{\partial_t} u|_{\Sigma_0} = h'   \\
\end{cases} 
\end{equation} 
 as a Cauchy problem for $\oS:\Gamma(\E) \to \Gamma(\E)$,  
\begin{equation}\label{eq:Cauchy oS} \begin{cases}{}
{\oS }\Psi:= (A_0 \nabla^\V_{\partial_t} + A_\Sigma \nabla^{\Sigma} + B) \Psi =  \ff   \\
\Psi|_{\Sigma_0} = \fh   \\
\end{cases} 
\end{equation} 
 where $\E$ is the Hermitian vector bundle $\E:=\V\oplus(\T^*\Sigma\otimes \V)\oplus \V$, $B\in\Gamma(\text{End}(\E))$  and
$$
\Psi:=\begin{pmatrix}
\nabla^\V_{\partial_t}u\\\nabla^\Sigma u\\ u
\end{pmatrix}, \qquad \ff := \begin{pmatrix}
f\\ 0 \\ 0
\end{pmatrix} \qquad
A_0:=\begin{pmatrix}
\frac{1}{{\beta^2}}&0&0\\0&1&0\\0&0&1
\end{pmatrix} \qquad A_\Sigma=\begin{pmatrix}
0&-\mathrm{tr}_{h_t}&0\\-1&0&0\\0&0&0
\end{pmatrix}\,.
$$
$$ C:=\begin{pmatrix}
b_0&b\lrcorner&c\\0&\frac{1}{2}h_t^{-1}\partial_th_t\lrcorner&R_{\partial_t,\cdot}\\-1&0&0
\end{pmatrix} \,.
$$
The Cauchy problem~\eqref{eq:Cauchy oS} should be read as follows: $\nabla_{\partial_t}\nabla^\Sigma u$ is defined
by 
$$\left(\nabla_{\partial_t}\nabla^\Sigma u\right)_X:=\nabla_{\partial_t}\nabla_X^\Sigma u-\nabla_{(\nabla_{\partial_t}X)^\Sigma}^\Sigma u$$
 for all $X\in\Gamma(\pi_2^*\T\Sigma)$.
The term $\nabla^\Sigma \Psi$ is a section of $(\T^*\Sigma\otimes \V)\oplus(\T^*\Sigma\otimes \T^*\Sigma\otimes \V)\oplus (\T^*\Sigma\otimes \V)\to\M$, the trace coefficient contracting $\T^*\Sigma\otimes\T^*\Sigma$ of course. 
The coefficient $\frac{1}{2}h_t^{-1}\partial_th_t\lrcorner$ is more or less the Weingarten map put into the $\T\Sigma$ slot.
The curvature tensor $R$ is that of $\nabla$ and is by convention given for all $X,Y\in \T\M$ by $R_{X,Y}=[\nabla_X,\nabla_Y]-\nabla_{[X,Y]}$.
As in \cite[Remark 3.7.11]{Ba-lect}, Conditions (S) and (H) can be easily checked. Hence $\oS$ is a symmetric hyperbolic system.
\begin{remark}
Notice that, while any solution $u$ of the Cauchy problem~\eqref{eq:Cauchy P} gives a solution $\Psi$ to the Cauchy problem~\eqref{eq:Cauchy oS}, the contrary does not hold. Indeed, the space of initial data for $\Psi$ is ``too large'' and some a suitable restriction has to be imposed. For further details we refer to \cite[Remark 3.7.11]{Ba-lect}.
\end{remark}

\section{Intertwining operators}\label{sec:main}
This section aims to generalize the results of Dappiaggi and Drago in~\cite{Moller} by constructing a geometric map between the solutions space of symmetric hyperbolic systems defined on (possibly different) vector bundles over (possibly different) globally hyperbolic manifolds. Since the construction of a vector bundle can depend in general on the metric of the underlying Lorentzian manifold, as for the case of classical Dirac operator, it became necessary first to find a path connecting different metrics. Despite the space of Lorentzian metrics on a fixed smooth manifold is not path-connected, when we restrict our attention to globally hyperbolic manifolds, we get the following result.

\begin{lemma}\label{lem:glob hyp}
Let $\mathcal{GH}_\M$ be the space of globally hyperbolic metrics on a smooth manifold $\M$ such that, for any $g_0 ,g_1\in \mathcal{GH}_\M$, $\M_0=(\M,g_0)$ and $\M_1=(\M,g_1)$ have the same Cauchy temporal function. Then $\mathcal{GH}_\M$ is convex.
\end{lemma}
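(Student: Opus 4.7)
The plan is to leverage the shared Cauchy temporal function $t$ via Theorem~\ref{thm: Sanchez}, write both metrics in their splitting form, and then observe that this form is preserved by convex combinations. Concretely, I write
\[
g_0 = -\beta_0^2\,dt^2 + h_{0,t}, \qquad g_1 = -\beta_1^2\,dt^2 + h_{1,t}
\]
on $\M \cong \RR\times\Sigma$, with $\beta_i > 0$ smooth and $h_{i,t}$ a smooth family of Riemannian metrics on $\Sigma$. Setting $\beta_\lambda^2 := (1-\lambda)\beta_0^2 + \lambda\beta_1^2$ and $h_{\lambda,t} := (1-\lambda)h_{0,t} + \lambda h_{1,t}$, a direct computation yields
\[
g_\lambda := (1-\lambda)g_0 + \lambda g_1 = -\beta_\lambda^2\,dt^2 + h_{\lambda,t}.
\]
Since $\beta_\lambda^2 > 0$ and $h_{\lambda,t}$ remains positive-definite as a convex combination of Riemannian fiber metrics, $g_\lambda$ is a smooth Lorentzian metric of signature $(-,+,\ldots,+)$, and its gradient $\nabla_{g_\lambda}t = -\beta_\lambda^{-2}\partial_t$ is past-directed $g_\lambda$-timelike, so $t$ is still a temporal function.

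To conclude that $g_\lambda \in \mathcal{GH}_\M$, it remains to check that the level sets of $t$ are Cauchy hypersurfaces for $g_\lambda$; by the converse direction of Theorem~\ref{thm: Sanchez}, this is equivalent to global hyperbolicity. The crucial tool is the pointwise identity $g_\lambda(v,v) = (1-\lambda)g_0(v,v) + \lambda g_1(v,v)$: if both $g_0(v,v) > 0$ and $g_1(v,v) > 0$, then so is the convex combination, hence every $g_\lambda$-causal vector must be $g_0$-causal or $g_1$-causal. Given a future-inextendible $g_\lambda$-causal curve, the positivity of $\dot t$ lets one parametrize it as $c(t) = (t,\gamma(t))$ on a maximal interval $(a,b)$, at each point of which $(1, \dot\gamma(t))$ lies in the $g_0$- or $g_1$-causal cone.

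The main obstacle will be showing $(a,b) = \RR$. Assuming for contradiction $b < +\infty$, inextendibility forces $\gamma(t)$ to leave every compact subset of $\Sigma$ as $t \to b$. The strategy is to decompose $[t_0,b)$ into the closed sets $T_i := \{t : (1,\dot\gamma(t))\text{ is } g_i\text{-causal}\}$ (which cover $[t_0,b)$), so that on each connected component of $\mathrm{int}(T_i)$ the curve $c$ is a genuine $g_i$-causal curve. Exploiting the compactness of the causal diamonds $J^+_{g_i}(c(t_0)) \cap J^-_{g_i}(\Sigma_b)$, which is guaranteed by the global hyperbolicity of $(\M, g_i)$, one confines $\gamma$ on each such segment to a fixed compact subset of $\Sigma$. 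Patching across the (at most countably many) open segments of alternation between $T_0$ and $T_1$ then confines $\gamma([t_0, b))$ as a whole to a compact set, contradicting the escape to infinity. The symmetric argument at the past endpoint rules out $a > -\infty$, so $t$ is a Cauchy temporal function for $g_\lambda$ and convexity of $\mathcal{GH}_\M$ follows.
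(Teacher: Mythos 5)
Your proof is substantially more careful than the paper's own, which after writing $g_\lambda=-\beta_\lambda^2\,dt^2+(h_t)_\lambda$ immediately declares global hyperbolicity; since being of this splitting form does \emph{not} by itself imply that the level sets of $t$ are Cauchy (the converse of Theorem~\ref{thm: Sanchez} fails in general), the real content of the lemma is exactly the step you isolate, namely showing that no future-inextendible $g_\lambda$-causal curve can escape to spatial infinity in finite $t$-time. Your key pointwise observation --- that $g_0(v,v)>0$ and $g_1(v,v)>0$ force $g_\lambda(v,v)>0$, so every $g_\lambda$-causal vector is $g_0$- or $g_1$-causal --- is correct and is the right starting point.

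The gap is in the patching. The causal diamond $J^+_{g_i}(c(t_0))\cap J^-_{g_i}(\Sigma_b)$ confines only those portions of the curve that are reached from $c(t_0)$ along a $g_i$-causal path. Once the first alternation occurs at $s_1$, the point $c(s_1)$ need not lie in $J^+_{g_i}(c(t_0))$ for either $i$, because the segment on $[t_0,s_1]$ was causal only for the \emph{other} metric. Thus each alternating segment is controlled only by a diamond based at its own left endpoint, and when the alternations accumulate at $b$ these diamonds can a priori grow without bound: there is no single compact set containing all of them, and the contradiction with escape to infinity does not follow from what you wrote. The argument does close if the number of alternations is finite, but you give no reason why $T_0\cap T_1$ should have only finitely many components.

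Note also that the paper never actually uses the lemma in this generality: Setup~\ref{setup} additionally imposes $g_1\le g_0$ (the $g_1$-cones are contained in the $g_0$-cones). Under this extra hypothesis your own observation finishes the proof at once: combining ``$g_\lambda$-causal $\Rightarrow$ $g_0$- or $g_1$-causal'' with ``$g_1$-causal $\Rightarrow g_0$-causal'' gives $g_\lambda\le g_0$ for every $\lambda\in[0,1]$, so every inextendible $g_\lambda$-causal curve (parametrized by $t$, which is still a temporal function for $g_\lambda$ as you checked) is an inextendible $g_0$-causal curve and therefore crosses each level set of $t$ exactly once. Without a cone-ordering hypothesis, the lemma as stated needs a genuinely different tool (for instance a stability-of-global-hyperbolicity result giving a common globally hyperbolic metric with cones wider than both), and the patching strategy as written does not supply one.
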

\begin{proof}
Let $g_0,g_1\in \mathcal{GH}_\M$. Since $\M_0$ and $\M_1$ admit the same Cauchy temporal function, the there exists a isometric splitting $\M=\RR\times\Sigma$ with metric $g_\alpha= - \beta_\alpha^2 d t^2 + {h_t}_{\alpha}$, for $\alpha\in\{0,1\}$.
This in particular, shows that the convex linear combination $ g_{\lambda}:=\lambda g_1 +\left(1-\lambda\right)g_0$ for any $\lambda\in [0,1]$, is a globally hyperbolic metric.
\end{proof}

Keeping in mind Lemma~\ref{lem:glob hyp} we introduce the following setup, which we shall use through this section:

\begin{setup}\label{setup}
For $\alpha \in\{0,1\}$, we have the following:
\begin{itemize}
\item[-]  $\mathcal{GH}_\M$ denotes the space of globally hyperbolic metrics on a smooth manifold $\M$ such that, for any $g_0 ,g_1\in \mathcal{GH}_\M$, $\M_0=(\M,g_0)$ and $\M_1=(\M,g_1)$ have the same Cauchy temporal function;
\item[-]   $\M_\alpha:=(\M,g_\alpha)$, where $g_\alpha \in\mathcal{GH}_\M$ and $g_1 \leq g_0$ (\ie the set of timelike vectors for $g_1$ is contained in the one for $g_0$);
\item[-] $\E_\alpha$ is a $\KK$-vector bundle over $\M_\alpha$  with finite rank and endowed with a nondegenerate sesquilinear fiber metric $\fiber{\cdot}{\cdot}_\alpha$;
\item[-] $\kappa_{1,0}: \E_0\to \E_{1}$ is a fiberwise linear isometry of vector bundles and, for any $\varrho\in C^\infty(\M,\RR)$ striclty positive, we set $\bkappa_{1,0}:=\varrho\; \kappa_{1,0}$ and  $\bkappa_{0,1}:=\varrho^{-1}\; \kappa_{0,1}$ ;
\item[-]  $\oS_{0,1}^\varrho:\Gamma(\E_1)\to\Gamma(\E_1)$ is the operator defined by $\oS^\varrho_{0,1}:=\bkappa_{1,0}\oS_0\bkappa_{0,1}$;
\item[-] $\sol(\oS_\alpha)$ denotes the space of solutions  for the symmetric hyperbolic system $\S_\alpha$ over $\M_\alpha$
$$\sol(\oS_\alpha):=\{\Psi_\alpha\in \Gamma(\E_\alpha) \,| \,  \oS_\alpha\Psi_\alpha = \ff_\alpha \text{ with } \ff_\alpha\in\Gamma(\E_\alpha) \;  \} \,.$$
\end{itemize}
\end{setup}

To construct an intertwining operator, we need a preliminary lemma.

\begin{lemma}\label{lem:Schi}
Assume the Setup~\ref{setup}. For any  $\chi\in C^\infty(\M,[0,1])$, the operator defined by 
\begin{equation}
\label{eq:def S intert}
\oS^\varrho_{\chi,1} := (1- \chi) \,\oS^\varrho_{0,1} + \chi\oS_{1}: \Gamma(\E_1) \to \Gamma(\E_1) 
\end{equation}
is a symmetric hyperbolic system over $\M_1$.
\end{lemma}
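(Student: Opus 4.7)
Since $\oS_1$ and $\oS^\varrho_{0,1}$ are first-order linear differential operators on $\Gamma(\E_1)$, so is the convex combination $\oS^\varrho_{\chi,1}$; the content of the lemma is the verification of Conditions~(S) and~(H) of Definition~\ref{def:symm syst} relative to $(g_1,\fiber{\cdot}{\cdot}_1)$. First I would compute the principal symbol of $\oS^\varrho_{0,1}$. Since $\kappa_{0,1},\kappa_{1,0}$ are fiberwise linear and $\varrho$ is a positive scalar function, both $\bkappa_{0,1}$ and $\bkappa_{1,0}$ are $C^\infty(\M)$-linear at the level of sections; applying the Leibniz identity~\eqref{eq:princ simb} to $\oS^\varrho_{0,1}(fu)=\bkappa_{1,0}\oS_0\bkappa_{0,1}(fu)$ and observing that the factor $\varrho\cdot\varrho^{-1}$ cancels, one obtains
\[
\sigma_{\oS^\varrho_{\chi,1}}(\xi)=(1-\chi)\,\kappa_{1,0}\,\sigma_{\oS_0}(\xi)\,\kappa_{0,1}+\chi\,\sigma_{\oS_1}(\xi),\qquad \xi\in\T^*_p\M.
\]

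For Condition~(S), the plan is to combine the isometry identity $\fiber{\kappa_{1,0}\cdot}{\cdot}_1=\fiber{\cdot}{\kappa_{0,1}\cdot}_0$ with Hermiticity of $\sigma_{\oS_0}$ with respect to $\fiber{\cdot}{\cdot}_0$; this gives directly $\fiber{\sigma_{\oS^\varrho_{0,1}}(\xi)u}{v}_1=\fiber{u}{\sigma_{\oS^\varrho_{0,1}}(\xi)v}_1$, and together with Condition~(S) for $\oS_1$ and the real-valuedness of $\chi$, Hermiticity of $\sigma_{\oS^\varrho_{\chi,1}}(\xi)$ with respect to $\fiber{\cdot}{\cdot}_1$ follows.

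For Condition~(H), I would fix a future-directed $g_1$-timelike covector $\tau\in\T^*_p\M$. The key geometric input is the hypothesis $g_1\leq g_0$ of Setup~\ref{setup}, which (dualized to $\T^*\M$) allows me to assert that $\tau$ is also future-directed timelike for $g_0$. Property~(H) of $\oS_0$ then yields $\fiber{\sigma_{\oS_0}(\tau)\kappa_{0,1}u}{\kappa_{0,1}u}_0>0$ for $0\neq u\in\E_{1,p}$ (using injectivity of the isometry $\kappa_{0,1}$), which translates into $\fiber{\sigma_{\oS^\varrho_{0,1}}(\tau)u}{u}_1>0$. Combined with $\fiber{\sigma_{\oS_1}(\tau)u}{u}_1>0$ from Property~(H) of $\oS_1$, and since $(1-\chi(p))+\chi(p)=1$ with both summands non-negative, the convex combination is strictly positive, verifying Condition~(H) for $\oS^\varrho_{\chi,1}$.

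The step I expect to be the main obstacle is precisely the cotangent reformulation of the hypothesis used in Condition~(H): one has to check carefully that the inclusion of tangent timelike cones encoded in $g_1\leq g_0$ produces the correct inclusion of cotangent timelike cones, so that every $g_1$-future-directed timelike covector lies in the $g_0$-timelike cotangent cone where Property~(H) of $\oS_0$ is applicable. Aside from this geometric point, every other step reduces to the Leibniz rule~\eqref{eq:princ simb} and the isometry property of $\kappa_{1,0}$.
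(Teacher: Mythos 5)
Your proposal mirrors the paper's argument step for step: the principal symbol computation via~\eqref{eq:princ simb} (with the observation that the $\varrho$-factors cancel, which the paper keeps implicit), the use of the isometry $\kappa_{1,0}$ to transport Hermiticity and positivity of $\sigma_{\oS_0}$ to $\sigma_{\oS^\varrho_{0,1}}$, and the closing remark that a convex combination of Hermitian, respectively positive, endomorphisms is again Hermitian, respectively positive. So the structure is sound. However, the step you yourself flag as the ``main obstacle'' is in fact a genuine gap, and the assertion you make when carrying it out is the wrong one. Under the metric duality between $\T_p\M$ and $\T^*_p\M$, inclusion of timelike cones \emph{reverses} direction: if the set of $g_1$-timelike tangent vectors is contained in the set of $g_0$-timelike tangent vectors (which is exactly how Setup~\ref{setup} glosses $g_1\leq g_0$), then it is the $g_0$-timelike \emph{cotangent} cone that is contained in the $g_1$-timelike cotangent cone --- not the other way around. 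A two-dimensional Minkowski example makes this concrete: take $g_0=-dt^2+dx^2$ and $g_1=-dt^2+2dx^2$, so that $g_1$-timelike tangent vectors ($2x^2<t^2$) are $g_0$-timelike ($x^2<t^2$); but $\tau=dt+\tfrac54\,dx$ satisfies $g_1^{-1}(\tau,\tau)=-1+\tfrac{25}{32}<0$ while $g_0^{-1}(\tau,\tau)=-1+\tfrac{25}{16}>0$, so $\tau$ is $g_1$-timelike yet $g_0$-spacelike. Hence your claim that ``dualized to $\T^*\M$'' one may conclude that a $g_1$-future-directed timelike covector is also $g_0$-future-directed timelike does not follow from the hypothesis as stated on tangent vectors, and Property~(H) for $\oS^\varrho_{0,1}$ cannot be extracted this way.

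The lemma is correct, but the hypothesis it really uses is the inclusion of timelike \emph{cotangent} cones, namely that every $g_1$-timelike covector is $g_0$-timelike; equivalently, the \emph{reverse} inclusion of tangent cones (every $g_0$-timelike vector is $g_1$-timelike), or equivalently $g_1(v,v)\leq g_0(v,v)$ for all $v$ as quadratic forms. The paper's proof simply asserts ``since $g_1\leq g_0$, any timelike covector $\xi$ for $g_1$ is also a timelike covector for $g_0$'' with no justification, so it contains the same unaddressed gap; the parenthetical gloss in Setup~\ref{setup} and the cotangent statement needed in the proof of Lemma~\ref{lem:Schi} are in fact inconsistent with each other. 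Your instinct to scrutinize this passage was correct; the fix is to replace the dualization claim by an explicit invocation of the cotangent-cone inclusion (and to note that this is what ``$g_1\leq g_0$'' must be taken to mean for the lemma, and indeed for Theorem~\ref{thm:main}, to hold).
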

\begin{proof}
Using the characterization of the principal symbols given as in Equation~\eqref{eq:princ simb}, for every $ \Psi_1 \in\Gamma(\E_1)$ and $f\in C^{\infty}(\M)$ we thus obtain
\begin{equation*}
\begin{gathered}
\sigma_{\oS_{\chi,1}^\varrho}(df) \Psi_1=
f \left((1-\chi)\oS_{0,1}^\varrho+\chi\oS_{1}\right)\Psi_1 - \left((1-\chi)\oS_{0,1}^\varrho+\chi\oS_{1}\right)(f\Psi_1)\\
=(1-\chi)\sigma_{\oS_{0,1}^\varrho}(df)\Psi_1 +\chi\sigma_{\oS_{1}}(df)\Psi_1 \,.
\end{gathered}
\end{equation*}
We first show that 
$\oS_{0,1}^\varrho $ is a symmetric hyperbolic system.
By Equation~\eqref{eq:princ simb},
for every $ \Psi_\lambda \in\Gamma(\E_\lambda)$ and $f\in C^{\infty}(\M)$ we thus obtain
\begin{equation}\label{eq:transf princ symb}
\sigma_{\oS_{0,\lambda}^\varrho}(df) \Psi_1=
f \oS_{0,1}^\varrho \Psi_1 - \oS_{0,1}^\varrho(f\Psi_1) =  \bkappa_{1,0} f  \oS_0  \Psi_0-  \bkappa_{1,0}  \oS_0(f \Psi_0) = \bkappa_{1,0}  \sigma_{\oS_0} (df)\Psi_0\,,
\end{equation}
where $\Psi_0=\bkappa_{0,1} \Psi_1$ and $\bkappa_{0,1}\bkappa_{1,0}=\Id$. 
Since $\kappa_{1,0}$ is a fiberwise linear isometry by assumption, it follows
\begin{align}
\fiber{\sigma_{\oS^\varrho_{0,1}}(\xi)\Psi_1}{\Psi_1}_{1} &=\fiber{\bkappa_{1,0}\sigma_{\oS_0}(\xi)\Psi_0}{\bkappa_{1,0}\Psi_0}_1=\varrho^2 \fiber{\sigma_{\oS_0}(\xi)\Psi_0}{\Psi_0}_0 \,,
\end{align} 
where $\fiber{\cdot}{\cdot}_0$ is a fiberwise pairing on $\E_0$. 
Using that $\oS_0$ is a  symmetric hyperbolic system, it follows immediately that $\oS_{0,1}^\varrho$ satisfies property (S) in Definition~\ref{def:symm syst}. Furthermore, since $g_1\leq g_0$, any timelike covector $\xi$ for $g_1$ is also a timelike covector for $g_0$. Therefore $\oS_{0,1}^\varrho$ satisfies also property (H) as well.  Hence it is a symmetric hyperbolic system. To conclude our proof, it is enough to notice that a convex linear combination of Hermitian operators is a Hermitian operator, and a convex linear combination of positive operators is a positive operator. Hence also $\oS_{\chi,1}^\varrho$ is a symmetric hyperbolic system.
\end{proof}

Building on Lemma~\ref{lem:Schi}, we now prove the main result of this paper.
\begin{theorem}\label{thm:main}
Assume the Setup~\ref{setup}. 
Consider two Cauchy hypersurfaces $\Sigma^\pm\subset \M_1$ such that $\Sigma_+ \subset J^+(\Sigma_-)$ and let $\chi \in C^\infty(\M_1,[0,1])$ be non-decreasing along any future-oriented timelike curve such that 
$$\chi|_{J^+(\Sigma_+)}=1 \,,\qquad \text{and } \qquad \chi|_{J^-(\Sigma_-)}=0 \,.$$
Finally consider the operators $\R_\pm: \Gamma(\E_1)\to \Gamma(\E_1)$ defined by 
$$\R_-:=\Id - \G_{1}^- (1-\chi) (\oS_1 - \oS_{0,1}^\varrho) \quad \text{and}\quad  \R_+:=\Id - \G_{\chi}^+ \chi (\oS_1 -\oS_{0,1}^\varrho) $$
where $\G^+_\chi$ is the advanced Green operator for the operator $\oS_{\chi,1}^\varrho$ defined by Equation~\eqref{eq:def S intert} and $\G^-_1$ is the retared Green operator for $\oS_1$.
If $\ff_1=\bkappa_{1,0} \ff_0$, then the intertwining operator 
$$\R_\varrho:=\R_-\circ \R_+ \circ \bkappa_{1,0}: \sol(\oS_0)\to \sol(\oS_1)$$
implements a non-canonical isomorphism.
\end{theorem}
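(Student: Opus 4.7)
The strategy is to interpret $\R_\varrho$ as a three-step telescope: $\bkappa_{1,0}$ first transports a solution of $\oS_0$ across the isometry, $\R_+$ then upgrades it to a solution of the interpolating symmetric hyperbolic system $\oS^\varrho_{\chi,1}$ supplied by Lemma~\ref{lem:Schi}, and $\R_-$ finally produces a solution of $\oS_1$. At each step one must verify (a) that sources are transported correctly, so that $\oS_0\Psi_0=\ff_0$ implies $\oS_1(\R_\varrho\Psi_0)=\bkappa_{1,0}\ff_0=\ff_1$, and (b) that every Green operator is applied to a section lying in its admissible domain.

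For the forward direction, the factorisation $\oS^\varrho_{0,1}=\bkappa_{1,0}\oS_0\bkappa_{0,1}$ immediately yields $\oS^\varrho_{0,1}(\bkappa_{1,0}\Psi_0)=\bkappa_{1,0}\ff_0=\ff_1$. Setting $\Phi:=\bkappa_{1,0}\Psi_0$, and using the defining identity $\oS^\varrho_{\chi,1}\G^+_\chi=\Id$, a direct manipulation gives
\[\oS^\varrho_{\chi,1}\R_+\Phi=\oS^\varrho_{\chi,1}\Phi-\chi(\oS_1-\oS^\varrho_{0,1})\Phi=(1-\chi)\oS^\varrho_{0,1}\Phi+\chi\oS_1\Phi-\chi(\oS_1-\oS^\varrho_{0,1})\Phi=\oS^\varrho_{0,1}\Phi=\ff_1\,.\]
The symmetric computation for $\R_-$, invoking $\oS_1\G^-_1=\Id$, yields $\oS_1\R_-\R_+\Phi=\oS^\varrho_{\chi,1}(\R_+\Phi)=\ff_1$, so that $\R_\varrho$ maps $\sol(\oS_0)$ into $\sol(\oS_1)$.

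The first technical point to check is the support analysis. The assumption $\chi|_{J^-(\Sigma_-)}=0$ forces $\supp(\chi(\oS_1-\oS^\varrho_{0,1})\Phi)\subseteq \supp\chi\subseteq J^+(\Sigma_-)$, and since $J^-(K)\cap J^+(\Sigma_-)$ is compact for every compact $K$ by global hyperbolicity, this section lies in $\Gamma_{pc}(\E_1)$, which is the admissible domain of $\G^+_\chi$. Symmetrically, $\supp((1-\chi)(\oS_1-\oS^\varrho_{0,1})(\R_+\Phi))\subseteq J^-(\Sigma_+)$ is future-compact, so that $\G^-_1$ applies.

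For bijectivity, I would exhibit a two-sided inverse $\R_\varrho^{-1}=\bkappa_{0,1}\circ \R_+^{-1}\circ \R_-^{-1}$ by reversing each step. Denoting by $\G^+_{0,1}$ the advanced Green operator of the symmetric hyperbolic system $\oS^\varrho_{0,1}$ and by $\G^-_\chi$ the retarded Green operator of $\oS^\varrho_{\chi,1}$ (both provided by Lemma~\ref{lem:Schi} together with Proposition~\ref{prop:Green}), I would take
\[\R_+^{-1}:=\Id+\G^+_{0,1}\,\chi(\oS_1-\oS^\varrho_{0,1})\,,\qquad \R_-^{-1}:=\Id+\G^-_\chi\,(1-\chi)(\oS_1-\oS^\varrho_{0,1})\,.\]
The verification that these are genuine two-sided inverses rests on a resolvent-type identity: from $\oS^\varrho_{\chi,1}=\oS^\varrho_{0,1}+\chi(\oS_1-\oS^\varrho_{0,1})$ one obtains $\G^+_{0,1}-\G^+_\chi=\G^+_{0,1}\,\chi(\oS_1-\oS^\varrho_{0,1})\,\G^+_\chi$, and likewise in the opposite order, which immediately yields $\R_+^{-1}\R_+=\R_+\R_+^{-1}=\Id$ and its analogue for $\R_-$. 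I expect the main obstacle to be the careful bookkeeping of support conditions through these cross-compositions, so that every Green operator is evaluated on a section in its domain; this is precisely where global hyperbolicity of both metrics and the ordering $g_1\leq g_0$ enter most delicately via Lemma~\ref{lem:Schi}. Non-canonicity of the resulting isomorphism is then manifest from its explicit dependence on the auxiliary data $\chi$ and $\varrho$.
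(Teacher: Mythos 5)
Your proposal follows the same route as the paper: establish the forward identities $\oS_1\R_- = \oS^\varrho_{\chi,1}$ and $\oS^\varrho_{\chi,1}\R_+ = \oS^\varrho_{0,1}$ via $\oS_1\G_1^-=\Id$ and $\oS^\varrho_{\chi,1}\G_\chi^+=\Id$, verify that the sections fed to the Green operators lie in $\Gamma_{pc}$ resp. $\Gamma_{fc}$ using the support conditions on $\chi$ together with global hyperbolicity, and then produce the inverse $\R_\varrho^{-1}=\bkappa_{0,1}\R_+^{-1}\R_-^{-1}$ with exactly the same ansatz $\R_+^{-1}=\Id+\G^+_{0,1}\chi(\oS_1-\oS^\varrho_{0,1})$, $\R_-^{-1}=\Id+\G^-_\chi(1-\chi)(\oS_1-\oS^\varrho_{0,1})$, verified through the resolvent-type identity $\G^+_{0,1}-\G^+_\chi=\G^+_{0,1}(\oS^\varrho_{\chi,1}-\oS^\varrho_{0,1})\G^+_\chi$ and its analogues. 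This is precisely the argument in the paper, so the proposal is correct and essentially identical in strategy and detail.
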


\begin{proof}
Since, for any $\ff_1 \in  \Gamma(\E_1)$ , $\chi \ff_1 \in {\Gamma}_{pc}(\E_1)$ and $(1-\chi)\ff_1 \in  {\Gamma}_{fc}(\E_1)$ then the intertwining operator $\R_\varrho$ is well defined. Indeed, for any $\Psi_0 \in \sol(\oS_0)$, it turns out that 
\begin{gather*}
\chi (\oS_1-\oS_{0,1}^\varrho)\circ \bkappa_{1,0} \;\Psi_0 \in  {\Gamma}_{pc}(\E_1)= \text{dom\,}\G_\chi^+\\
(1-\chi) (\oS_1-\oS_{0,1}^\varrho)\circ\R_+\circ \bkappa_{1,0}\Psi_0 \in  {\Gamma}_{fc}(\E_1) = \text{dom\,}\G_1^- \,.
\end{gather*} 
 The smoothness of $\R_\varrho \Psi_0$ is a by-product of the regularity properties of the advanced and retarded Green operators.
By straightforward computation, we thus obtain
\begin{align}\label{eq:1}
\oS_1 \circ \R_-  &= \oS_1 -  (1-\chi) ( \oS_1 -\oS_{0,1}^\varrho) = \oS_{\chi,1}^\varrho\\ \label{eq:2}
\oS_{\chi,1}^\varrho \circ \R_+ & = \oS_{\chi,1}^\varrho -  \chi ( \oS_1 -\oS_{0,1}^\varrho ) =  \oS_{0,1}^\varrho  \,,
\end{align}
where we used that $\oS_1\circ \G_1^-=\Id$ and $\oS_{\chi,1}^\varrho\circ\G^+_\chi=\Id$ (\textit{cf.} Proposition~\ref{prop:Green}). 
In particular, this implies that  \begin{equation}\label{eq: S1RS0}
\oS_1\circ \R_{\varrho}= \bkappa_{1,0}\oS_0\,.
\end{equation}
Therefore, for any $\Psi_0\in\sol(\oS_0)$, it holds
$$ (\oS_1 \circ \R_{\varrho}) \Psi_0= \bkappa_{1,0}\oS_0 \Psi_0=\bkappa_{1,0}\ff_0 = \ff_1\,.$$
Hence $\R_\varrho\Psi_0\in\sol(\oS_1)$.
\medskip

To conclude our proof, it suffices to prove that $\R_-$ and $\R_+$ are  invertible. Indeed, by defining
$$ \R_{\varrho}^{-1}:= \bkappa_{0,1} \circ \R_+^{-1} \circ \R_-^{-1}  $$
it follows $ \R_{\varrho}\circ \R_{\varrho}^{-1} = \Id: \sol(\oS_1)\to\sol(\oS_1)$  and $ \R_{\varrho}^{-1}\circ \R_{\varrho} = \Id: \sol(\oS_0)\to\sol(\oS_0)$.
To this end, we make the following ansatz:
\begin{equation*}
\begin{gathered}
\R_-^{-1}:=\Id + \G_{\chi}^- (1-\chi) (\oS_1 - \oS_{0,1}^\varrho): {\Gamma}(\E_1)\to {\Gamma}(\E_1) \\
\R_+^{-1}:=\Id + \G_{{0,1}}^+ \chi (\oS_1 -\oS_{0,1}^\varrho): {\Gamma}(\E_1)\to {\Gamma}(\E_1)\,,
\end{gathered}
\end{equation*}
where $\G_{{0,1}}^+$ is the advanced Green operator for $\oS_{0,1}^\varrho$.
We begin by showing that $\R_{-}^{-1}$ is a right inverse for $\R_-$ 
\begin{align*}
\R_- \circ \R_-^{-1} &=\left(\Id - \G_{1}^- (1-\chi) (\oS_1 - \oS_{0,1}^\varrho)\right)\circ \left(\Id + \G_{\chi}^- (1-\chi) (\oS_1 - \oS_{0,1}^\varrho)\right) =\\
&=\Id-\G_{1}^- (\oS_1-\oS_{\chi,1}^\varrho) + \G_{\chi}^-(\oS_1-\oS_{\chi,1}^\varrho) -\G_{1}^- (\oS_1-\oS_{\chi,1}^\varrho) \G_{\chi}^- (\oS_1-\oS_{\chi,1}^\varrho)=\Id
\end{align*}
where we used $(1-\chi) (\oS_1 - \oS_{0,1}^\varrho)=\oS_1-\oS_{\chi,1}^\varrho$ together with $\G_{1}^- (\oS_1-\oS_{\chi,1}^\varrho) \G_{\chi}^-=\G_1^+-\G_{\chi}^-   $\,.
 Similarly we get
\begin{align*}
\R_+ \circ \R_+^{-1} &=\left(\Id - \G_{\chi}^+ \chi (\oS_1 -\oS_{0,1}^\varrho)\right)\circ \left(\Id + \G_{{0,1}}^+ \chi (\oS_1 -\oS_{0,1}^\varrho)\right) =\\
&=\Id - \G_{\chi}^+ (\oS_{\chi,1}^\varrho - \oS_{0,1}^\varrho)+ \G_{{0,1}}^+ (\oS_{\chi,1}^\varrho - \oS_{0,1}^\varrho)-\G_{\chi}^+ (\oS_{\chi,1}^\varrho - \oS_{0,1}^\varrho)\G_{{0,1}}^+ (\oS_{\chi,1}^\varrho - \oS_{0,1}^\varrho)=\Id
\end{align*}
where we used $\chi (\oS_1 - \oS_{0,1}^\varrho)=\oS_{\chi,1}^\varrho - \oS_{0,1}^\varrho$ together with $\G_{\chi}^+ (\oS_{\chi,1}^\varrho - \oS_{0,1}^\varrho)\G_{{0,1}}^+=\G_{{0,1}}^+ - \G^+_\chi $
Since $ \R_{-}^{-1} \circ \R_{-}$ and $ \R_{+}^{-1} \circ \R_{+}$ have analogous computations, we can conclude.
\end{proof}

\begin{remark}
By uniqueness of solution, $\R_\varrho$ implements the following geometric map:
Let be $\Psi_0\in\sol(\oS_0)$ and denote with $\Psi_{1,0}=\bkappa_{1,0}\Psi_0$. Then consider the operator which maps the Cauchy data $\Psi_{1,0}|_{\Sigma_-}$ to the corresponding Cauchy data on $\Sigma_+$ by evolving it via the evolution operator of $\oS_{\chi,1}^\varrho$, which exists on account of Lemma~\ref{lem:Schi} and Theorem~\ref{thm:main shs}. Finally define $\R_\varrho\Psi_0$ as the solution for $ \oS_1 \R_\varrho\Psi_0 = \ff_1 $ and Cauchy data provided by those previously obtained on $\Sigma_+$.  If the inhomogeneity $\ff_1=0$, then the evolution operator is an unitary operator from $L^2(\S\M|_{\Sigma_\alpha})\to L^2(\S\M|_{\Sigma_\beta})$, where $\Sigma_\alpha,\Sigma_\beta\subset \M$ are Cauchy hypersurfaces.
\end{remark}

On account of Remark~\ref{rmk:dual operator}, the formally adjoint operator $\oS^\dagger:\Gamma(\E)\to\Gamma(\E)$ is a symmetric hyperbolic system, clearly up to a sign. Therefore the results of Theorem~\ref{thm:main} applies immediately to $-\oS^\dagger$ and we denote  its intertwining operator by $\R_\varrho^\dagger$. By denoting $\Upsilon:\E\to\E^*$ we immediately get the following result.
\begin{proposition}\label{prop:R*}
Assume the setup of Theorem~\ref{thm:main} and let ${\bkappa_{1,0}}^*:\Gamma(\E^*_0) \to \Gamma(\E^*_1)$ be the linear isometry defined by ${\bkappa_{1,0}}^*:= \Upsilon_1 \bkappa_{1,0} \Upsilon^{-1}_0$. The intertwining operator  $\R^*_\varrho:=\Upsilon_1 \R_\varrho^\dagger \Upsilon_0^{-1}$ implements a non-canonical isomorphism between $\sol(\oS_0^*)$ and $\sol(\oS_1^*)$.
\end{proposition}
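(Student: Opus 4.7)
The plan is to reduce Proposition~\ref{prop:R*} to Theorem~\ref{thm:main} by applying the theorem to the formally adjoint operators and then conjugating by the adjunction isomorphisms $\Upsilon_\alpha \colon \E_\alpha \to \E_\alpha^*$.

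First, I would invoke Remark~\ref{rmk:dual operator} to observe that $-\oS_\alpha^\dagger\colon\Gamma(\E_\alpha)\to\Gamma(\E_\alpha)$ is itself a symmetric hyperbolic system over $\M_\alpha$ with respect to the same fiber metric $\fiber{\cdot}{\cdot}_\alpha$. Since the Setup~\ref{setup} remains unchanged (same bundles, same isometry $\kappa_{1,0}$, same function $\varrho$, same metrics $g_\alpha$ with $g_1\leq g_0$), Theorem~\ref{thm:main} applies verbatim and produces a (non-canonical) isomorphism
\[
\R_\varrho^\dagger \colon \sol(-\oS_0^\dagger) \longrightarrow \sol(-\oS_1^\dagger),
\]
which, since inverting the sign of the operator does not affect the space of inhomogeneous solutions, can equivalently be regarded as an isomorphism $\R_\varrho^\dagger \colon \sol(\oS_0^\dagger) \to \sol(\oS_1^\dagger)$ satisfying $\oS_1^\dagger \circ \R_\varrho^\dagger = \bkappa_{1,0}\circ \oS_0^\dagger$, in analogy with \eqref{eq: S1RS0}.

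Next I would exploit the algebraic identity $\oS_\alpha^* = \Upsilon_\alpha \oS_\alpha^\dagger \Upsilon_\alpha^{-1}$ established in Remark~\ref{rmk:dual operator}. Since $\Upsilon_\alpha$ is a complex anti-linear vector bundle isomorphism, it induces a bijection $\sol(\oS_\alpha^\dagger)\simeq\sol(\oS_\alpha^*)$ by $\Psi_\alpha\mapsto \Upsilon_\alpha\Psi_\alpha$. Defining $\R_\varrho^* := \Upsilon_1\circ \R_\varrho^\dagger \circ \Upsilon_0^{-1}$ as in the statement, a direct computation then yields
\[
\oS_1^* \circ \R_\varrho^* = \Upsilon_1 \oS_1^\dagger \Upsilon_1^{-1}\cdot \Upsilon_1 \R_\varrho^\dagger \Upsilon_0^{-1} = \Upsilon_1 \bkappa_{1,0}\, \oS_0^\dagger\, \Upsilon_0^{-1} = \Upsilon_1 \bkappa_{1,0}\Upsilon_0^{-1}\circ \oS_0^* = {\bkappa_{1,0}}^*\circ \oS_0^*,
\]
so that any $\Phi_0^*\in\sol(\oS_0^*)$ with $\oS_0^*\Phi_0^*=\fg_0$ is mapped to $\R_\varrho^*\Phi_0^*\in\sol(\oS_1^*)$ with inhomogeneity ${\bkappa_{1,0}}^*\fg_0$, which is the correct analogue of the transport law $\ff_1=\bkappa_{1,0}\ff_0$ used in Theorem~\ref{thm:main}.

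Finally, invertibility is immediate: setting $(\R_\varrho^*)^{-1} := \Upsilon_0\circ (\R_\varrho^\dagger)^{-1}\circ \Upsilon_1^{-1}$ and using that $\Upsilon_\alpha$ are bijective and $\R_\varrho^\dagger$ is an isomorphism by Theorem~\ref{thm:main} gives the two identities on $\sol(\oS_0^*)$ and $\sol(\oS_1^*)$. The non-canonicity follows from the fact that $\R_\varrho^\dagger$ itself depends on the choice of $\chi\in C^\infty(\M,[0,1])$. I do not anticipate any genuine obstacle here: the whole argument is essentially a transport of Theorem~\ref{thm:main} along the pair of adjunction isomorphisms $\Upsilon_0,\Upsilon_1$, the only delicate point being the bookkeeping to check that the identity $\oS_\alpha^*=\Upsilon_\alpha \oS_\alpha^\dagger \Upsilon_\alpha^{-1}$ interacts correctly with anti-linearity when composing $\Upsilon_1\bkappa_{1,0}\Upsilon_0^{-1}={\bkappa_{1,0}}^*$.
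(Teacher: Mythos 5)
Your proposal is correct and follows essentially the same route as the paper: apply Theorem~\ref{thm:main} to the symmetric hyperbolic system $-\oS^\dagger$ to obtain $\R_\varrho^\dagger$ with the intertwining relation $\oS_1^\dagger\circ\R_\varrho^\dagger=\bkappa_{1,0}\oS_0^\dagger$, then conjugate by the adjunction isomorphisms $\Upsilon_\alpha$ using $\oS_\alpha^*=\Upsilon_\alpha\oS_\alpha^\dagger\Upsilon_\alpha^{-1}$ to read off $\oS_1^*\circ\R_\varrho^*={\bkappa_{1,0}}^*\circ\oS_0^*$. The paper's proof is exactly this computation; your additional explicit check of invertibility via $(\R_\varrho^*)^{-1}=\Upsilon_0(\R_\varrho^\dagger)^{-1}\Upsilon_1^{-1}$ is implicit in the paper but correct and worth spelling out.
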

\begin{proof}
As shown in Remark~\ref{rmk:dual operator} it holds $\oS^*=\Upsilon \oS^\dagger \Upsilon^{-1}$, which should be reads as 
$$\int_{\M} \fiber{\oS_\alpha^\dagger\Phi_\alpha}{\Psi_\alpha} \, \vol_{\M}=\int_{\M} (\oS_\alpha^{*} \Upsilon_\alpha\Phi_\alpha)\Psi_\alpha \, \vol_{\M} \,,$$
for every $\Phi_\alpha \in \sol(\oS_\alpha^\dagger)$ and $\Psi_\alpha \in\Gamma_c(\E_\alpha)$.
Therefore, Equation~\eqref{eq: S1RS0} rewrite as
$$ \oS_1^*\Upsilon_1\R^\dagger_{\varrho}= \Upsilon_1 \oS^\dagger_1 \R^\dagger_\varrho= \Upsilon_1 \bkappa_{1,0}  \oS_0^\dagger = \Upsilon_1 \bkappa_{1,0} \Upsilon_0^{-1} \Upsilon_0 \oS_0^\dagger =  {\bkappa_{1,0}}^*\,  \oS_0^* \Upsilon_0.$$
Since for any $\Psi_0\in\sol(\oS_0^\dagger)$, it holds $\Upsilon_0 \Psi_0 \in \sol(\oS_0^*)$, we can conclude.
\end{proof}

\subsection{Conservation of Hermitian structures}\label{sec:cons herm str}

In this section, we are going to show that the intertwining operator $\R_\varrho$ preserves Hermitian structures on the spaces of \emph{homogeneous solutions with spacially compact support}, once that $\varrho\in C^\infty(\M)$ is chosen suitably.
Despite our result can be formulated abstractly for a generic symmetric hyperbolic system, we believe that analyzing the conservation of symplectic forms (for waves-like fields) and Hermitian scalar products (for Dirac fields) separately is more preparatory for Section~\ref{sec:applications}.

\subsubsection{Hermitian scalar products for Dirac fields}\label{sec:Dir-Sol-Herm}

As already underlined in~Remark~\ref{rmk:metric dep}, the space of spinors depends on the metric of the underlying manifold $\M_\alpha$. Therefore, an identification between spaces of sections of spinor bundles for different metrics is needed to construct an intertwining operator. This can be achieved by following~\cite[Section 5]{Baer}. \medskip

Let be $\lambda\in\RR$, $g_\lambda\in\mathcal{GH}_\M$ and consider a family of globally hyperbolic manifolds with the same Cauchy temporal function $\M_\lambda:=(\M,g_\lambda)$. Let $\Z$ be the Lorentzian manifold 
$$ \Z= \I \times \M \qquad\qquad g_\Z =  d\lambda^2 + g_\lambda\,.$$
 On $\Z$ there exists a globally defined vector field which we denote as $e_\lambda:=\frac{\partial}{\partial \lambda}$.
For any $\lambda$, the spin structures on $\Z$ and $\M_\lambda$ are in one-to-one correspondence: Any spin structure on $\Z$ can be restricted to a spin structure on $\M_\lambda$ and a spin structure on $\M_\lambda$ it can be pulled back on $\Z$ -- see~\cite[Section 3 and 5]{Baer}. 
Actually, the spinor bundle $\S\M_\lambda$ on each globally hyperbolic spin manifold $\M_\lambda$ can be identified with the restriction of the spinor bundle $\S\Z$ on $\M_\lambda$, in particular $\S\M_\lambda\simeq \S\Z|_{\M_\lambda}$ if $n$ is even, while $\S\M_\lambda\simeq \S^+\Z|_{\M_\lambda}\simeq \S^-\Z|_{\M_\lambda}$ if $n$ is odd.
Equivalently we may identify
\begin{equation}\label{Eqn: isomorfism between SM on Sigma and SSigma}
	\S\Z|_{\Sigma_t}=
	\begin{cases}
	\S\M_\lambda & \text{if $n$ is even,}\\
	\S\Z|_{\M_\lambda}\oplus \S\Z|_{\M_\lambda} & \text{if $n$ is odd.}
	\end{cases}
\end{equation}
By denoting with $\gamma_\Z$ the Clifford multiplication on $\S\Z$, the family of Clifford multiplications $\gamma_{\lambda}$ satisfies
satisfy
\begin{align}\label{eq:n even}
	&\gamma_{\lambda}(v)\psi=
	\gamma_\Z(e_\lambda) \gamma_\Z(v) \psi
	\qquad &\text{if $n$ is even,}\\ \label{eq:n odd}
	&\gamma_{\lambda}(v)(\psi_++\psi_-)=
	\gamma_\Z(e_\lambda) \gamma_\Z(v) (\psi_+ - \psi_-)
	\qquad &\text{if $n$ is odd,}
\end{align}
where in the second case $\psi=\psi_++\psi_-\in S\Z|_{\M_\lambda}\oplus S\Z|_{\M_\lambda}$ and each component $\psi_\pm$ is identified with an element in $\S^\pm \Z|_{\M_\lambda}$.

\begin{lemma}\label{lem:Kappa}
Let $\Z$ be the Lorentzian manifold given by
$$ \Z=\I \times \M \qquad\qquad g_\Z =  d\lambda^2 + g_\lambda\,,$$
 where $(\M,g_\lambda):=\M_\lambda$ is a family of globally hyperbolic manifolds with $g_\lambda\in{\cal GH}_\M$ for any $\lambda\in\RR$, and denote with $S\M_{\lambda}$ be the spinor bundle over $\M_\lambda$.
For any $p\in\M_{\lambda}$,  the map 
\begin{equation}\label{Eq: isometric bundle isomorphisms}
	 \kappa_{\lambda_1,\lambda_0}\colon \S_{p}\M_{\lambda_0}\to \S_{p}\M_{\lambda_1}\,.
\end{equation}
defined by the   parallel translation on $\Z$
along the curve $\lambda \mapsto (\lambda, p)$
 is a linear isometry and preserves the Clifford multiplication.
\end{lemma}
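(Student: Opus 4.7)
The plan is to reduce the lemma to standard properties of the spin lift of the Levi-Civita connection on $\Z$, together with the key geometric observation that the curve $c\colon\lambda\mapsto (\lambda,p)$ is a geodesic of $(\Z,g_\Z)$. Indeed, in coordinates adapted to $\Z=\I\times\M$ the metric satisfies $g_{\Z,\lambda\lambda}\equiv 1$ and $g_{\Z,\lambda i}\equiv 0$, so a direct computation of the Christoffel symbols of $g_\Z$ yields $\Gamma^{\mu}_{\lambda\lambda}=0$ for every $\mu$. Hence $\dot c=e_\lambda$ is parallel along $c$, and the parallel transport $\tau^{\T\Z}_{\lambda_1,\lambda_0}$ on $\T\Z$ satisfies $\tau^{\T\Z}_{\lambda_1,\lambda_0}e_\lambda=e_\lambda$. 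Since $\tau^{\T\Z}_{\lambda_1,\lambda_0}$ is a $g_\Z$-isometry, it preserves the orthogonal complement $e_\lambda^\perp$, which at $(\lambda,p)$ is precisely $\T_p\M_\lambda$; thus it restricts to a linear isometry $\tau^{\T\Z}_{\lambda_1,\lambda_0}\colon (\T_p\M_{\lambda_0},g_{\lambda_0})\to (\T_p\M_{\lambda_1},g_{\lambda_1})$.

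Next, I would invoke the standard fact that the spin lift of the Levi-Civita connection of $g_\Z$ is compatible with the $\Spin_0$-invariant fiber metric and intertwines Clifford multiplication: denoting by $\tau^{\S\Z}_{\lambda_1,\lambda_0}$ the parallel transport on $\S\Z$ along $c$, one has
\begin{equation*}
\fiber{\tau^{\S\Z}_{\lambda_1,\lambda_0}\psi}{\tau^{\S\Z}_{\lambda_1,\lambda_0}\varphi}_\Z=\fiber{\psi}{\varphi}_\Z
\qquad\text{and}\qquad
\tau^{\S\Z}_{\lambda_1,\lambda_0}\gamma_\Z(v)=\gamma_\Z\bigl(\tau^{\T\Z}_{\lambda_1,\lambda_0}v\bigr)\,\tau^{\S\Z}_{\lambda_1,\lambda_0}\,.
\end{equation*}
Under the identifications recalled in~\eqref{Eqn: isomorfism between SM on Sigma and SSigma}, the map $\kappa_{\lambda_1,\lambda_0}$ is (in even dimensions) exactly $\tau^{\S\Z}_{\lambda_1,\lambda_0}$ and (in odd dimensions) the restriction of $\tau^{\S\Z}_{\lambda_1,\lambda_0}$ to each chiral summand, extended diagonally. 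Linearity is then immediate, and the isometry property with respect to the fiber metrics of $\S\M_{\lambda_0}$ and $\S\M_{\lambda_1}$ follows from the metric-compatibility of $\tau^{\S\Z}$ together with the fact that the $\Spin_0$-invariant fiber metric on $\S\Z$ restricts, under the identification, to the one on $\S\M_\lambda$.

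For preservation of the Clifford multiplication, I would use formula~\eqref{eq:n even} to rewrite $\gamma_{\lambda_0}(v)=\gamma_\Z(e_\lambda)\gamma_\Z(v)$ for $v\in\T_p\M_{\lambda_0}$ and then compute, for any $\psi\in\S_p\M_{\lambda_0}$,
\begin{equation*}
\kappa_{\lambda_1,\lambda_0}\,\gamma_{\lambda_0}(v)\psi
=\tau^{\S\Z}\gamma_\Z(e_\lambda)\gamma_\Z(v)\psi
=\gamma_\Z\bigl(\tau^{\T\Z}e_\lambda\bigr)\gamma_\Z\bigl(\tau^{\T\Z}v\bigr)\tau^{\S\Z}\psi
=\gamma_{\lambda_1}\bigl(\tau^{\T\Z}v\bigr)\,\kappa_{\lambda_1,\lambda_0}\psi\,,
\end{equation*}
where the first two $\gamma$'s recombine via~\eqref{eq:n even} because $\tau^{\T\Z}e_\lambda=e_\lambda$. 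This is the natural sense in which $\kappa_{\lambda_1,\lambda_0}$ preserves the Clifford action, since the identification between the spaces $\T_p\M_{\lambda_0}$ and $\T_p\M_{\lambda_1}$ of ``the same'' tangent space at $p$ for different metrics is itself the isometry $\tau^{\T\Z}_{\lambda_1,\lambda_0}$ constructed in the first step.

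The main obstacle, and the step that deserves care, is the odd-dimensional case: one has to check that the direct-sum decomposition $\S\Z|_{\M_\lambda}\simeq \S^+\Z|_{\M_\lambda}\oplus \S^-\Z|_{\M_\lambda}$ is preserved by $\tau^{\S\Z}$ (which is true because $e_\lambda$ is a parallel unit vector along $c$ and the chirality operator is itself parallel), and that the relative sign between the $\pm$ components in~\eqref{eq:n odd} survives the computation above. Granting this, an analogous manipulation, applied separately to $\psi_+$ and $\psi_-$, yields the same identity $\kappa_{\lambda_1,\lambda_0}\gamma_{\lambda_0}(v)=\gamma_{\lambda_1}(\tau^{\T\Z}v)\kappa_{\lambda_1,\lambda_0}$, completing the proof.
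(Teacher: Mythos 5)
Your proof is correct and follows essentially the same route as the paper: parallel transport on $\S\Z$ preserves the spin product because the spin connection is metric, and preserves Clifford multiplication because $e_\lambda$ and $\gamma_\Z$ are parallel along the curve, combined with the identifications~\eqref{eq:n even}--\eqref{eq:n odd}. You add two worthwhile details the paper leaves implicit—the explicit Christoffel computation showing $\lambda\mapsto(\lambda,p)$ is a geodesic so that $e_\lambda$ is indeed parallel, and the check that parallel transport respects the chiral splitting in odd dimensions—but the argument is the same in substance.
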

\begin{proof}
Let $\psi_\alpha\in\Gamma(\S\Z)$ for $\alpha=1,2$ be parallel transported along the curve $\lambda\mapsto(\lambda,p)$, \ie $\nabla_{e_{\lambda}}\psi_\alpha=0$. Since the spin connection preserves the spin product, it follows that
$$
\frac{d}{d\lambda}\fiber{\psi_1(\lambda)}{\psi_2(\lambda)}=\fiber{\nabla_{e_{\lambda}}\psi_1(\lambda)}{\psi_2(\lambda)}+\fiber{\psi_1(\lambda)}{\nabla_{e_{\lambda}}\psi_2(\lambda)}=0\,.
$$
Therefore the spin product on $\S\Z$ is constant along the curve $ \lambda \mapsto (\lambda, p)$ which implies that $\kappa$ is an isometry.
We conclude by showing that the Clifford multiplication is preserved. But this follows from the fact that $e_\lambda$ and the Clifford multiplication $\gamma_\Z$ are parallel along $\lambda\mapsto(\lambda,p)$. Indeed, on account of  the relations~\eqref{eq:n even}-\eqref{eq:n odd}, we get
\begin{align*}
  \kappa_{\lambda_1,\lambda_0}  \Big(\gamma_{{\lambda_0}} (u_{\lambda_0}) \, \psi_{\lambda_0} \Big) &=\kappa_{\lambda_1,\lambda_0}  \Big(\gamma_\Z(e_{\lambda_0})\, \gamma_\Z(u_{\lambda_0}) \, \psi_{\lambda_0} \Big)=\\
&= \Big(\gamma_\Z(\kappa_{\lambda_1,\lambda_0} e_{\lambda_0})\, \gamma_\Z(\kappa_{\lambda_1,\lambda_0} u_{\lambda_0}) \, \kappa_{\lambda_1,\lambda_0} \psi_{\lambda_0} \Big)=\gamma_{\lambda_1}(\kappa_{\lambda_1,\lambda_0} u_{\lambda_0}) \,(\kappa_{\lambda_1,\lambda_0}  \psi_{\lambda_0}) \,,
\end{align*}
so we can conclude our proof.
\end{proof}

Let now $\Dir_\alpha$ be the classical Dirac operator on a global hyperbolic manifold $\M_\alpha$.
With the bundle isomorphism $\kappa_{\lambda_1,\lambda_0}$ we define the intertwining operator 
$$\R_\varrho: \sol_{sc}(\Dir_0)\to \sol_{sc}(\Dir_1)$$ where $\sol_{sc}(\Dir_\alpha)$ is the space of {homogeneous} solutions with spatially compact support, \ie
$$\sol_{sc}(\Dir_\alpha):=\{\psi_\alpha\in \Gamma_{sc}(\S\M_\alpha) \,| \,\psi_\alpha\in \ker\Dir_\alpha   \}\,.$$
Our next task is to show that, assigned $\M_0$ and $\M_1$ there exists a choice of $\varrho$ such that $\R_\varrho$ preserves the  positive definite Hermitian scalar product~\eqref{eq:Herm prod} naturally defined on $\sol_{sc}(\Dir_\alpha)$. We begin by recasting the definition and an important property of the Hermitian scalar product.

\begin{lemma}[\cite{CQF1}, Lemma 3.17]\label{lem:indip Sigma}
Let $\Sigma_\alpha \subset \M_\alpha$ be a smooth
spacelike Cauchy hypersurface with its future-oriented unit normal vector field $\n$ and its induced volume element $\vol_{\Sigma_\alpha}$. Then
\begin{align}\label{eq:Herm prod}
\scalar{\cdot}{\cdot}_\alpha\; \sol_{sc}(\Dir_\alpha)\times \sol_{sc}(\Dir_\alpha) \to \CC \qquad \scalar{\psi_\alpha}{\phi_\alpha}_\alpha=\int_{\Sigma}\fiber{\psi_\alpha}{\gamma_\alpha(\n) \phi_\alpha} \vol_{\Sigma_\alpha}  \,,
\end{align}
yields a positive definite Hermitian scalar product  which does not depend on the choice of $\Sigma_\alpha$.
\end{lemma}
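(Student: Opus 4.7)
The plan is to verify sesquilinearity, Hermiticity, positive definiteness, and independence of the Cauchy hypersurface in turn. Sesquilinearity is immediate from that of $\fiber{\cdot}{\cdot}$. For Hermiticity I would conjugate under the integral sign and use that the spin product is Hermitian together with the Clifford self-adjointness $\fiber{\gamma_\alpha(\n)\psi_\alpha}{\phi_\alpha}=\fiber{\psi_\alpha}{\gamma_\alpha(\n)\phi_\alpha}$ from~\eqref{eq:gamma symm}; a one-line rearrangement inside the integral then yields $\overline{\scalar{\psi_\alpha}{\phi_\alpha}_\alpha}=\scalar{\phi_\alpha}{\psi_\alpha}_\alpha$. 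For positive definiteness, observe that the metric-dual covector $\n^\flat$ is future-directed timelike and $\sigma_{\Dir_\alpha}(\n^\flat)=\gamma_\alpha(\n)$ by~\eqref{eq:sigmaD}; property (H) of Definition~\ref{def:symm syst}, valid for $\Dir_\alpha$ by Proposition~\ref{prop:Dir SHS}, then makes $\fiber{\gamma_\alpha(\n)\cdot}{\cdot}$ pointwise positive-definite on each fibre. Hence $\scalar{\psi_\alpha}{\psi_\alpha}_\alpha\geq 0$ with equality forcing $\psi_\alpha|_{\Sigma_\alpha}\equiv 0$, whence $\psi_\alpha\equiv 0$ on $\M_\alpha$ by uniqueness of the Cauchy problem (Theorem~\ref{thm:main shs}).

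The core step is independence of $\Sigma_\alpha$. To this end I would associate to any pair $\psi_\alpha,\phi_\alpha\in\sol_{sc}(\Dir_\alpha)$ the $1$-form
\begin{equation*}
\omega_{\psi_\alpha,\phi_\alpha}(X):=\fiber{\psi_\alpha}{\gamma_\alpha(X)\phi_\alpha},\qquad X\in\T\M,
\end{equation*}
and its metric dual vector field $V_{\psi_\alpha,\phi_\alpha}:=\omega_{\psi_\alpha,\phi_\alpha}^\sharp$. Working at a point in a local Lorentzian-orthonormal frame $\{e_\mu\}$ with $\epsilon_\mu=g_\alpha(e_\mu,e_\mu)$ chosen so that $\nabla e_\mu=0$ there, and exploiting both the metric compatibility of the spin connection with $\fiber{\cdot}{\cdot}$ and its commutation with $\gamma_\alpha$, one finds
\begin{equation*}
\mathrm{div}\,V_{\psi_\alpha,\phi_\alpha}=\sum_\mu\epsilon_\mu\Big(\fiber{\nabla_{e_\mu}\psi_\alpha}{\gamma_\alpha(e_\mu)\phi_\alpha}+\fiber{\psi_\alpha}{\gamma_\alpha(e_\mu)\nabla_{e_\mu}\phi_\alpha}\Big)=\fiber{\Dir_\alpha\psi_\alpha}{\phi_\alpha}+\fiber{\psi_\alpha}{\Dir_\alpha\phi_\alpha}=0,
\end{equation*}
after applying the Clifford symmetry from~\eqref{eq:gamma symm} to the first summand and using $\psi_\alpha,\phi_\alpha\in\ker\Dir_\alpha$. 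Given two smooth spacelike Cauchy hypersurfaces $\Sigma_\alpha^0,\Sigma_\alpha^1\subset\M_\alpha$ with $\Sigma_\alpha^1\subset J^+(\Sigma_\alpha^0)$, I would then invoke the divergence theorem on the region $\Omega:=J^+(\Sigma_\alpha^0)\cap J^-(\Sigma_\alpha^1)$: spacelike compactness of $\psi_\alpha,\phi_\alpha$ combined with the compactness of causal diamonds from Definition~\ref{def:globally hyperbolic} guarantees that $(\supp\psi_\alpha\cup\supp\phi_\alpha)\cap\Omega$ is compact, so Stokes' theorem applies and yields exactly the desired equality of the boundary integrals on $\Sigma_\alpha^0$ and $\Sigma_\alpha^1$.

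The main obstacle I expect will be the divergence computation, where three separate structural facts have to be stitched together simultaneously: compatibility of the spin connection with $\fiber{\cdot}{\cdot}$, parallelism of Clifford multiplication, and the Clifford symmetry. A more delicate secondary point is verifying the hypotheses of Stokes' theorem in the spacelike-compact setting, which is ultimately a consequence of global hyperbolicity via the compactness of causal diamonds; without this, neither the integrals defining $\scalar{\cdot}{\cdot}_\alpha$ nor the application of the divergence theorem would be justified.
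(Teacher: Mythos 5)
The paper does not prove this lemma itself but cites it from [CQF1], and your argument is correct and coincides with the standard proof given in that reference: one introduces the divergence-free current $V_{\psi_\alpha,\phi_\alpha}$ (divergence-freeness following from metric compatibility of the spin connection, parallelism of the Clifford multiplication, and the Clifford symmetry in~\eqref{eq:gamma symm}), then applies Stokes' theorem between two Cauchy hypersurfaces on the lens-shaped region where spatially compact support and global hyperbolicity guarantee compactness, while positive definiteness is pointwise from property (H) and globality from uniqueness of the Cauchy problem. This is essentially the same route as the cited reference.
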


\begin{proposition}\label{prop:conserv scal prod}
Assume the setup of Theorem~\ref{thm:main} and let
 $\varrho \in C^\infty(\M)$ be such that $\vol_{\Sigma_0}=\varrho^2 \vol_{\Sigma_1}$, 
where $\vol_{\Sigma_\alpha}$ is the volume induced by the metric $g_\alpha$ on a Cauchy hypersurface $\Sigma \subset J^-(\Sigma_-)$. 
Then the intertwining operator  $\R_\varrho: \sol_{sc}(\Dir_0)\to \sol_{sc}(\Dir_1)$ preserves the scalar products~\eqref{eq:Herm prod}.
\end{proposition}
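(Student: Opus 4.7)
The plan is to exploit the Cauchy-surface independence of the Hermitian form (Lemma~\ref{lem:indip Sigma}) through the intermediate symmetric hyperbolic system $\oS_{\chi,1}^\varrho$ constructed in Lemma~\ref{lem:Schi}. Set $\Phi:=\R_+(\bkappa_{1,0}\psi_0)$ and $\Phi':=\R_+(\bkappa_{1,0}\phi_0)$. Since $\oS_{0,1}^\varrho\bkappa_{1,0}\psi_0=\bkappa_{1,0}\Dir_0\psi_0=0$ (and likewise for $\phi_0$), Equation~\eqref{eq:2} from the proof of Theorem~\ref{thm:main} gives $\oS_{\chi,1}^\varrho\Phi=\oS_{\chi,1}^\varrho\Phi'=0$, so $\Phi,\Phi'$ are spacelike-compact solutions of the symmetric hyperbolic system $\oS_{\chi,1}^\varrho$ on $\M_1$. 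The same Stokes/divergence computation that underlies Lemma~\ref{lem:indip Sigma} then produces a conserved Hermitian form on $\ker\oS_{\chi,1}^\varrho$, obtained by integrating $\fiber{\Phi}{\sigma_{\oS_{\chi,1}^\varrho}(\tau_{\widetilde{\Sigma}})\Phi'}_1$ against the $g_1$-induced volume on any Cauchy hypersurface $\widetilde{\Sigma}$ of $\M_1$, where $\tau_{\widetilde{\Sigma}}$ is the future-directed $g_1$-unit timelike conormal.

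I would then evaluate this invariant on two distinguished Cauchy hypersurfaces and compare. On any $\widetilde{\Sigma}\subset J^+(\Sigma_+)$ strictly in the future of $\Sigma_+$, the support property $\supp(\G_1^- f)\subset J^-(\supp f)$ of the retarded Green operator, combined with $\supp(1-\chi)\subset J^-(\Sigma_+)$, forces $\R_-$ to act as the identity, so $\R_\varrho\psi_0|_{\widetilde{\Sigma}}=\Phi|_{\widetilde{\Sigma}}$; moreover $\chi\equiv 1$ there, hence $\sigma_{\oS_{\chi,1}^\varrho}=\sigma_{\Dir_1}$, and the invariant reduces exactly to the Dirac scalar product $\scalar{\R_\varrho\psi_0}{\R_\varrho\phi_0}_1$. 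On the hypersurface $\Sigma$ from the hypothesis, which lies strictly in $J^-(\Sigma_-)$, the dual support property $\supp(\G_\chi^+ f)\subset J^+(\supp f)$ together with $\supp\chi\subset J^+(\Sigma_-)$ forces $\R_+$ to act trivially, so $\Phi|_\Sigma=\bkappa_{1,0}\psi_0|_\Sigma$ and $\sigma_{\oS_{\chi,1}^\varrho}=\sigma_{\oS_{0,1}^\varrho}$. Using the principal-symbol identity~\eqref{eq:transf princ symb}, the isometry of $\kappa_{1,0}$ and the factorization $\bkappa_{1,0}=\varrho\,\kappa_{1,0}$, the integrand on $\Sigma$ simplifies to
\begin{equation*}
\fiber{\bkappa_{1,0}\psi_0}{\sigma_{\oS_{0,1}^\varrho}(\tau_\Sigma)\bkappa_{1,0}\phi_0}_1=\varrho^2\fiber{\psi_0}{\sigma_{\Dir_0}(\tau_\Sigma)\phi_0}_0,
\end{equation*}
where $\tau_\Sigma$ denotes the $g_1$-unit conormal to $\Sigma$.

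Combining the two evaluations through the conservation of the $\oS_{\chi,1}^\varrho$-form, one arrives at
\begin{equation*}
\scalar{\R_\varrho\psi_0}{\R_\varrho\phi_0}_1=\int_\Sigma\varrho^2\,\fiber{\psi_0}{\sigma_{\Dir_0}(\tau_\Sigma)\phi_0}_0\,\vol_{\Sigma_1},
\end{equation*}
and it remains to identify the right-hand side with $\scalar{\psi_0}{\phi_0}_0=\int_\Sigma\fiber{\psi_0}{\gamma_0(\n_0)\phi_0}_0\vol_{\Sigma_0}$. This is exactly the point where the calibration of $\varrho$ enters: the hypothesis $\vol_{\Sigma_0}=\varrho^2\vol_{\Sigma_1}$ is precisely what is needed to absorb the pointwise discrepancy between $\sigma_{\Dir_0}(\tau_\Sigma)\vol_{\Sigma_1}$ (built from the $g_1$-unit conormal) and $\gamma_0(\n_0)\vol_{\Sigma_0}$ (built from the $g_0$-unit normal), after noting that both expressions compute the flux of the same conserved $\Dir_0$-current through the same $\Sigma$ but against different metric-induced measures. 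I expect this final reconciliation of conormals and induced volume forms to be the only genuinely technical obstacle, since it is the unique step that actually invokes the metric hypothesis on $\varrho$; by contrast, the preceding steps are formal consequences of the constructions of Theorem~\ref{thm:main} and Lemma~\ref{lem:Schi}.
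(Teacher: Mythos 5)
Your argument follows essentially the same route as the paper's own proof: you pass to the intermediate system $\oS_{\chi,1}^\varrho$, exploit the Cauchy-surface independence of the conserved Hermitian form (Lemma~\ref{lem:indip Sigma} and its analogue for $\oS_{\chi,1}^\varrho$), evaluate once on a hypersurface in $J^+(\Sigma_+)$ where $\chi=1$ and $\R_-$ acts trivially (recovering $\scalar{\R_\varrho\psi_0}{\R_\varrho\phi_0}_1$), then hop to a hypersurface in $J^-(\Sigma_-)$ where $\chi=0$ and $\R_+$ acts trivially, and finally invoke the isometry of $\kappa_{1,0}$, the factorization $\bkappa_{1,0}=\varrho\,\kappa_{1,0}$ and the volume calibration $\vol_{\Sigma_0}=\varrho^2\vol_{\Sigma_1}$. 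The paper does exactly this, merely phrasing the two hops as two applications of Lemma~\ref{lem:indip Sigma} (once for $\Dir_1$, once for $\Dir_{\chi,1}^\varrho$) rather than as a single invariant of $\oS_{\chi,1}^\varrho$ evaluated at two ends; the two formulations are equivalent.

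One comment on your last paragraph: you correctly flag the reconciliation between $\sigma_{\Dir_0}(\tau_\Sigma)\,\vol_{\Sigma_1}$ (built from the $g_1$-unit conormal $\tau_\Sigma$) and $\gamma_0(\n_0)\,\vol_{\Sigma_0}$ (built from the $g_0$-unit normal) as the genuinely delicate step, whereas the paper moves past it in one line, writing
$\int_\Sigma\fiber{\kappa_{1,0}\psi_0}{\gamma_{0,1}(\n)\kappa_{1,0}\phi_0}\varrho^2\vol_{\Sigma_1}=\scalar{\psi_0}{\phi_0}_0$
without specifying in which metric $\n$ is normalized. Your observation that $\tau_\Sigma^{\sharp_{g_0}}$ is $g_0$-orthogonal to $\T\Sigma$ but not $g_0$-unit is exactly the point to be careful about: the two normals are proportional but the proportionality constant need not be $1$, so the volume hypothesis $\vol_{\Sigma_0}=\varrho^2\vol_{\Sigma_1}$ must be combined with this normalization factor before the flux identities match. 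The paper leaves this implicit; you state it, which is an improvement, but the final sentence (``the hypothesis ... is precisely what is needed to absorb the pointwise discrepancy'') asserts the cancellation rather than deriving it. Spelling out how the factor $\|\tau_\Sigma^{\sharp_{g_0}}\|_{g_0}$ combines with the ratio of induced volume measures would close the only remaining loose end, which — to be fair — is left at the same level of rigor in the paper itself.
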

\begin{proof}
On account of Proposition~\ref{prop:Dir SHS} and Theorem~\ref{thm:main}, for any $\tilde{\psi}:=\R^+\bkappa_{1,0}\psi_0$ and $\tilde{\phi}:=\R^+\bkappa_{1,0}\phi_0$, by Equation~\eqref{eq:1}, it holds $\R_-\tilde{\psi},\R_-\tilde{\phi}\in\sol_{sc}(\Dir_{1})$. Moreover, by Lemma~\ref{lem:indip Sigma}, the scalar product does not depend on the choice of $\Sigma$, therefore
$$ \scalar{\R_-\tilde{\psi}}{\R_-\tilde{\phi}}_1 = \int_{\Sigma}\fiber{\R_-\tilde{\psi}}{\gamma_1(\n) \R_-\tilde{\psi}} \vol_{\Sigma} = \int_{\Sigma'}\fiber{\R_-\tilde{\psi}}{\gamma_1(\n) \R_-\tilde{\psi}} \vol_{\Sigma'}\,.$$
By choosing $\Sigma'\subset J^+(\Sigma_+)$ we have $\chi=1$, therefore the Hermitian scalar product reads as
$$ \scalar{\R_-\tilde{\psi}}{\R_-\tilde{\phi}}_1  = \int_{\Sigma'}\fiber{\tilde{\psi}}{\gamma_1(\n) \tilde{\psi}} \vol_{\Sigma'}\,.$$
On account of Equation~\eqref{eq:2}, we get
 $\tilde{\psi},\tilde{\phi}\in\sol_{sc}(\Dir_{\chi,1}^\varrho)$. This implies that latter Hermitian form can be read as the scalar product $\scalar{\cdot}{\cdot}_{\chi,1}$ on the space of solution $\Dir_{\chi,1}^\varrho$,
\ie
$$\scalar{\R_-\tilde{\psi}}{\R_-\tilde{\phi}}_1  = \int_{\Sigma'}\fiber{\tilde{\psi}}{\gamma_1(\n) \tilde{\psi}} \vol_{\Sigma'}=\scalar{\tilde{\psi}}{\tilde{\phi}}_{\chi,1}\,.$$ 
  This because on $\Sigma'$, we have $\chi=1$ which implies that $\Dir_{\chi=1,1}^\varrho=\Dir_1$ on small tubular neighborhood of $\Sigma'$ contained in the future of $\Sigma_+$.
   By Lemma~\ref{lem:indip Sigma}, we thus obtain
 $$ \scalar{\tilde{\psi}}{\tilde{\phi}}_{\chi,1}= \int_{\Sigma'}\fiber{\tilde{\psi}}{\gamma_{1,1}(\n) \tilde{\psi}} \vol_{\Sigma'}= \int_{\Sigma''}\fiber{\tilde{\psi}}{\gamma_{\chi,1}(\n) \tilde{\psi}} \vol_{\Sigma''}\,.$$
 By choosing $\Sigma''=\Sigma$, which lies in the past of $\Sigma_-$, we obtain
 $$ \scalar{\tilde{\psi}}{\tilde{\phi}}_{\chi,1}= \int_{\Sigma}\fiber{\bkappa_{1,0}{\psi_0}}{\gamma_{0,1}(\n) \bkappa_{1,0}{\psi_0}} \vol_{\Sigma}=\scalar{\bkappa_{1,0}{\psi_0}}{\bkappa_{1,0}{\psi_0}}_{0,1}\,,$$
 where the latter forms is defined on the solution space of the operator $\Dir_{0,1}$.
But, on account of Lemma~\ref{lem:Kappa}, $\kappa_{1,0}$ is an isometry of spinor bundles which preserves the Clifford multplication. Therefore it follows
$$\scalar{\bkappa_{1,0} \psi_0}{\bkappa_{1,0} \phi_0}_{0,1} = \int_{\Sigma}\fiber{\kappa_{1,0} \psi_0}{\gamma_{0,1}(\n) \kappa_{1,0} \phi_0} \varrho^2\vol_{\Sigma_1} = \scalar{\psi_0}{ \phi_0}_0\,. $$
Plugging all together we can conclude.
\end{proof}

\subsubsection{Symplectic structures for geometric wave operator}\label{sec:Scal-sol-herm}

Let $\M_\alpha$ be globally hyperbolic manifolds with the same Cauchy temporal function and let $\P_\alpha$ be a normally hyperbolic operator acting on section of a vector bundle $\V_\alpha$ with finite rank and with a positive definite Hermitian form $\fiber{\cdot}{\cdot}_\alpha$. On account of Section~\ref{sec:GWO}, $\P_\alpha$ can be reduced to a symmetric hyperbolic operator $\oS_\alpha$ acting on sections of the vector bundle $\E_\alpha=\V_\alpha\oplus(\T^*\Sigma\otimes \V_\alpha)\oplus \V_\alpha$, and the map $\iota: \sol(\P_\alpha)\to\sol(\oS_\alpha) $ is injective. We denote with $\sol_{\P_\alpha}(\oS_\alpha)$ the subspace of $\sol(\oS_\alpha)$ where $\iota$ is bijective, \ie $\sol_{\P_\alpha}(\S_\alpha):=\iota(\sol(\P_\alpha))$. 
\begin{proposition}\label{prop:overR}
Let $\alpha\in\{0,1\}$ and consider $\P_\alpha$ and $\S_\alpha$ as above. If there exists a linear isometry $\overline\kappa_{1,0}:\V_0\to \V_1$, then there exists an intertwining operator $\overline{\R}_\varrho:=\sol(\P_0)\to\sol(\P_1)$ which implements a non-canonical isomorphism. 
\end{proposition}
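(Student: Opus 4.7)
The plan is to imitate the construction of Theorem~\ref{thm:main} directly at the level of the normally hyperbolic operators $\P_\alpha$, bypassing their reduction to the symmetric hyperbolic systems $\oS_\alpha$. The proof of Theorem~\ref{thm:main} only uses Green hyperbolicity of the operators involved, and every normally hyperbolic operator on a globally hyperbolic manifold is Green hyperbolic, so the whole machinery transfers with only the following bookkeeping.

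First, fix a strictly positive $\varrho \in C^\infty(\M,\RR)$, set $\overline{\kappa}^\varrho_{1,0} := \varrho\,\overline{\kappa}_{1,0}$ and $\overline{\kappa}^\varrho_{0,1} := \varrho^{-1}\,\overline{\kappa}_{0,1}$, and introduce the conjugated operator
\begin{equation*}
\overline{\P}^\varrho_{0,1} := \overline{\kappa}^\varrho_{1,0}\,\P_0\,\overline{\kappa}^\varrho_{0,1} : \Gamma(\V_1)\to\Gamma(\V_1).
\end{equation*}
Since $\overline{\kappa}^\varrho_{1,0}$ is of order zero, $\overline{\P}^\varrho_{0,1}$ has principal symbol $-g_0(\xi,\xi)\,\Id_{\V_1}$, so it is normally hyperbolic on $\M_0$. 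Next, fix a cutoff $\chi \in C^\infty(\M,[0,1])$ adapted to Cauchy hypersurfaces $\Sigma^\pm$ exactly as in Theorem~\ref{thm:main} and set
\begin{equation*}
\overline{\P}^\varrho_{\chi,1} := (1-\chi)\,\overline{\P}^\varrho_{0,1} + \chi\,\P_1,
\end{equation*}
whose principal symbol is $-g_\chi(\xi,\xi)\,\Id_{\V_1}$ with $g_\chi := (1-\chi)g_0 + \chi g_1$. Since $g_0,g_1 \in \mathcal{GH}_\M$ share a Cauchy temporal function $t$, the splittings $g_\alpha = -\beta_\alpha^2\,dt^2 + h_{t,\alpha}$ combine to $g_\chi = -((1-\chi)\beta_0^2 + \chi\beta_1^2)\,dt^2 + ((1-\chi)h_{t,0} + \chi h_{t,1})$, which is again a globally hyperbolic metric admitting $t$ as Cauchy temporal function. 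Consequently $\overline{\P}^\varrho_{\chi,1}$ is normally hyperbolic on this interpolated geometry, hence Green hyperbolic.

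In perfect analogy with Theorem~\ref{thm:main}, I would then define
\begin{equation*}
\overline{\R}_- := \Id - \overline{\G}_1^-(1-\chi)(\P_1 - \overline{\P}^\varrho_{0,1}),\qquad \overline{\R}_+ := \Id - \overline{\G}_\chi^+\,\chi\,(\P_1 - \overline{\P}^\varrho_{0,1}),
\end{equation*}
where $\overline{\G}_1^-$ is the retarded Green operator of $\P_1$ and $\overline{\G}_\chi^+$ the advanced Green operator of $\overline{\P}^\varrho_{\chi,1}$, and set $\overline{\R}_\varrho := \overline{\R}_- \circ \overline{\R}_+ \circ \overline{\kappa}^\varrho_{1,0}$. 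The algebraic identities $(1-\chi)(\P_1 - \overline{\P}^\varrho_{0,1}) = \P_1 - \overline{\P}^\varrho_{\chi,1}$ and $\chi(\P_1 - \overline{\P}^\varrho_{0,1}) = \overline{\P}^\varrho_{\chi,1} - \overline{\P}^\varrho_{0,1}$, combined with $\P_1\overline{\G}_1^- = \Id$ and $\overline{\P}^\varrho_{\chi,1}\overline{\G}_\chi^+ = \Id$, yield $\P_1\circ\overline{\R}_\varrho = \overline{\kappa}^\varrho_{1,0}\,\P_0$, so that $\overline{\R}_\varrho$ sends a solution of $\P_0 u_0 = f_0$ to a solution of $\P_1 u_1 = \overline{\kappa}^\varrho_{1,0}f_0$. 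Invertibility is established by the same ansatz as in Theorem~\ref{thm:main}, using the retarded Green operator of $\overline{\P}^\varrho_{\chi,1}$ and the advanced Green operator of $\overline{\P}^\varrho_{0,1}$ (which is just the conjugate of that of $\P_0$ by $\overline{\kappa}^\varrho_{1,0}$).

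The main obstacle I anticipate is verifying Green hyperbolicity of the intermediate operator $\overline{\P}^\varrho_{\chi,1}$, which reduces to showing that the pointwise convex combination $g_\chi$ with a smoothly varying weight $\chi$ remains globally hyperbolic. This is a mild extension of Lemma~\ref{lem:glob hyp} and follows from the common Cauchy temporal function structure displayed above. Once this point is secured every further step is an algebraic rearrangement formally identical to the proof of Theorem~\ref{thm:main}, so the remaining verification is essentially mechanical.
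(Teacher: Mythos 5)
Your proof is correct, and it takes a genuinely different route from the paper's own proof of Proposition~\ref{prop:overR}. The paper proves the proposition indirectly: it lifts the isometry $\overline{\kappa}_{1,0}$ to a bundle isometry $\kappa_{1,0}$ on the larger bundle $\E_\alpha = \V_\alpha\oplus(\T^*\Sigma\otimes\V_\alpha)\oplus\V_\alpha$, applies Theorem~\ref{thm:main} to the first-order reductions $\oS_\alpha$, and then defines $\overline{\R}_\varrho$ by demanding commutativity of the diagram relating $\sol(\P_\alpha)$ to the subspaces $\sol_{\P_\alpha}(\oS_\alpha)\subset\sol(\oS_\alpha)$. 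You instead work directly with the second-order operators, building the interpolated operator $\overline{\P}^\varrho_{\chi,1}$ and the maps $\overline{\R}_\pm$ on $\Gamma(\V_1)$, which amounts to proving Proposition~\ref{prop:overR} and Corollary~\ref{cor:R wave} in a single stroke (your construction literally \emph{is} the explicit formula the paper records in Corollary~\ref{cor:R wave}). Your approach buys you two things: you must establish Green hyperbolicity of $\overline{\P}^\varrho_{\chi,1}$ independently, which you correctly reduce to showing $g_\chi = (1-\chi)g_0 + \chi g_1$ is globally hyperbolic via the shared Cauchy temporal function (this is the content of Lemma~\ref{lem:glob hyp} extended to a non-constant, manifold-dependent weight, and the splitting argument does carry over); and you bypass a subtlety the paper leaves implicit, namely that $\R_\varrho$ from Theorem~\ref{thm:main} actually restricts to a map between the proper subspaces $\sol_{\P_0}(\oS_0)$ and $\sol_{\P_1}(\oS_1)$, which is needed for the paper's diagram to make sense but is not spelled out there. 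The algebraic identities you list ($(1-\chi)(\P_1-\overline{\P}^\varrho_{0,1}) = \P_1-\overline{\P}^\varrho_{\chi,1}$, etc.), the resulting intertwining identity $\P_1\circ\overline{\R}_\varrho = \overline{\kappa}^\varrho_{1,0}\P_0$, and the inverse ansatz all transfer verbatim from the proof of Theorem~\ref{thm:main}, as you say.
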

\begin{proof}
Since the tangent bundle does not depend on the underline metric, then we can define $\kappa_{1,0}:\E_0\to\E_1$ as
$$\begin{pmatrix}
f_0 \\ v \otimes f'_0 \\ f''_0 
\end{pmatrix} \mapsto \begin{pmatrix}
\overline{\kappa}_{1,0}f_0 \\ v \otimes \overline{\kappa}_{1,0}f'_0 \\ 	\overline{\kappa}_{1,0}f''_0 
\end{pmatrix} \,.
$$
Since $\overline{\kappa}_{1,0}$ is an isometry, then also $\kappa_{1,0}$ enjoys the same property.
Hence the operator $\overline{\R}_\varrho$ which makes the following diagram commutative implements the desired non-canonical isomorphism
\begin{flalign*}
\xymatrix{
\sol(\P_0) \ar[d]_-{\iota} \ar[rr]^-{\overline{\R}_\varrho} && \sol(\P_1) \ar[d]_-{\iota}   \\
\sol_{\P_0}(\oS_0)  \ar[rr]^-{\R_\varrho} &&  \sol_{\P_1}(\oS_1) 
}
\end{flalign*}
where $\R_\varrho$ is given as in Theorem~\ref{thm:main}.
\end{proof}

\begin{corollary}\label{cor:R wave}
Assume the setup of Theorem~\ref{thm:main} and Proposition~\ref{prop:overR}.
The operator $\overline{\R}_\varrho$ reads as:
$$\overline\R_\varrho:=\overline\R_-\circ \overline\R_+ \circ  \overline{\kappa}^\varrho_{1,0}$$
where 
$$\overline\R_-:=\Id - \G_{1}^- (1-\chi) (\P_1 - \P_{0,1}^\varrho) \quad \text{and}\quad  \overline\R_+:=\Id - \G_{\chi}^+ \chi (\P_1 -\P_{0,1}^\varrho) $$
where $\G^+_\chi$ is the advanced Green operator for $\P_{\chi,1}^\varrho$ and $\G^-_1$ is the retared Green operator for $\P_1$.
\end{corollary}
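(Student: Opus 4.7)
The plan is to reproduce the proof of Theorem~\ref{thm:main} with the normally hyperbolic operators $\P_\alpha$ in place of the symmetric hyperbolic systems $\oS_\alpha$, and then check that the resulting explicit formula coincides with the $\overline{\R}_\varrho$ defined implicitly by the commutative diagram of Proposition~\ref{prop:overR}.

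First I would verify that $\P_{\chi,1}^\varrho := (1-\chi)\P_{0,1}^\varrho + \chi\P_1$ is normally hyperbolic, hence Green hyperbolic. Since $\overline{\kappa}^\varrho_{1,0}$ is of order zero, $\P_{0,1}^\varrho$ has principal symbol $-g_0(\xi,\xi)\mathrm{Id}$; consequently $\sigma_{\P_{\chi,1}^\varrho}(\xi) = -g_\chi(\xi,\xi)\mathrm{Id}$ where $g_\chi := (1-\chi)g_0 + \chi g_1$ lies in $\mathcal{GH}_\M$ by the convexity statement of Lemma~\ref{lem:glob hyp}, so the advanced and retarded Green operators $\G_\chi^\pm$ for $\P_{\chi,1}^\varrho$ on $(\M,g_\chi)$ exist. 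The support-based well-definedness argument of Theorem~\ref{thm:main} then carries over unchanged. Mimicking identities~\eqref{eq:1}--\eqref{eq:2} at the $\P$-level yields $\P_1\overline{\R}_- = \P_{\chi,1}^\varrho$, $\P_{\chi,1}^\varrho\overline{\R}_+ = \P_{0,1}^\varrho$, and $\P_{0,1}^\varrho\overline{\kappa}^\varrho_{1,0} = \overline{\kappa}^\varrho_{1,0}\P_0$, which combine to show that $\overline{\R}_-\overline{\R}_+\overline{\kappa}^\varrho_{1,0}$ maps $\sol(\P_0)$ into $\sol(\P_1)$. Invertibility follows from the same ansatz as in Theorem~\ref{thm:main}: $\overline{\R}_-^{-1} := \Id + \G_\chi^-(1-\chi)(\P_1 - \P_{0,1}^\varrho)$ and $\overline{\R}_+^{-1} := \Id + \G_{0,1}^+ \chi(\P_1 - \P_{0,1}^\varrho)$, with the same algebraic cancellations among Green operators.

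To identify this explicit operator with the $\overline{\R}_\varrho$ of Proposition~\ref{prop:overR}, I would invoke uniqueness of the Cauchy problem for $\oS_1$ (Theorem~\ref{thm:main shs}). Writing $\iota_\alpha$ for the reduction map of Section~\ref{sec:GWO} and $\iota_{\rm rhs}$ for the bundle embedding $f\mapsto(f,0,0)^T$, both $\R_\varrho\iota_0(u_0)$ and $\iota_1\bigl(\overline{\R}_-\overline{\R}_+\overline{\kappa}^\varrho_{1,0}u_0\bigr)$ lie in $\sol(\oS_1)$ and solve the same inhomogeneous equation $\oS_1\Psi = \iota_{\rm rhs}(\overline{\kappa}^\varrho_{1,0}f_0)$, the latter following from the reduction identity $\oS_\alpha\iota_\alpha = \iota_{\rm rhs}\P_\alpha$ combined with $\oS_1\R_\varrho = \bkappa_{1,0}\oS_0$ from Theorem~\ref{thm:main} and its $\P$-analog. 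Choosing a Cauchy hypersurface $\Sigma\subset J^-(\Sigma_-)$ on which $\chi\equiv 0$ in a neighborhood, the $\R_+$ and $\overline{\R}_+$ corrections vanish there, and the matching of initial data reduces to an identity for $\R_-$ and $\overline{\R}_-$ on $\iota$-images, handled by the Green-operator intertwining $\iota_\alpha\G_\P^\pm = \G_\oS^\pm\iota_{\rm rhs}$; this identity is itself a consequence of uniqueness, since $\iota_\alpha(\G_\P^+ f)$ is past-compactly supported and satisfies $\oS\iota_\alpha(\G_\P^+ f) = \iota_{\rm rhs}(f)$.

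The main obstacle is the verification that $\P_{\chi,1}^\varrho$ is Green hyperbolic, because a convex combination of operators normally hyperbolic with respect to different metrics need not in general be normally hyperbolic with respect to any single fixed metric. The resolution hinges on the hypothesis that $\mathcal{GH}_\M$ is convex: $\P_{\chi,1}^\varrho$ is in fact normally hyperbolic on the globally hyperbolic manifold $(\M,g_\chi)$, reducing the existence of $\G_\chi^\pm$ to classical theory. Once this is in place, the entire support-based construction of Theorem~\ref{thm:main} transfers without modification.
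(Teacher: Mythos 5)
Your proposal follows the paper's proof essentially verbatim: establish that $\sigma_{\P_{\chi,1}^\varrho}(\xi) = -g_\chi(\xi,\xi)\Id$ with $g_\chi = (1-\chi)g_0 + \chi g_1$ globally hyperbolic, reduce to a symmetric hyperbolic system (equivalently, invoke normal hyperbolicity on $(\M, g_\chi)$) to obtain Green operators, then repeat the algebraic cancellations of Theorem~\ref{thm:main}. The additional paragraph checking that the explicit formula matches the implicitly-defined $\overline{\R}_\varrho$ of Proposition~\ref{prop:overR} via uniqueness and the Green-operator intertwining $\iota_\alpha \G_\P^\pm = \G_\oS^\pm\iota_{\rm rhs}$ is a correct detail that the paper leaves implicit in its closing sentence, and the only small caveat worth flagging is that Lemma~\ref{lem:glob hyp} is stated for constant $\lambda$ whereas $\chi$ is a function, though the same argument applies pointwise since both splittings share a Cauchy temporal function.
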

\begin{proof}
We begin by noticing that the principal symbols of $\P_{\chi,1}^\varrho:= (1- \chi) \,\P^\varrho_{0,1} + \chi\P_1$ it satifies
\begin{align*}
\sigma_{\P_{\chi,1}^\varrho}(df) u_1 &= \frac{1}{2}\P_{\chi,1}^\varrho(f^2) u_1 + \frac{1}{2} f^2 \P_{\chi,1}^\varrho(u_1) = (1-\chi)\sigma_{\P_{0,1}^\varrho}(df)u_1 +\chi\sigma_{\P_{1}}(df)u_1 \\
&=- \Big( (1-\chi) g_0(df,df) \, + \chi g_1(df,df) \Big) u_1\,,
\end{align*}
where we used that $ \sigma_{\P^\varrho_{0,1}} = \overline{\kappa}^\varrho_{1,0}\sigma_{\P_{0}} \overline\kappa^\varrho_{0,1}$, see e.g. Equation~\ref{eq:transf princ symb}. By defining $h:=(1-\chi)g_0+\chi g_1$, computations analogous to the one in Section~\ref{sec:GWO}, shows that $\P_{\chi,1}^\varrho$ can be reduced to a symmetric hyperbolic system. This is enough to guaranteed the existence of solutions and hence Green operators for $\P_{\chi,1}^\varrho$ as well. Using analogous computations to the ones performed in the proof of Theorem~\ref{thm:main} we can already conclude.
\end{proof}

As for Section~\ref{sec:Dir-Sol-Herm}, let $\sol_{sc}(\P_\alpha)$ be the space of homogeneous solutions with spacially compact support, \ie
$$\sol_{sc}(\P_\alpha):=\{u_\alpha\in \Gamma_{sc}(\V_\alpha) \,| \, u_\alpha\in \ker\P_\alpha \}\,.$$
Our next task is to show that, assigned $\M_0$ and $\M_1$ there exists a choice of $\varrho$ such that $\overline\R_\varrho$ preserves the symplectic form naturally defined on $\sol_{sc}(\P_\alpha)$. As for the scalar product~\eqref{eq:Herm prod}, also the symplectic form does not depends on the choice of the Cauchy hypersurface.

\begin{lemma}[\cite{CQF1}, Lemma 3.17]\label{lem:indip Sigma 2}
Let $\Sigma_\alpha \subset \M_\alpha$ be a smooth
spacelike Cauchy hypersurface with its future-oriented unit normal vector field $\n$ and its induced volume element $\vol_{\Sigma_\alpha}$. Then
\begin{equation}\label{eq:sympl form}
\begin{aligned}
 \bracket{\cdot}{\cdot}_\alpha &\;:  \sol_{sc}(\P_\alpha)\times \sol_{sc}(\P_\alpha) \to \CC \\
  \bracket{v_\alpha}{v_\alpha}_\alpha=&\int_{\Sigma}\fiber{v_\alpha}{\nabla_\n u_\alpha}-\fiber{u_\alpha}{\nabla_\n v_\alpha} \vol_{\Sigma_\alpha}  \,,
\end{aligned}
\end{equation}
yields  a symplectic form  which does not depend on the choice of $\Sigma_\alpha$.
\end{lemma}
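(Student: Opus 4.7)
The plan is to derive the invariance of $\bracket{\cdot}{\cdot}_\alpha$ from a local conservation law via Stokes' theorem, and then to read off antisymmetry and non-degeneracy. Fix a normally hyperbolic operator $\P=\P_\alpha$ with its canonical metric connection $\nabla^\V$ and zeroth-order term $c\in\Gamma(\mathrm{End}(\V))$ as in Equation~\eqref{eq:wave type}; both the fiber metric $\fiber{\cdot}{\cdot}$ and $c$ are Hermitian, and $\nabla^\V$ is metric. To a pair $u,v\in\sol_{sc}(\P)$ I would associate the $1$-form (equivalently, the $\V\otimes\overline\V$-current)
\begin{equation*}
j(u,v)(X):=\fiber{v}{\nabla_X u}-\fiber{u}{\nabla_X v}\qquad X\in\Gamma(\T\M).
\end{equation*}

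The first step is a direct computation of the divergence. Using the Leibniz rule for the metric connection $\nabla^\V$ and Hermiticity of $c$, one obtains
\begin{equation*}
\nabla_\mu j^\mu(u,v)=\fiber{v}{\mathrm{tr}_g(\nabla^\V\nabla^\V)u+cu}-\fiber{u}{\mathrm{tr}_g(\nabla^\V\nabla^\V)v+cv}=\fiber{v}{\P u}-\fiber{u}{\P v},
\end{equation*}
since the cross terms $\fiber{\nabla_\mu v}{\nabla^\mu u}$ and $\fiber{\nabla_\mu u}{\nabla^\mu v}$ cancel. Hence $\nabla_\mu j^\mu(u,v)\equiv 0$ whenever $u,v\in\ker\P$.

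The second step is the independence from the Cauchy hypersurface. Given two smooth spacelike Cauchy hypersurfaces $\Sigma,\Sigma'\subset\M_\alpha$, I would use the standard fact that $\M_\alpha$ splits as $\RR\times\Sigma$ (Theorem~\ref{thm: Sanchez}) to realize $\Sigma$ and $\Sigma'$ as boundary components of a region $\Omega$. Since $u,v\in\Gamma_{sc}(\V_\alpha)$, the intersection $\Omega\cap(\supp u\cup\supp v)$ is compact, so the divergence theorem applies without any boundary term at spatial infinity, yielding
\begin{equation*}
\int_\Sigma j(u,v)(\n)\,\vol_\Sigma-\int_{\Sigma'} j(u,v)(\n')\,\vol_{\Sigma'}=\int_\Omega \nabla_\mu j^\mu(u,v)\,\vol_\M=0.
\end{equation*}
This proves $\bracket{u}{v}_\alpha$ does not depend on $\Sigma_\alpha$.

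The third step addresses the symplectic nature of the form. Antisymmetry is immediate: swapping $u\leftrightarrow v$ in~\eqref{eq:sympl form} changes the sign of the integrand. Non-degeneracy follows from well-posedness of the Cauchy problem (Theorem~\ref{thm:main shs} applied to the reduction $\oS$ of $\P$): the map $u\mapsto(u|_\Sigma,\nabla_\n u|_\Sigma)$ identifies $\sol_{sc}(\P_\alpha)$ with pairs of compactly supported Cauchy data, and with respect to this identification $\bracket{\cdot}{\cdot}_\alpha$ becomes the standard non-degenerate pairing $\bigl((u_0,u_1),(v_0,v_1)\bigr)\mapsto\int_\Sigma[\fiber{v_0}{u_1}-\fiber{u_0}{v_1}]\vol_\Sigma$ on $\Gamma_c(\V_\alpha|_\Sigma)\oplus\Gamma_c(\V_\alpha|_\Sigma)$; hence if $\bracket{u}{v}_\alpha=0$ for all $v$, the Cauchy data of $u$ vanish, so $u=0$ by uniqueness. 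The main technical point is really the verification of $\nabla_\mu j^\mu=\fiber{v}{\P u}-\fiber{u}{\P v}$, i.e.\ the formal self-adjointness of $\P$ with respect to the chosen fiber metric; once this Green identity is in place, everything else follows formally from Stokes' theorem and Cauchy well-posedness.
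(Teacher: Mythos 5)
Your proof is correct and follows the canonical argument: the paper itself does not prove this statement but cites \cite{CQF1}, Lemma 3.17, and the conserved-current/Stokes route you take is exactly the one used there. The only point worth making explicit is that the cancellation of the cross terms $\fiber{\nabla_\mu v}{\nabla^\mu u}-\fiber{\nabla_\mu u}{\nabla^\mu v}$ and the identity $\fiber{v}{cu}=\fiber{u}{cv}$ rely on the fiber pairing being \emph{symmetric} (real-bilinear or a complex-bilinear extension), as is the case for the real line bundle $\V=\RR\times\M$ used in Section~\ref{sec:alg scalar}; for a genuinely Hermitian pairing on a complex bundle the two cross terms are mutual complex conjugates rather than equal, so $\nabla_\mu j^\mu$ would acquire an extra $2i\,\mathrm{Im}\fiber{\nabla_\mu v}{\nabla^\mu u}$ and the divergence identity would need a modification. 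The remaining steps --- compactness of $\Omega\cap(\supp u\cup\supp v)$ from spatial compactness, antisymmetry from the form of the integrand, and non-degeneracy via the identification with compactly supported Cauchy data and uniqueness in Theorem~\ref{thm:main shs} --- are all sound.
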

Using the same arguments as in the proof of Proposition~\ref{prop:conserv scal prod} we can conclude an analog conservation of the symplectic form.
\begin{proposition}\label{prop:conserv sympl form}
Assume the setup of Corollary~\ref{cor:R wave} and let
 $\varrho \in C^\infty(\M)$ be such that $\vol_{\Sigma_0}=\varrho^2 \vol_{\Sigma_1}$, 
where $\vol_{\Sigma_\alpha}$ is the volume induced by the metric $g_\alpha$ on a Cauchy hypersurface $\Sigma \subset J^-(\Sigma_-)$. 
Then the intertwining operator  $\overline\R_\varrho$  preserves the symplectic form~\eqref{eq:sympl form}.
\end{proposition}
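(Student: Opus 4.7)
My plan is to mirror the proof of Proposition~\ref{prop:conserv scal prod}, with the symplectic form \eqref{eq:sympl form} replacing the Hermitian scalar product \eqref{eq:Herm prod} and Lemma~\ref{lem:indip Sigma 2} replacing Lemma~\ref{lem:indip Sigma}. I fix $u_0,v_0\in\sol_{sc}(\P_0)$, set $\tilde u:=\overline\R_+(\overline\kappa^\varrho_{1,0}u_0)$ and $\tilde v:=\overline\R_+(\overline\kappa^\varrho_{1,0}v_0)$, so that $\overline\R_\varrho u_0=\overline\R_-\tilde u$ and $\overline\R_\varrho v_0=\overline\R_-\tilde v$. From Corollary~\ref{cor:R wave} the wave-operator analogues of \eqref{eq:1}--\eqref{eq:2} give $\P_1\circ\overline\R_-=\P_{\chi,1}^\varrho$ and $\P_{\chi,1}^\varrho\circ\overline\R_+=\P_{0,1}^\varrho$, so that $\P_0 u_0=0$ forces $\tilde u\in\sol_{sc}(\P_{\chi,1}^\varrho)$ and $\overline\R_\varrho u_0\in\sol_{sc}(\P_1)$, and similarly for $v_0$.

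Next, I evaluate $\bracket{\overline\R_-\tilde u}{\overline\R_-\tilde v}_1$ on a Cauchy hypersurface $\Sigma'\subset J^+(\Sigma_+)$, on a tubular neighborhood of which $\chi\equiv 1$ and hence $\P_{\chi,1}^\varrho=\P_1$; the support property of $\G_1^-$ then forces $\overline\R_-\tilde u|_{\Sigma'}=\tilde u|_{\Sigma'}$ and similarly for $\tilde v$. Together with Lemma~\ref{lem:indip Sigma 2} this identifies $\bracket{\overline\R_-\tilde u}{\overline\R_-\tilde v}_1$ with the symplectic form $\bracket{\tilde u}{\tilde v}_{\chi,1}$ on $\sol_{sc}(\P_{\chi,1}^\varrho)$. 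Moving now to a Cauchy hypersurface $\Sigma\subset J^-(\Sigma_-)$ where $\chi\equiv 0$ locally, the support property of $\G_\chi^+$ similarly yields $\tilde u|_\Sigma=\overline\kappa^\varrho_{1,0}u_0|_\Sigma$, and a second application of Lemma~\ref{lem:indip Sigma 2} gives
$$\bracket{\tilde u}{\tilde v}_{\chi,1}=\bracket{\overline\kappa^\varrho_{1,0}u_0}{\overline\kappa^\varrho_{1,0}v_0}_{0,1},$$
where the right-hand side is the symplectic form associated with $\P_{0,1}^\varrho$.

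It then remains to show $\bracket{\overline\kappa^\varrho_{1,0}u_0}{\overline\kappa^\varrho_{1,0}v_0}_{0,1}=\bracket{u_0}{v_0}_0$ by a short direct computation on $\Sigma$. The inputs are that $\overline\kappa_{1,0}$ is a fiberwise linear isometry intertwining the connections on $\V_0$ and $\V_1$, that $\overline\kappa^\varrho_{1,0}=\varrho\,\overline\kappa_{1,0}$, and the hypothesis $\vol_{\Sigma_0}=\varrho^2\vol_{\Sigma_1}$: the factor $\varrho^2$ produced by the pair of $\overline\kappa^\varrho_{1,0}$'s in the integrand is exactly compensated by the relation between the induced volume elements, while the terms produced by $\nabla_{\n_1}$ hitting the scalar function $\varrho$ must drop out of the antisymmetric combination in \eqref{eq:sympl form}. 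Chaining the identities yields the claim.

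The main obstacle I anticipate is this last identification: contrary to the Dirac case, where the integrand involves only the algebraic action of $\gamma(\n)$, the integrand of \eqref{eq:sympl form} contains the covariant derivative $\nabla_{\n}$, which depends simultaneously on the normal (hence on the metric) and on the connection inherited by $\V_1$ through the reduction of $\P_{0,1}^\varrho$ to a symmetric hyperbolic system. Verifying that the $\nabla_{\n_1}\varrho$ cross terms really cancel by the antisymmetry of \eqref{eq:sympl form} in $(u,v)$, and that the remaining piece recombines cleanly with the volume Jacobian to reproduce $\bracket{u_0}{v_0}_0$, is the only genuinely non-automatic ingredient not already supplied by Theorem~\ref{thm:main} and Corollary~\ref{cor:R wave}.
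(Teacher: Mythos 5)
The paper's own ``proof'' of Proposition~\ref{prop:conserv sympl form} consists of a single sentence deferring to the argument of Proposition~\ref{prop:conserv scal prod}, so your proposal---which carries out exactly that mirroring, step for step through $\Sigma'\subset J^+(\Sigma_+)$, then $\Sigma\subset J^-(\Sigma_-)$, invoking Lemma~\ref{lem:indip Sigma 2} in place of Lemma~\ref{lem:indip Sigma}---is essentially the paper's intended argument, rendered more explicitly. The one point you flag as ``not automatic'' is also correctly resolved as you state it: since one is dealing with real scalar fields, the fiber pairing is symmetric, so the cross terms $\varrho\,(\nabla_{\n}\varrho)\bigl(\fiber{v}{u}-\fiber{u}{v}\bigr)$ vanish identically, leaving only $\varrho^{2}$ times the original integrand, which the hypothesis $\vol_{\Sigma_0}=\varrho^{2}\vol_{\Sigma_1}$ absorbs---precisely the mechanism used implicitly in the Dirac case. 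Your proposal therefore matches the paper's approach and, if anything, supplies a justification that the paper omits.
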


\section{Applications}\label{sec:applications}

We conclude this paper with an application inspired by~\cite{Moller,simo3}. In \textit{loc. cit.}, the quantization of a free field theory is interpreted as a two-step procedure:
\begin{itemize}
\item[1.] The first consists of the assignment to a physical system of a $*$-algebra of observables which encodes structural properties such as causality, dynamics and the canonical commutation/anti-commutation relations.
\item[2.] The second step calls for the identification of an algebraic state, which is a positive, linear and normalized functional on the algebra of observables.
\end{itemize}
\begin{remark}
This quantization scheme goes under the name of \emph{algebraic quantum field theory} and it is especially well-suited for formulating quantum theories also on manifold -- see e.g.~\cite{gerard, aqft2} for textbook, to~\cite{CQF1,CQF2,FK,BD, ThomasAlex} for recent reviews, \cite{aqftCAT,aqftCAT2,aqftCAT3,aqftCAT4} for homotopical approaches and~\cite{Nic,Buc,cap,simo4,DHP,DMP3, CSth,cospi,simo1,simo2,simo3,SDH12} for some applications. 
\end{remark}

 As explained in details by Benini, Dappiaggi and Schenkel in~\cite{aqftAff}, the space of observables for the inhomogeneous solution coincides with the one of the homogeneous solution plus an extra observable assigned to the particular solution $\G^+ \ff$, where $\ff$ is the source term and $\G^+$ is the advanced Green operator for the inhomogeneous Cauchy problem. Therefore, in what follows we shall only consider the quantization of  classical fields which satisfies an homogeneous Cauchy problem. 

\subsection{Algebra of Dirac fields}\label{sec:alg Dirac}

As in Section~\ref{sec:Dir-Sol-Herm}, let $\sol_{sc}(\Dir)$ be the space of \emph{homogeneous} solutions with spatially compact support of the Dirac operator endowed with the positive definite Hermitian scalar product~\eqref{eq:Herm prod}.

\begin{definition}\label{def:alg Dirac}
We call \emph{algebra of Dirac fields} the unital, complex $*$-algebra $\mathfrak{A}$ freely generated by  the abstract elements $\Xi(\psi)$, with $\psi\in\sol_{sc}(\Dir)$, together with the following relations for all $\psi,\phi\in\sol_{sc}(\Dir)$ and $\alpha,\beta\in\CC$:
\begin{itemize}
\item[(i)] Linearity: $\Xi(\alpha \psi + \beta \phi) =\alpha \Xi(\psi) + \beta\Xi(\phi)$
\item[(ii)] Hermiticity: $\Xi(\psi)^*=\Xi(\Upsilon\psi)$
\item[(iii)] Canonical anti-commutation relations (CARs):
$$ \Xi(\psi) \cdot\Xi(\phi) + \Xi(\phi)\cdot \Xi(\psi) =0 \qquad \text{and} \qquad  \Xi(\psi) \cdot\Xi(\phi)^* + \Xi(\phi)^*\cdot \Xi(\psi) = \scalar{\psi}{\phi} \,\Id_\fA \,$$
where $\Upsilon$ is the adjunction map~\ref{eq:adj map} and $\scalar{\cdot}{\cdot}$ is the Hermitian scalar product~\eqref{eq:Herm prod}.
\end{itemize}
\end{definition}

A more concrete construction can be obtained as follow. Denote with 
$$\sol_\oplus:=\sol_{sc}(\Dir)\oplus \sol_{sc}(\Dir^*)$$
and consider the tensor $\CC$-algebra $  \mathfrak{T}:= \Big( \bigoplus_{n\in\mathbb{N}}  (\sol_\oplus)^{\otimes_{n}}, \; \mathsmaller{\bullet} \; \Big) \,. $
Notice that, on account of Remark~\ref{rmk:soldual}, $\sol_{sc}(\Dir^*)=\Upsilon\sol_{sc}(\Dir)$. The generators of $\mathfrak{T}$ are given by
$$ \Id_{\mathfrak{T}}= \left\lbrace 1,  0 \dots \right\rbrace \qquad \Xi(\psi):= \left\lbrace 0,  \begin{pmatrix} \psi \\ 0  \end{pmatrix} , 0 \dots \right\rbrace \qquad 
\Xi(\phi)^*:= \left\lbrace 0,  \begin{pmatrix} 0 \\ \Upsilon\phi   \end{pmatrix} , 0 \dots \right\rbrace $$
for any $\psi,\phi\in\sol_{sc}(\Dir)$ and the involution $*: \mathfrak{T}\to\mathfrak{T}$
 is implemented by means of the antilinear isomorphism $\Upsilon$
$$  \left\lbrace 0,\dots,  \begin{pmatrix} \psi_1 \\ \Upsilon\phi_1  \end{pmatrix}\otimes\cdots\otimes \begin{pmatrix} \psi_k \\ \Upsilon\phi_k  \end{pmatrix} , 0 \dots \right\rbrace^* = \left\lbrace 0,\dots,  \begin{pmatrix} \phi_k \\ \Upsilon\psi_k  \end{pmatrix}\otimes\cdots\otimes \begin{pmatrix} \phi_1 \\ \Upsilon\psi_1  \end{pmatrix} , 0 \dots \right\rbrace $$
for every $\psi_1,\phi_1,\dots\psi_k,\phi_k\in\sol_{sc}(\Dir)$. As always, $*$ 
is extended
to all elements of $\mathfrak{T}$ by antilinearity, thus turning $\mathfrak{T}$ into a unital complex $*$-algebra. The canonical relations are implemented taking the quotient of $\mathfrak{T}$  by the $*$-ideal $\mathfrak{I}$ generated by
\begin{equation}\label{eq:CAR}
 \Xi(\psi)\, \mathsmaller{\bullet}\, \Xi(\phi) + \Xi(\phi)\,\mathsmaller{\bullet} \, \Xi(\psi)  \qquad \text{and} \qquad  \Xi(\psi)\, \mathsmaller{\bullet}\, \Xi(\phi)^* + \Xi(\phi)^*\mathsmaller{\bullet} \, \Xi(\psi) - \scalar{\psi}{\phi} \,\Id_\fA \,
 \end{equation}
for every $\psi,\phi\in\sol_{sc}(\Dir)$.
If follows that $\fA=\mathfrak{T}/\mathfrak{I}$ is a  realization of the algebra of Dirac fields. \medskip

Keeping in mind this concrete realization, we can prove the following isomorphism.

\begin{theorem}\label{thm:alg iso}
Assume the setup Proposition~\ref{prop:conserv scal prod} and let $\fA_\alpha$ be the algebra of Dirac fields on $\M_\alpha$. Then there exists a $*$-isomorphism $\fR_{1,0}:\fA_0\to\fA_1$.
\end{theorem}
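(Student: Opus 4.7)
The plan is to lift the $\CC$-linear isomorphism $\R_\varrho : \sol_{sc}(\Dir_0) \to \sol_{sc}(\Dir_1)$ of Theorem~\ref{thm:main} to a $*$-isomorphism of the free tensor $*$-algebras realizing $\fA_0$ and $\fA_1$, and then to verify that it descends to the quotients modulo the CAR ideals. Concretely, I would set
$$\fR_{1,0}(\Xi_0(\psi_0)) := \Xi_1(\R_\varrho\psi_0)\qquad\text{for }\psi_0\in\sol_{sc}(\Dir_0),$$
extend $\CC$-linearly and multiplicatively to $\mathfrak{T}_0$, and impose compatibility with the involution. By the Hermiticity relation~(ii) this forces the action on dual generators to be $\Xi_0(\Upsilon_0\psi_0)\mapsto \Xi_1(\Upsilon_1\R_\varrho\psi_0)$, which on account of~\eqref{eq:sol Dir e Dir*} and Proposition~\ref{prop:R*} is nothing but the action of $\R_\varrho^*$ on $\sol_{sc}(\Dir_0^*)$. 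The resulting assignment is therefore a well-defined unital $*$-homomorphism $\mathfrak{T}_0\to\mathfrak{T}_1$.

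The crux is to show that $\fR_{1,0}$ sends the CAR ideal $\mathfrak{I}_0$ into $\mathfrak{I}_1$. Generators of the first family of~\eqref{eq:CAR} are mapped tautologically to generators of $\mathfrak{I}_1$ of the same form, with $\psi_0,\phi_0$ replaced by $\R_\varrho\psi_0,\R_\varrho\phi_0$, so this case is immediate. For a generator of the second family,
$$\Xi_0(\psi_0)\,\mathsmaller{\bullet}\,\Xi_0(\phi_0)^* + \Xi_0(\phi_0)^*\mathsmaller{\bullet}\,\Xi_0(\psi_0) - \scalar{\psi_0}{\phi_0}_0\,\Id_{\mathfrak{T}_0},$$
the image under $\fR_{1,0}$ equals
$$\Xi_1(\R_\varrho\psi_0)\,\mathsmaller{\bullet}\,\Xi_1(\R_\varrho\phi_0)^* + \Xi_1(\R_\varrho\phi_0)^*\mathsmaller{\bullet}\,\Xi_1(\R_\varrho\psi_0) - \scalar{\psi_0}{\phi_0}_0\,\Id_{\mathfrak{T}_1},$$
which belongs to $\mathfrak{I}_1$ precisely because Proposition~\ref{prop:conserv scal prod} gives the isometry identity $\scalar{\R_\varrho\psi_0}{\R_\varrho\phi_0}_1 = \scalar{\psi_0}{\phi_0}_0$. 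Hence $\fR_{1,0}$ descends to a unital $*$-homomorphism $\fA_0\to\fA_1$.

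To finish, I would run the same construction starting from $\R_\varrho^{-1}:\sol_{sc}(\Dir_1)\to\sol_{sc}(\Dir_0)$, which is provided by Theorem~\ref{thm:main} and still preserves the Hermitian scalar products (as the inverse of a unitary). This produces a unital $*$-homomorphism $\fA_1\to\fA_0$ that agrees with a two-sided inverse of $\fR_{1,0}$ on generators, hence on the whole algebra. I expect the only subtle point to be the verification of $*$-compatibility on the dual side, namely the matching between $\Upsilon_1\R_\varrho\Upsilon_0^{-1}$ and $\R_\varrho^*$ of Proposition~\ref{prop:R*}; this reduces however to a transparent bookkeeping exercise because the Dirac operator satisfies $\Dir=-\Dir^\dagger$, so that $\R_\varrho$ and $\R_\varrho^\dagger$ coincide on solution spaces up to the canonical identification~\eqref{eq:sol Dir e Dir*}.
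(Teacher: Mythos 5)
Your proof is correct and follows essentially the same route as the paper's: lift $\R_\varrho\oplus\R_\varrho^*$ to the tensor algebra $\mathfrak{T}(\sol_\oplus)$, use Proposition~\ref{prop:R*} to identify the action on the dual/cospinor side with the involution, and invoke Proposition~\ref{prop:conserv scal prod} to conclude that the CAR ideal $\mathfrak{I}_0$ is carried onto $\mathfrak{I}_1$. You merely spell out in more detail the check on each family of ideal generators and the construction of the inverse $*$-homomorphism, which the paper leaves implicit.
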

\begin{proof}
Theorem~\ref{thm:main} and Proposition~\ref{prop:R*} establish via $\R_\varrho$ and $\R_\varrho^*$ an isomorphism between $\sol(\Dir_\alpha)$ and $\sol(\Dir_\alpha^*)$ respectively. As a by-product, $\R_\varrho\oplus \R_\varrho^*$ extends first of all to an isomorphism between the tensor algebras $\mathfrak{T}(\sol^\oplus_\alpha)$ by linearity. Finally, on account of Proposition~\ref{prop:conserv scal prod} $\R_\varrho^*= \Upsilon_1\R_\varrho \Upsilon_0$ and $\R_\varrho$ preserve the Hermitian scalar product, which implies that the ideals $\mathfrak{I}_\alpha$ are $*$-isomorphic.
\end{proof}

Before concluding this subsection, we want to make the following remark:
\begin{remark}
The algebra of Dirac fields cannot be considered an algebra of observables, since observables are required to commute at spacelike separations and $\fA$ does not fulfil such requirement. However, the subalgebra $\fA_{\text{obs}} \subset\fA$  composed by even elements, \ie $\Xi(\psi)=-\Xi(\psi)$, which are invariant under the action of $\Spin_0(1,n)$ (extended to $\fA$) is a good candidate as algebra of observables. For further details we refer to~\cite{DHP,dimock}.
\end{remark}

\subsection{Algebras of real scalar fields}\label{sec:alg scalar}
Consider a normally hyperbolic operator $\P$ acting on a real line bundle $\V:=\RR\times\M$. As in Section~\ref{sec:Scal-sol-herm}, let $\sol_{sc}(\P)$ be the space of \emph{homogeneous} solutions with spatially compact support endowed with the symplectic form~\eqref{eq:Herm prod}.

\begin{definition}\label{def:alg scalar}
We call \emph{algebra of real scalar fields} the unital, complex $*$-algebra $\mathfrak{A}$ freely generated by  the abstract elements $\Phi(u)$, with $u\in\sol_{sc}(\P)$, together with the following relations for all $u,v\in\sol_{sc}(\P)$ and $\alpha,\beta\in\CC$:
\begin{itemize}
\item[(i)] Linearity: $\Phi(\alpha u + \beta v) =\alpha \Phi(u) + \beta\Phi(v)$
\item[(ii)] Hermiticity: $\Phi(u)^*=\Phi(u)$
\item[(iii)] Canonical commutation relations (CCRs):
$$ \Phi(v) \cdot\Phi(u) - \Phi(u)\cdot \Phi(v)  = \bracket{v}{u} \,\Id_\fA \,$$
where $\bracket{\cdot}{\cdot}$ is the symplectic form~\eqref{eq:Herm prod}.
\end{itemize}
\end{definition}

A more concrete construction can be obtained by mimicking the one for the Dirac fields. First, consider  the tensor $\CC$-algebra $  \mathfrak{T}:= \Big( \bigoplus_{n\in\mathbb{N}}  \sol_{sc}(\P)^{\otimes_{n}}, \; \mathsmaller{\bullet} \; \Big) \,, $
where the generators of $\mathfrak{T}$ are given by
$$ \Id_{\mathfrak{T}}= \left\lbrace 1,  0 \dots \right\rbrace \qquad \Phi(u):= \left\lbrace 0,  u, 0 \dots \right\rbrace $$
for any $u\in\sol_{sc}(\Dir)$ and the involution $*:\mathfrak{T}\to\mathfrak{T}$
 is implemented by means of the antilinear isomorphism $\Upsilon$
$$  \left\lbrace 0,\dots, u_1\otimes\cdots\otimes u_k , 0 \dots \right\rbrace^* = \left\lbrace 0,\dots,  u_k\otimes\cdots\otimes u_1 , 0 \dots \right\rbrace $$
for every $\psi_1,\phi_1,\dots\psi_k,\phi_k\in\sol_{sc}(\Dir)$. As always, $*$ 
is extended to all elements of $\mathfrak{T}$ by antilinearity, thus turning $\mathfrak{T}$ into a unital complex $*$-algebra. The canonical commutation relations are implemented taking the quotient of $\mathfrak{T}$  by the $*$-ideal $\mathfrak{I}$ generated by
\begin{equation}\label{eq:CCR}
 \Phi(u)\, \mathsmaller{\bullet}\, \Phi(v) - \Phi(v)\,\mathsmaller{\bullet} \, \Phi(u)   - \bracket{u}{v} \,\Id_\fA \,
\end{equation}
for every $u,v\in\sol_{sc}(\P)$.
If follows that $\fA=\mathfrak{T}/\mathfrak{I}$ is a  realization of the algebra of real scalar fields. \medskip

Keeping in mind this concrete realization, we can prove the following isomorphism.

\begin{theorem}\label{thm:alg iso2}
Assume the setup Proposition~\ref{prop:conserv sympl form} and let $\fA_\alpha$ be the algebra of real scalar fields on $\M_\alpha$. Then there exists a $*$-isomorphism $\fR_{1,0}:\fA_0\to\fA_1$.
\end{theorem}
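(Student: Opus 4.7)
The plan is to mirror the proof of Theorem~\ref{thm:alg iso}, replacing the Hermitian scalar product with the symplectic form and the CAR ideal with the CCR ideal. First I would invoke Proposition~\ref{prop:overR} together with Corollary~\ref{cor:R wave} to get the intertwining isomorphism $\overline{\R}_\varrho:\sol(\P_0)\to\sol(\P_1)$, and check that it restricts to a well-defined isomorphism $\overline{\R}_\varrho:\sol_{sc}(\P_0)\to\sol_{sc}(\P_1)$ of homogeneous solutions with spatially compact support. This restriction works because the correction operators $\overline{\R}_\pm$ only apply Green operators to terms of the form $\chi(\P_1-\P_{0,1}^\varrho)u$ or $(1-\chi)(\P_1-\P_{0,1}^\varrho)u$, whose spatial supports are controlled since $\chi$ is constant outside a timelike-compact strip between $\Sigma_-$ and $\Sigma_+$.

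Next I would extend $\overline{\R}_\varrho$ by multilinearity on the tensor factors to a unital algebra isomorphism $\overline{\R}_\varrho^{\otimes}:\mathfrak{T}(\sol_{sc}(\P_0))\to \mathfrak{T}(\sol_{sc}(\P_1))$. Since the involution on the tensor $*$-algebra acts by reversing the order of tensor factors (as one sees from the explicit realization preceding Theorem~\ref{thm:alg iso2}, where $\Phi(u)^*=\Phi(u)$ without any adjunction map), the extension automatically intertwines the $*$-structures, yielding a unital $*$-algebra isomorphism at the level of tensor algebras.

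The core point is then the preservation of the CCR ideal. By Proposition~\ref{prop:conserv sympl form}, one has $\bracket{\overline{\R}_\varrho u}{\overline{\R}_\varrho v}_1=\bracket{u}{v}_0$ for every $u,v\in\sol_{sc}(\P_0)$. Therefore, a typical generator of $\mathfrak{I}_0$,
\begin{equation*}
\Phi(u)\,\mathsmaller{\bullet}\,\Phi(v)-\Phi(v)\,\mathsmaller{\bullet}\,\Phi(u)-\bracket{u}{v}_0\,\Id_{\fA_0},
\end{equation*}
is sent by $\overline{\R}_\varrho^{\otimes}$ to $\Phi(\overline{\R}_\varrho u)\,\mathsmaller{\bullet}\,\Phi(\overline{\R}_\varrho v)-\Phi(\overline{\R}_\varrho v)\,\mathsmaller{\bullet}\,\Phi(\overline{\R}_\varrho u)-\bracket{\overline{\R}_\varrho u}{\overline{\R}_\varrho v}_1\,\Id_{\fA_1}$, which is a generator of $\mathfrak{I}_1$. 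Applying the same argument to $\overline{\R}_\varrho^{-1}$ (whose existence is guaranteed by Corollary~\ref{cor:R wave} and the isomorphism character of $\overline{\R}_\varrho$) yields $\overline{\R}_\varrho^{\otimes}(\mathfrak{I}_0)=\mathfrak{I}_1$. Passing to the quotients produces the desired $*$-isomorphism $\fR_{1,0}:\fA_0\to\fA_1$.

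The only potentially delicate point is the preservation of spatial compactness under the intertwining operator; once this is settled, every remaining step is purely formal and parallels verbatim the argument in Theorem~\ref{thm:alg iso}.
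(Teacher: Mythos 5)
Your proposal is correct and follows essentially the same route as the paper: invoke Corollary~\ref{cor:R wave} for the intertwining isomorphism, extend by multilinearity to the tensor algebra, and use Proposition~\ref{prop:conserv sympl form} to show the CCR ideals correspond. You supply more detail than the paper (in particular you flag the spatial-compactness issue and spell out the ideal preservation generator-by-generator, and you correctly refer to the symplectic form where the paper's proof inadvertently says ``Hermitian scalar product''), but the underlying argument is the same.
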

\begin{proof}
Corollary~\ref{cor:R wave} establish via $\overline\R_\varrho$ an isomorphism between $\sol(\P_\alpha)$. As a by-product,
such result extends first of all to an isomorphism between the tensor algebras $\mathfrak{T}(\sol_{sc}(\P_\alpha))$ by linearity. Finally, by Proposition~\ref{prop:conserv sympl form} $\overline{\R}_\varrho$ preserves the Hermitian scalar product, which implies that the ideals $\mathfrak{I}_\alpha$ are $*$-isomorphic.
\end{proof}

\subsection{Hadamard states}

We conclude this section by studying (algebraic) states and their interplay with the intertwining operator $\mathfrak{R}$.
\begin{definition}
Given a complex $*$-algebra $\mathfrak{A}$ we call  \emph{(algebraic) state} any linear functional from $\mathfrak A $ into $\CC$ that is positive, \ie $\omega(\aa^*\aa)\geq 0$ for any $\aa\in\fA $, and normalized, \ie $\omega(\Id_\fA)=1$. 
\end{definition}
Let $\V$ be a vector bundle and consider a Green hyperbolic operator $\Q:\V\to\V$. 
Let $\sol_{sc}(\Q)$ be the space of homogeneous solutions with spacially compact support. 
\begin{definition}
Following Section~\ref{sec:alg Dirac} and Section~\ref{sec:alg scalar} we call \emph{algebra of fields} 
$$\fA=\frac{\bigoplus_{n\in\mathbb{N}} \sol_{sc}(\Q)^{\otimes_{n}}}{\fI }$$
where $\fI$ is a suitable $*$-ideal which encodes CCR- or CAR-relations. 
\end{definition}
\begin{remark}
Notice that if we set $\Q=\P:C^\infty(\M)\to C^\infty(\M) $ and we consider the ideal generated by the CCR relations~\eqref{eq:CCR}, we thus obtain the algebra of real scalar fields. While if we set $\Q=\Dir\oplus\Dir^*:\Gamma(\S\M)\oplus\Gamma(\S^*\M)\to\Gamma(\S\M)\oplus\Gamma(\S^*\M)$ and we consider the ideal generated by the CAR relations~\eqref{eq:CAR}, we thus obtain the algebra of Dirac fields.
\end{remark}
Due to the natural grading on the algebra of fields $\fA$, it suffices to $\omega$ on the monomials. This gives rise to the n-points distributions $\omega^{(n)}\in \big( \Gamma_c(\V)\big)'$ by means of the relations
\begin{equation}\label{eq: omega_n}
\omega^{(n)}\left(f_1,\dots,f_n\right)=\omega\left(f_1 \otimes \dots \otimes f_n \right) :\frac{ (\sol_{sc}(\Q))^{\otimes_{n}}}{\mathfrak{I}} \to \CC \,.
\end{equation}
where $u_j=\G f_j$, with $j=1,\dots,n$, and $\G$ is the causal Green propagators for $\Q$.
 This leads us to the following definition.

\begin{definition}\label{def:quasifree}
A state $\omega$ on the algebra of fields is \emph{quasifree} if its n-point functions 
$\omega^{(n)}$   vanish for odd n, while for even n, they are defined  as
\[ \omega^{(n)}\left(f_1,\dots,f_n\right)=\sum\limits_{\sigma \in S'_n} (-1)^{\text{\rm{sign}}(\sigma)} \prod\limits_{i=1}^{n/2}
\;\omega^{(2)} \left(
f_{\sigma(2i-1)} ,
f_{\sigma(2i)} \right) \:, \]
where~$S'_n$ denotes the set of ordered permutations of $n$ elements.
\end{definition}

\begin{remark}\label{rmk:Had cond}
It is widely accepted that, among all possible (quasifree) states, the physical ones are required to satisfy the so-called \emph{the Hadamard condition}.  
The reasons for this choice are manifold: For example, it implies the finiteness of the quantum fluctuations of the expectation value of every observable and it allows us to construct Wick polynomials following a covariant scheme, see \cite{HW} or \cite{IgorValter} for recent reviews.
This requirement is conveniently translated in the language of microlocal analysis, in particular into a microlocal characterization of the two-points distribution of the state.  Since a full characterization is out of the scope of the paper, for further details we refer to~\cite{cap,gerard1,gerard2,gerard3}  for scalar fields and to~\cite{simo1,DHP} for Dirac fields -- see also~\cite{cap2, gerardYM,simogravity,Micau3} for gauge theory.
\end{remark}

With the next theorem, we show that the pull-back of a quasifree state along the isomorphism $\mathfrak{R}_{1,0}:\fA_0 \to \fA_1$ induced by the intertwining operator $\R_\varrho$ for $\Q$  (see e.g. Theorem~\ref{thm:alg iso} or Theorem~\ref{thm:alg iso2}) preserves singularity structure of the two-point distribution $\omega^{(2)}$, \ie  it preserves the wavefront set.  

\begin{theorem}\label{thm:main appl}
Assume the Setup~\ref{setup} and denote with $\fA_\alpha$ the algebra of fields on $\M_\alpha$. Finally let $\omega_\alpha:\fA_\alpha\to\CC$ be quasifree states satisfying
$$\omega_0=\omega_1 \circ \mathfrak{R}_{1,0}\,: \fA_0 \to \CC$$
with $\mathfrak{R}_{1,0}$ is the isomorphism induced by $\R_\varrho$. Then the bi-distributions of the associated two-point distributions $\omega^{(2)}_\alpha$  have the same singularity structure. 
 \end{theorem}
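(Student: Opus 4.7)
My plan is to translate the algebraic identity $\omega_0=\omega_1\circ\mathfrak{R}_{1,0}$ into a pullback formula at the level of bi-distributional kernels, and then to exploit propagation of singularities for symmetric hyperbolic systems.

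First, I would unpack the action of $\mathfrak{R}_{1,0}$ on generators. By the constructions of Section~\ref{sec:alg Dirac} and Section~\ref{sec:alg scalar}, $\mathfrak{R}_{1,0}$ sends a generator indexed by $u_0\in\sol_{sc}(\oS_0)$ to the generator indexed by $\R_\varrho u_0\in\sol_{sc}(\oS_1)$ (and, in the Dirac case, similarly $\R_\varrho^*$ on the dual generators), so that
$$\omega_0^{(2)}(u_0,v_0)=\omega_1^{(2)}(\R_\varrho u_0,\R_\varrho v_0).$$
Using Proposition~\ref{prop:causal prop} to parametrize $u_\alpha=\G_\alpha f_\alpha$ with $f_\alpha\in\Gamma_c(\E_\alpha)$, and the fact that $\R_\varrho$ maps $\sol_{sc}(\oS_0)$ into $\sol_{sc}(\oS_1)$, I would produce a linear map $T\colon\Gamma_c(\E_0)\to \Gamma_c(\E_1)/\oS_1\Gamma_c(\E_1)$ determined by $\G_1 T=\R_\varrho\G_0$, turning the identity above into
$$\omega_0^{(2)}(f_0,g_0)=\omega_1^{(2)}(Tf_0,Tg_0).$$

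Second, I would analyze $T$ microlocally. The operator $\R_\varrho=\R_-\circ\R_+\circ\bkappa_{1,0}$ is a finite composition of smooth multipliers (the bundle isometry $\kappa_{1,0}$, the cutoff $\chi$ and the weight $\varrho$) with the Green operators $\G_1^\pm$, $\G_\chi^\pm$ of the symmetric hyperbolic systems $\oS_1$ and $\oS^\varrho_{\chi,1}$. The wavefront set of any such Green operator is contained in the diagonal together with the bicharacteristic relation of the principal symbol (cut out, for $\oS^\varrho_{\chi,1}$, by the homotopy metric $g_\chi:=(1-\chi)g_0+\chi g_1$ introduced in Lemma~\ref{lem:Schi}). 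Consequently $T$ propagates singularities along a controlled conic relation, and an analogous statement holds for the inverse intertwiner $\mathfrak{R}_{1,0}^{-1}$.

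Third, I would close the argument using the equations of motion satisfied by $\omega_\alpha^{(2)}$. Being a bi-solution of $\oS_\alpha$ in each slot, by propagation of singularities for symmetric hyperbolic systems (\cf~\cite{cap,simo1}), $\mathrm{WF}(\omega_\alpha^{(2)})$ is invariant under the bicharacteristic flow of $\oS_\alpha$. In a neighborhood of a Cauchy hypersurface $\Sigma\subset J^-(\Sigma_-)$, where $\chi\equiv 0$ by construction, the operator $T$ reduces to the smooth fiberwise isometry $\bkappa_{1,0}$, so $\omega_0^{(2)}$ and $\omega_1^{(2)}$ share the same singular support near $\Sigma\times\Sigma$; propagating this microlocal Cauchy datum along the respective bicharacteristic flows, and using $\mathfrak{R}_{1,0}^{-1}$ for the reverse implication, yields the claim.

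The main obstacle I expect lies in the last step: \textit{a priori}, $\mathrm{WF}(\omega_0^{(2)})$ and $\mathrm{WF}(\omega_1^{(2)})$ are subsets of $\T^*(\M\times\M)\setminus 0$ constrained by the bicharacteristics of the \emph{different} metrics $g_0$ and $g_1$, so that the coincidence of singularity structure really encodes the equivalence of a Hadamard-type microlocal condition (\cf Remark~\ref{rmk:Had cond}) for the two geometries. Matching these bicharacteristic structures by transporting the microlocal Cauchy datum through the intermediate system $\oS^\varrho_{\chi,1}$, whose null bicharacteristics interpolate those of $g_0$ and $g_1$ via the homotopy $g_\chi$, appears to be the most delicate technical point.
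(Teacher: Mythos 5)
Your plan shares the paper's key ingredients (pullback at the level of two-point distributions, microlocal properties of the Green operators, propagation of singularities, the intermediate system $\oS^\varrho_{\chi,1}$), but your third step contains a concrete error that breaks the argument. You claim that near a Cauchy hypersurface $\Sigma\subset J^-(\Sigma_-)$, where $\chi\equiv 0$, the map $T$ (equivalently $\R_\varrho=\R_-\circ\R_+\circ\bkappa_{1,0}$) reduces to $\bkappa_{1,0}$. This is false: $\R_+=\Id-\G^+_\chi\,\chi(\oS_1-\oS^\varrho_{0,1})$ does reduce to the identity there, because the correction is propagated to the future by $\G^+_\chi$ and is supported in $J^+(\Sigma_-)$; but $\R_-=\Id-\G^-_1(1-\chi)(\oS_1-\oS^\varrho_{0,1})$ does \emph{not}, since $(1-\chi)$ has support in $J^-(\Sigma_+)$ and the retarded Green operator $\G^-_1$ sends this into $J^-(\Sigma_+)\supset J^-(\Sigma_-)$. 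Consequently, there is no region of $\M$ on which $\R_\varrho$ collapses to a smooth fiberwise multiplier, and the assertion that $\omega_0^{(2)}$ and $\omega_1^{(2)}$ share the same singular support near $\Sigma\times\Sigma$ does not follow.

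The paper resolves exactly this point by inserting an \emph{intermediate} bi-solution $\omega_\chi^{(2)}$ for $\oS^\varrho_{\chi,1}$. On $\N_1$, a Cauchy neighborhood in $J^+(\Sigma_+)$, one has $\R_-=\Id$ and $\oS^\varrho_{\chi,1}=\oS_1$, so $\omega_\chi^{(2)}$ matches $\omega_1^{(2)}$ there; on $\N_2$, a Cauchy neighborhood in $J^-(\Sigma_-)$, one has $\R_+=\Id$ and $\oS^\varrho_{\chi,1}=\oS^\varrho_{0,1}$, so $\omega_\chi^{(2)}$ matches (the $\bkappa$-conjugate of) $\omega_0^{(2)}$. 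Propagation of singularities is then invoked for $\oS^\varrho_{\chi,1}$ to transport the microlocal data between $\N_1$ and $\N_2$. You gesture towards this in your final paragraph — ``transporting the microlocal Cauchy datum through the intermediate system'' — but identify it merely as a delicate technical point rather than as the load-bearing step; and your step three, as written, incorrectly tries to compare $\omega_0^{(2)}$ and $\omega_1^{(2)}$ directly on a single Cauchy neighborhood, where their intertwiner is not a smooth multiplier. To repair the argument, drop the claim that $T$ is fiberwise near the past hypersurface, introduce $\omega_\chi^{(2)}$ explicitly, and use the two asymmetric reductions ($\R_-\to\Id$ in the future, $\R_+\to\Id$ in the past) as two distinct comparisons bridged by propagation of singularities for $\oS^\varrho_{\chi,1}$.
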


\begin{proof}
Since $\fR_{1,0}$ is $*$-isomorphism, $\omega_0$ inherits the property of being a quasifree state from $\omega_1$. 
In particular two-point function $\omega_0^{(2)}$ satisfies
\begin{align*}
\omega_0^{(2)} \left( f_0,g_0\right):=\omega_0 \left( 
u_0\otimes v_0\right)&=\omega_1^{(2)} \left( \R_\varrho u_0\otimes \R_\varrho v_0\right)\,.
\end{align*}
Consider now the restriction of $\omega^{(2)}_1$ to a neighborhood $\N_1$ of a Cauchy hypersurface ${\Sigma_1}\subset J^+(\Sigma_+)$. On account of Theorem~\ref{thm:main}, $\chi=1$ and we have ${\R_\varrho}|_{\N_1}= \R_+\kappa_{1,0}^\varrho$\,.
In particular, $\omega_{0}$ reads as
\begin{align*}
\omega_0^{(2)} \left.\left( f_0,g_0\right)\right|_{\N_1}&=\omega_1 \left.\left( \R_+\kappa_{1,0}^\varrho u_0 \otimes
\R_+\kappa_{1,0}^\varrho v_0 \right)\right|_{\N_1}\,.
\end{align*}
The latter two-points distribution can also be read as the restriction to $\N$ of the two-point distribution $\omega_\chi$ of a state on the algebra of $\Q_\chi$-fields (\cf Proof of Theorem~\ref{thm:main}), namely
$$\omega_\chi:\fA_\chi\to \CC \qquad \omega_\chi \Big|_{\N_1}= \Big(\omega_1 \circ \fR_{1,0} \Big) \Big|_{\N_1}\,. $$
 On account of the Theorem on the propagation of singularites, see e.g.\cite{propSing}, $\omega_\chi^{(2)}$ has the same singularities in the whole manifold. By restricting $\omega_\chi$ to a neighborhood $\N_2$ of a Cauchy hypersurface ${\Sigma_2}\subset J^-(\Sigma_-)$ 
we thus obtain
$$ \omega^{(2)}_\chi \Big|_{\N_2}= \omega_1^{(2)} \left.\left( 
\kappa_{1,0}^\varrho f_0 , \kappa_{1,0}^\varrho g_0 \right)\right|_{\N_2}\,. $$
Since the composition with a smooth map does not change the wavefront set, $\omega_0^{(2)}$ has the same singularity structure of $\omega_1^{(2)}$. 
\end{proof}

\begin{remark}
The main drawback of the definition of the intertwining operator $\mathfrak{R}$, used in Theorem~\ref{thm:main appl}, is the lack of any control on the action of the group of $*$-automorphism induced by the isometry group of $\M$ on $\omega_2$. Let us remark, that the study of invariant states is a well-established research topic (\cf \cite{NCTori,NCspinoff}). Indeed, the type of factor can be inferred by analyzing which and how many states are invariant. From a more physical perspective instead, invariant states can represent equilibrium states in statistical mechanics e.g. KMS-states or ground states.
\end{remark}
The previous remark leads us to the following open question: Under which conditions it is possible to perform an adiabatic limit, namely when $ \lim_{\chi\to 1} \omega_1$ is well-defined? \medskip

A priori we expect that there is no positive answer in all possible scenarios, since
it is known that certain free-field theories, e.g., the massless and minimally coupled (scalar or Dirac) field on four-dimensional de Sitter spacetime, do not possess a ground state, even though their massive counterpart does. A partial answer is given in~\cite{Moller,Moller2} for the case of scalar field theory on globally hyperbolic manifolds. In those papers, it is investigated how to relate normally hyperbolic operators which differ from a smooth potential, e.g. massive and massless wave operators.
\medskip

We conclude this paper with the following corollary, which is a new proof of the existence of Hadamard states on every globally hyperbolic manifold.
\begin{corollary}\label{cor:existence Hadamard}
Let $(\M,g)$ be a globally hyperbolic manifold  and denote with $\mathfrak A$ the algebra of fields. Then there exists a state $\omega:\mathfrak{A}\to\CC$ which satisfies the Hadamard condition.
\end{corollary}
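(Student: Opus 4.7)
The plan is to transfer a Hadamard state from an auxiliary ultrastatic spacetime to $(\M,g)$ by means of Theorem~\ref{thm:main appl}, exploiting that the induced $*$-isomorphism $\fR_{1,0}$ preserves the singularity structure of two-point functions.

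First, reduction to an ultrastatic background. I invoke Theorem~\ref{thm: Sanchez} to split $\M\cong\RR\times\Sigma$ with metric $g=-\beta^2dt^2+h_t$. Set $g_0:=g$ and construct an ultrastatic metric $g_1:=-dt^2+h$, where $h$ is a complete Riemannian metric on $\Sigma$ chosen large enough pointwise so that every $g_1$-timelike vector is also $g_0$-timelike, \ie $g_1\leq g_0$. By construction both metrics belong to $\mathcal{GH}_\M$ and share the Cauchy temporal function $t$, placing us squarely in Setup~\ref{setup}. In the Dirac case the required bundle isometry $\kappa_{1,0}$ is supplied by Lemma~\ref{lem:Kappa} (in the scalar case the bundle is trivial), while Proposition~\ref{prop:conserv scal prod} (\resp Proposition~\ref{prop:conserv sympl form}) dictates the choice of $\varrho\in C^\infty(\M)$ that makes $\R_\varrho$ preserve the Hermitian scalar product (\resp symplectic form) on the solution space.

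Next, construct the Hadamard state on the ultrastatic side. Since $(\M,g_1)$ is stationary with Killing field $\partial_t$, the spatial operator (either $\Dir_1|_\Sigma$ or the square root of the spatial part of $\P_1$) is essentially self-adjoint on $L^2$ of the Cauchy surface, and its spectral calculus produces the ground (vacuum) state $\omega_1$. This is a quasifree state whose two-point function is well-known to be of Hadamard form on ultrastatic backgrounds with complete Cauchy surface, via the standard Fulling-Narcowich-Wald style argument mentioned in Remark~\ref{rmk:Had cond}.

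Finally, I pull the state back. By Theorem~\ref{thm:alg iso} (\resp Theorem~\ref{thm:alg iso2}) the intertwining operator $\R_\varrho$ of Theorem~\ref{thm:main} lifts to a $*$-isomorphism $\fR_{1,0}:\fA\to\fA_1$, so that
$$\omega:=\omega_1\circ\fR_{1,0}:\fA\to\CC$$
is automatically a quasifree state on $\fA$. Theorem~\ref{thm:main appl} then implies that the wavefront set of $\omega^{(2)}$ coincides with that of $\omega_1^{(2)}$, so $\omega$ satisfies the Hadamard condition. The main obstacle lies in Step~1: one has to guarantee that a suitable ultrastatic $g_1$ with narrower causal cones and the very same Cauchy temporal function as $g$ can always be produced, and that the ground-state construction on $(\M,g_1)$ indeed delivers a Hadamard state for the concrete field-theoretic operator under consideration; the remaining work is then purely an application of the machinery already developed in Sections~\ref{sec:main} and~\ref{sec:applications}.
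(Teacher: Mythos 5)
Your overall strategy — transport a ground state on an auxiliary static/ultrastatic spacetime across the $*$-isomorphism $\fR_{1,0}$ of Theorem~\ref{thm:alg iso} (\resp Theorem~\ref{thm:alg iso2}) and invoke Theorem~\ref{thm:main appl} to conclude the Hadamard property — is the same as the paper's. However, there is a genuine gap in Step~1, and you yourself flag it but do not resolve it. You try to construct an ultrastatic metric $g_1 = -dt^2 + h$ on \emph{all} of $\M \cong \RR\times\Sigma$ with $g_1 \le g_0 = g$, which amounts to finding a single Riemannian metric $h$ on $\Sigma$ with $h \ge \beta^{-2} h_t$ for \emph{every} $t \in \RR$. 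If $\beta \to 0$ or the family $h_t$ grows unboundedly as $|t|\to\infty$, no such $h$ exists, so the global comparison you rely on is not available in general.

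The paper sidesteps this exactly where you stop: it restricts first to a globally hyperbolic open neighborhood $\mathcal{O}$ of a fixed Cauchy hypersurface $\Sigma$ (a ``slab'' in $t$), constructs the static comparison metric $g_u \le g|_{\mathcal{O}}$ with $g_u, g \in \mathcal{GH}_{\mathcal{O}}$ only on $\mathcal{O}$, pulls back the static ground state $\omega_H$ to a Hadamard state on $\fA_{\mathcal{O}}$ via $\fR$ and Theorem~\ref{thm:main appl}, and then appeals to the \emph{time-slice axiom} to identify $\fA_{\mathcal{O}}$ with $\fA$, transporting the state to the full algebra. The restriction to $\mathcal{O}$ and the final time-slice step are the missing ingredients in your argument, and without them the claim that a suitable global $g_1$ can ``always be produced'' is unjustified.
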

\begin{proof} 
Let $\Sigma$ be a Cauchy hypersurface for $\M$ and denote with $\mathcal{O}$ a globally hyperbolic open neighborhood of $\Sigma$, namely $\cal O$ is an open neighborhood of $\Sigma$ in $\M$ containing all causal curves for $\M$ whose endpoints lie in $\cal O$. Notice that $\cal O$ is a globally hyperbolic submanifold of $\M$ and it posses an algebra of fields which we denote by $\fA_{\cal O}$.
Consider now $g|_{\cal O}$, the restriction of $g$ to $\cal O$ and denote with $g_u$ a static metric such that $g_u\leq g|_{\cal O}$ and $g_u,g\in\cal{GH}_{\cal O}$. On a static globally hyperbolic manifold $({\cal O},g_u)$ one may construct a ground state $\omega_H$ for the algebra of fields $\fA^u_{\cal O}$ which can be shown to be Hadamard (\cf Remark~\ref{rmk:Had cond}). 
By denoting with $\fR:\fA_{\cal O}\to\fA^u_{\cal O}$ the intertwining operator between the algebra of fields on $({\cal O},g|_{\cal O})$ and $({\cal O},g_u)$ respectively (\cf Theorem~\ref{thm:alg iso} and Theorem~\ref{thm:alg iso2}), the state defined by
$$\omega=\omega_H \circ \mathfrak{R}\,: \fA_{\cal O} \to \CC$$
satisfies the Hadamard condition on account of Theorem~\ref{thm:main appl}.  On account of the time-slice axiom (see e.g.~\cite[Section 3]{BD}), $\fA_{\cal O}$   and $\fA$ are $*$-isomorphic, so we can pull-back $\omega$ along the $*$-isomorphism and therefore we can conclude.
\end{proof}

\vspace{0.5cm}


\begin{thebibliography}{99}



 \bibitem{Ba} 
C.~B\"ar, 
\textit{Green-hyperbolic operators on globally hyperbolic spacetimes,} Commun.\ Math.\ Phys.\  {\bf 333}, 1585 (2015).


\bibitem{Ba-lect} 
C.~B\"ar, 
\textit{Geometric wave equations.} Geometry in Potsdam (2017).



\bibitem{Baer}
	C.~B\"ar, P. Gauduchon, A. Moroianu,
	\textit{Generalized cylinders in semi-Riemannian and spin geometry.}
	Math.\ Z.\ \textbf{249}  545-580 (2005).   
	
	\bibitem{CQF1}
	C.~B\"ar and N.~Ginoux,	
	\textit{Classical and quantum fields on Lorentzian manifolds.}	
in: C.~B\"ar, J. Lohkamp and M. Schwarz (eds.), Global Differential Geometry, 359-400, Springer-Verlag Berlin Heidelberg (2012).

	\bibitem{CQF2}
	C.~B\"ar and N.~Ginoux,	
	\textit{CCR- versus CAR-Quantization on Curved Spacetimes}	
in: F. Finster, O. M\"uller, M. Nardmann, J. Tolksdorf and E. Zeidler
 (eds.), Quantum Field Theory and Gravity, 183-206, Springer Basel AG (2012).
	
	\bibitem{wave}
	C.~B\"ar, N.~Ginoux and F.~Pf\"affle,
	\textit{Wave equations on Lorentzian Manifolds and Quantization.}
	ESI Lectures in Mathematics and Physics (2007).
 
 \bibitem{NCTori}
 F.~Bambozzi, S.~Murro and N.~Pinamonti,
 \textit{Invariant states on noncommutative tori.}
 to appear on Int.\ Math.\ Res.\ Not.
 
 \bibitem{NCspinoff}
 F.~Bambozzi and S.~Murro,
 \textit{On the uniqueness of invariant states.}
 Adv. Math. \textbf{376} 107445 (2021).
 	
%
\bibitem{cap2}
M.~Benini, M.~Capoferri and C.~Dappiaggi, 
\textit{Hadamard states for quantum Abelian duality.}
Annales Henri Poincar\'e \textbf{18}, 3325-3370(2017).

 \bibitem{BD}
M.~Benini and C.~Dappiaggi, 
\textit{Models of free quantum field theories on curved backgrounds,}
in: R. Brunetti, C. Dappiaggi, K. Fredenhagen and J. Yngvason (eds.), Advances
in Algebraic Quantum Field Theory, 75-124, Springer-Verlag, Heidelberg (2015).

 \bibitem{simogravity}
M.~Benini, C.~Dappiaggi and S.~Murro,
\textit{Radiative observables for linearized gravity on asymptotically flat spacetimes and their boundary induced states.} J.\ Math.\ Phys.\ \textbf{55}, 082301 (2014). 

\bibitem{aqftAff}
M.~Benini, C.~Dappiaggi and A.~Schenkel,
\textit{Quantum field theory on affine bundles.}
Annales Henri Poincar\'e \textbf{15}, 171-211 (2014).

 \bibitem{aqftCAT}
M.~Benini, C.~Dappiaggi and A.~Schenkel,
\textit{Algebraic quantum field theory on
spacetimes with timelike boundary.}
Annales Henri Poincar\'e \textbf{19}, 2401-2433(2018).


 \bibitem{aqftCAT2}
 M.~Benini, M.~Perin and A.~Schenkel,
\textit{Model-Independent Comparison Between Factorization Algebras and Algebraic Quantum Field Theory on Lorentzian Manifolds.} to appear on
Commun.\ Math.\ Phys.\  \textbf{377}, 971-997(2020).


 \bibitem{aqftCAT3}
   M.~Benini and A.~Schenkel,
\textit{Quantum Field Theories on Categories Fibered in Groupoids.} Commun.\ Math.\ Phys.\  \textbf{356}, 19-64 (2017).

 \bibitem{aqftCAT4}
  M.~Benini, A.~Schenkel and L.~Woike,
\textit{Homotopy theory of algebraic quantum field theories.} Lett.\ Math.\ Phys.\ \textbf{109}, 1487-1532 (2019).

\bibitem{BeSa}
A.~Bernal and M.~S\`anchez,
\textit{Smoothness of time functions and the metric splitting of globally hyperbolic spacetimes.}
Commun. Math. Phys. \textbf{257}, 43-50 (2005).

  \bibitem{aqft2} 
R.~Brunetti, C. Dappiaggi, K. Fredenhagen and J. Yngvason, \textit{Advances in algebraic quantum field theory.} Springer (2015)

  \bibitem{Nic} 
R.~Brunetti, K. Fredenhagen  and N.~Pinamonti,
\textit{Algebraic approach to Bose Einstein Condensation in relativistic Quantum Field Theory. Spontaneous symmetry breaking and the Goldstone Theorem,} arXiv:1911.01829 [math-ph] (2019).

\bibitem{Buc}
D.~Buchholz and K.~Fredenhagen,
\textit{A $C^*$-algebraic approach to interacting quantum field theories.}
 Commun.\ Math.\ Phys.\ \textbf{377}, 947-969 (2020).

\bibitem{cap}
M.~Capoferri, C.~Dappiaggi and N.~Drago,
\textit{Global wave parametrices on globally hyperbolic spacetimes.}
J. Math. Anal. App. \textbf{490}, 124316 (2020).

\bibitem{MatteDirac}
M.~Capoferri and D.~Vassiliev,
\textit{Global propagator for the massless Dirac operator and spectral asymptotics.}
	arXiv:2004.06351 [math.AP] (2020).


  \bibitem{Moller}
    C.~Dappiaggi and N.~Drago,
    \textit{Constructing Hadamard States via an Extended M\o ller Operator,}
    Lett.\ Math.\ Phys.\  \textbf{106}, 1587-1615 (2016).
    
    
    \bibitem{simo4}
C. Dappiaggi, F.~Finster, S.~Murro and E.~Radici,
{\it The Fermionic Signature Operator in De Sitter Spacetime,}  J.\ Math.\ Anal.\ Appl.\ \textbf{485}, 123808 (2020).
  

    
    
  \bibitem{DHP}
  C.~Dappiaggi, T.~P.~Hack and N.~Pinamonti
  \textit{The extended algebra of observables for Dirac fields and the trace anomaly of their stress-energy tensor,} 
  Rev.\ Math.\ Phys.\ \textbf{21}, 1241-1312 (2009).
  
  
      \bibitem{SDH12} 
C.~Dappiaggi, T.~P.~Hack and K.~Sanders,
\textit{  Electromagnetism, local covariance, the Aharonov-Bohm effect and Gauss' law.}
  Commun.\ Math.\ Phys.\  {\bf 328}, 625 (2014).
  

\bibitem{DMP3} 
C.~Dappiaggi, V.~Moretti and N.~Pinamonti, \textit{Rigorous construction and Hadamard property of the Unruh state in Schwarzschild spacetime.} Adv. Theor. Math. Phys. \textbf{15} 355   (2011)

\bibitem{CSth}
C.~Dappiaggi, S.~Murro and A.~Schenkel,
\textit{Non-existence of natural states for Abelian Chern–Simons theory,}
J.\ Geom.\ Phys.\ \textbf{116}, 119-123 (2017).


  \bibitem{cospi} 
C.~Dappiaggi, G. Nosari and N. Pinamonti, \textit{The Casimir effect from the point of view of algebraic quantum field theory.}  Math.\ Phys.\ Anal.\ Geom.\ \textbf{19}, 12  (2016).



  \bibitem{dimock} 
  J.~Dimock
  \textit{Dirac quantum fields on a manifold,}
Trans.\ Am.\ Math.\ Soc.\  {\bf 269}, 133 (1982).

\bibitem{Moller2}
N.~Drago and C. G\'erard,
\textit{On the adiabatic limit of Hadamard states.}
Lett.\ Math.\ Phys.\ \textbf{107}, 1409-1438 (2017).

\bibitem{simo3}
 N.~Drago and S.~Murro,
  \textit{A new class of Fermionic Projectors: M{\o}ller operators and mass oscillation properties,}
  Lett.\ Math.\ Phys.\ \textbf{107}, 2433-2451 (2017).
  
   
	
	

\bibitem{simo1}
F.~Finster, S.~Murro and C.~R\"oken,
{\it The Fermionic Projector in a Time-Dependent External Potential: Mass Oscillation Property and Hadamard States,}
J.\ Math.\ Phys.\  {\bf 57}, 072303 (2016).

  \bibitem{simo2}
F.~Finster, S.~Murro and C.~R\"oken,
{\it The Fermionic Signature Operator and Quantum States in Rindler Space-Time,}
  J.\ Math.\ Anal.\ Appl.\  {\bf 454}, 385 (2017).
  
  \bibitem{FK} 
  K.~Fredenhagen and K.~Rejzner,
  \textit{Quantum field theory on curved spacetimes: Axiomatic framework and examples,}
  J.\ Math.\ Phys.\  {\bf 57}, 031101 (2016).

%
 
 \bibitem{gerard} 
C. G\'erard, \textit{Microlocal Analysis of Quantum Fields on Curved Spacetimes.} ESI Lectures in Mathematics and Physics (2019). 
  
   \bibitem{gerard1}
 C.~G\'erard, O.~Oulghazi and M.~Wrochna,
 \textit{Hadamard States for the Klein-Gordon Equation on Lorentzian Manifolds of Bounded Geometry.} 
 Commun.\ Math.\ Phys.\ \textbf{352}, 519-583 (2017).
 
  \bibitem{gerard2}
   C.~G\'erard and M.~Wrochna,
   \textit{Construction of Hadamard states by characteristic Cauchy problem.}
   Anal.\ \& PDE \textbf{9}, 111-149 (2016).
   
   \bibitem{gerard3}
 C.~G\'erard and M.~Wrochna,
 \textit{Analytic Hadamard States, Calder\'on Projectors and Wick Rotation Near Analytic Cauchy Surfaces.}
  Commun.\ Math.\ Phys.\ \textbf{366}, 29-65 (2019).
 
 \bibitem{gerardYM}
 C.~G\'erard and M.~Wrochna,
 \textit{Hadamard States for the Linearized Yang-Mills Equation on Curved Spacetime.}
  Commun.\ Math.\ Phys.\ \textbf{337}, 253-320 (2015).
 
 \bibitem{Geroch}
R.~Geroch,
\textit{Domain of dependence.}
J.\ Math.\ Phys. \textbf{11} (1970) 437-449.

 \bibitem{GerochBook}
R.~Geroch, 
\textit{Partial Differential Equations of Physics,}
in: G.~S.~Hall (eds.), General Relativity,
Proceedings of the Forty Sixth Scottish Universities Summer School in Physics, Aberdeen, (1995).
 
  \bibitem{Ginoux-Murro-20}
	N.~Ginoux, S.~Murro,
	\textit{On the Cauchy problem for Friedrichs systems on globally hyperbolic manifolds with timelike boundary.}
		arXiv:2007.02544 [math.AP] (2020).
	
\bibitem{DiracMIT}
	N.~Gro\ss e and S.~Murro,
	\textit{The well-posedness of the Cauchy problem for the Dirac operator on globally hyperbolic manifolds with timelike boundary,}
	Documenta Math. \textbf{25}, 737-765  (2020).
	
	\bibitem{ThomasAlex}
T.-P.~Hack and A.~Schenkel, 
  \textit{Linear bosonic and fermionic quantum gauge theories on curved spacetimes.}
Gen. Rel. Grav.   \textbf{45},  877  (2013)
	
	 \bibitem{HW} 
S.~Hollands and R.~M.~Wald,\textit{Existence of local covariant time ordered products of quantum fields in curved spacetime.}   Commun.\ Math.\ Phys.\  {\bf 231}, 309 (2002).

\bibitem{IgorSHS}
I.~Khavkine,
  {\it Characteristics, Conal Geometry and Causality
in Locally Covariant Field Theory.}  arXiv:1211.1914v1 (2012).

\bibitem{IgorValter}
I.~Khavkine and V.~Moretti,
  {\it Algebraic QFT in Curved Spacetime and quasifree Hadamard states: an introduction.}  Advances in Algebraic Quantum Field Theory. Springer International Publishing, (2015).

\bibitem{SpinGeom}
H.~B.~Lawson and M.-L.~Michelsohn,
\textit{Spin geometry.} Vol. 38. 
Princeton university press, (2016).
%
%

%
%
%
\bibitem{propSing}
H.~Sahlmann and R.~Verch, 
\textit{Microlocal spectrum condition and Hadamard form for vector valued quantum fields in curved space-time.} Rev.\ Math.\ Phys.\ \textbf{13}, 1203 (2001).



\bibitem{Micau3}
M.~Wrochna and J.~Zahn, 
\textit{Classical phase space and Hadamard states in the BRST formalism for gauge field theories on curved spacetime.} Rev.\ Math.\ Phys.\ \textbf{29} 1750014, (2017).



\end{thebibliography}
\end{document}